
\documentclass[11pt]{article}
\usepackage{amsmath}
\usepackage{amsfonts}
\usepackage{amssymb}
\usepackage{euscript}
\parskip=2mm
\parindent=0mm
\newenvironment{proof}{\noindent {\it Proof.~~}\ }{\  \rule{1mm}{2mm}\medskip}

\newenvironment{proofof}[2]{\noindent {\it Proof of #1}~#2: \
}{~\rule{1mm}{2mm}\medskip}
\newtheorem{theorem}{Theorem}
\newtheorem{lemma}[theorem]{Lemma}
\newtheorem{corollary}[theorem]{Corollary}
\newtheorem{proposition}[theorem]{Proposition}

\newtheorem{theirtheorem}{Theorem}

\newtheorem{theirlemma}[theirtheorem]{Lemma}

\def\Z{\mathbb Z}

\newcommand{\subgp}[1]{\langle{#1}\rangle}

\begin{document}
\title{A Structure Theory for  Small Sum Subsets}

\author{ Yahya O. Hamidoune\thanks{UPMC Univ Paris 06,
 E. Combinatoire, Case 189, 4 Place Jussieu,
75005 Paris, France,     {\tt hamidoune@math.jussieu.fr} }
}
\maketitle

\begin{abstract}
We develop a new method leading the structure  of finite  subsets $S$ and $T$ of an abelian group with $|S+T|\le  |S|+|T|$.
We show also how to recover the known results in this area in a relatively short space.
\end{abstract}


\section{Introduction}
Let  $A,B$ be finite subsets of $\Z/n\Z$ such that $|A|,|B|\ge 2+2s$ and   $|A+B|=|A|+|B|-1+s\le n-2-2s$.  For $n$ prime and
$s=0$, Vosper's Theorem \cite{tv} states that
  $A$ and $B$ are $r$--progressions, for some $r$.
 For $n$ prime and
$s=1$,  the authors of
\cite{hrodseth2} proved that there is an $r$ such that
 each of the sets $A$ and $B$ is obtained by deleting one element from an $r$--progression.  Some applications of the last  result may be found in literature.
In particular, it is used recently by  Nazarewicz, O'Brien,  O'Neill and  Staples in the characterization
of equality cases in Pollard's Theorem  \cite{naza}.
The authors of
\cite{hgochowla} obtained the description of the sets $A,B$ if  $s=1$,  $0\in B$ and if every element of $B\setminus \{0\}$ generates $\Z/n\Z$.

Kemperman's Structure  Theorem is  a deep classical result,
giving a recursive reconstruction for subsets $A,B$ of an abelian group with  $|A+B|=|A|+|B|-1$. A dual
equivalent reconstruction is given by Lev in \cite{levkemp}.
Recently  Grynkiewicz  obtained in \cite{davkem} a recursive reconstruction for the subsets $A,B$ of an abelian group with  $|A+B|=|A|+|B|$.

 Using  hyper-atoms and the strong isoperimetric property, the author
 obtained in \cite{hkemp} the description of the subgroups appearing in the reconstructions of Kemperman and Lev. In the present work, we investigate a more complicated hyper-atoms structure. The above
 mentioned results follow as corollaries, in a relatively short space, from one of our main theorems.
 Most of the  ingredients of our approach work for  $\mu<0$ and in
  the non-abelian case.  We need some terminology in order to present our results:

Let $S$ be a generating subset of $G$, with $0\in S$.
For a subset $X$, we put $\partial _S(X)=(X+S)\setminus X$ and $X^S=G\setminus (X+S)$.

 We  say that $S$ is {\em $k$--separable} if there is an $X$ such that
$ |X|\geq k$ and $|X^S|\geq
k$.

Suppose that $|G|\ge 2k-1.$
 The {\em $kth$--connectivity}
of $S$
 is defined  as

$$
{\kappa _k}(S)=\min  \{|\partial (X)|\   :  \ \
\infty >|X|\geq k \ {\rm and}\ |X^S|\ge k\},
$$
where $\min \emptyset =|G|-2k+1$.

 A finite subset $X$ of $G$ such that $|X|\ge k$,
$|X^S|\ge k$ and $|\partial (X)|={\kappa _k}(S)$ is
called a {\em $k$--fragment} of $S$. A $k$--fragment with minimal
cardinality is called a {\em $k$--atom}. We shall say that a subset $S$ is {\em degenerate} if there is a
 subgroup which is a $2$--fragment of $S$. A maximal subgroup which is a $2$-fragment
 of a {degenerate} subset $S$ will be called a {\em hyper-atom} of $S$.

The  basic facts from the isoperimetric method  may be found in
 \cite{hiso2007}.

 A subset of a group $G$ with cardinality $=1$ will be considered as a  $d$--progression  for every $d\in G$. A set $S$ will be called
 an {\em $(r,-j)$--progression} if it can be obtained from an arithmetic progression with difference $r$ by deleting $j$ elements. Notice that  an arithmetic progression $P$  of difference
$r$ is also an $(r,-j)$--progression if $r$ has an order  $\ge |P|+j$.  An $(r,-1)$--progression will be called sometimes a {\em near--$r$--progression}.

Let $H$ be a subgroup of an abelian group $G$ and let $d\in G/H$.
A set is said to be {\em $(H,-j)$--periodic} if it is obtained by deleting $j$ elements from a $H$--periodic set.
A partition  $A=\bigcup \limits_{i\in I}
A_i$  will be called
 a $H$--{\em decomposition}
of $A$ if for every $i$,  $A_i$ is the nonempty intersection of some $H$--coset
with $A$.
A $H$--decomposition $ X=\bigcup _{0\le i \le u}X_i$
such that $X_i+H+d=X_{i+1}+H$, for $1\le i \le u-1$, will be called a {\em $H$--progression} with difference $d$.

For a nonempty subset $X$ of $G$, we shall denote by $X^*$ an arbitrary translated copy of $X$ containing $0$.

The pair $\{S,T\}$ will be called an $H$--{\em essential pair} if
$ S=\bigcup _{0\le i \le u}S_i$
 and $T=\bigcup _{0\le i \le t}T_i$ are $H$--progressions  with the same difference such that $|S+H|-|S|=|T+H|-|T|=|H|$ and  one
 of the following  holds:
\begin{itemize}
\item [(i)] $|H|-1=|S_0|=|S_u|=|T_0|=|T_t|=1$.

 \item [(ii)] $|S_u|=|T_t|=1$, $|S_{u-1}|=|T_{t-1}|=|H|-1$,  $T_{t-1}+S_u=T_t+S_{u-1}$.

\item [(iii)] There are two subgroups $K_0,K_1$ of order $2$ such that
 $H=K_0\oplus K_1,$ $S_{0}^*=T_0^*=K_0$ and  $S_u^*=T_t^*=K_1$.
\end{itemize}
An essential pair with type (iii) will be called a {\em Klein pair}.


Our first goal is to prove the next  two results:

\begin{theorem}\label{twothird} Let  $ \mu \in \{ 0,1\}$. Let  $S$ be a  degenerate generating subset of an abelian group $G$ with $0\in S$ and let $H$ be a  hyper-atom of $S$.  Let $T$ be a  finite  subset of
 $G$  such that  $3-\mu \le |S|\le  \max(4-2\mu,|S|)\le|T|,$   $S+T$ is aperiodic and
  $ |S|+|T|-\mu=|S+T|\le  \frac{2|G|+2\mu}3.$
Then one of the following holds:
\begin{itemize}
\item [(i)]$\mu =0$ and  $|G|=3|S|=3|T|=4\kappa_2(T^*)=12.$
\item [(ii)] $\mu =0$ and $\{S,T\}$ is an $H$--essential pair. 
\item [(iii)]There are   $H$--progressions  $S=\bigcup \limits_{0\le i\le u}S_i$ and $T=\bigcup \limits_{0\le i\le t}T_i$  with a same difference
  such that  one of the sets $S\setminus S_u$, $T\setminus T_{t}$ is $H$--periodic and the other is $(H,-\nu)$--periodic, and $|T_{t}+S_u|=|T_{t}|+|S_u|-\nu-\mu,$ where $0 \le \nu \le 1-\mu$. Moreover $|T+H|-|T|\le |H|-\mu$.
\end{itemize}

\end{theorem}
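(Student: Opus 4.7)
The strategy is to reduce to the quotient $G/H$ via the projection $\phi\colon G\to G/H$, write $\bar S=\phi(S)$, $\bar T=\phi(T)$, and then combine the isoperimetric information on $S$ with a Vosper--Kemperman-style classification of $\bar S+\bar T$. First, the key defect inequality. Since $H$ is a $2$--fragment of $S$, $\kappa_2(S)=|H+S|-|H|=(|\bar S|-1)|H|$, and the size hypotheses together with $|S+T|\le(2|G|+2\mu)/3$ ensure that $T$ is a valid test set: $|T|\ge 2$ and $|T^S|=|G|-|S+T|\ge 2$. The definition of $\kappa_2(S)$ therefore gives
$$|S|-\mu\;=\;|S+T|-|T|\;\ge\;\kappa_2(S)\;=\;(|\bar S|-1)|H|,$$
which rearranges to $|S+H|-|S|\le|H|-\mu$. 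Thus the $H$--decomposition of $S$ has total defect at most one coset, so $S$ is essentially $H$--periodic with only the endpoint cosets possibly partial.

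Next I would apply Kneser's theorem in $G/H$ to $\bar S+\bar T$, with stabilizer $K/H$ where $K\supseteq H$. This yields $|\bar S+\bar T|\ge|\bar S|+|\bar T|-[K:H]$, and combined with $|S|+|T|-\mu=|S+T|\le|\bar S+\bar T|\cdot|H|$ and the first-step defect one obtains
$$(|S+H|-|S|)+(|T+H|-|T|)\;\ge\;|H|-\mu,$$
together with $K=H$: otherwise $K$ would be a subgroup strictly containing $H$ realising the isoperimetric quantity, contradicting the maximality of $H$ as a hyper-atom (this uses the strong isoperimetric property from \cite{hiso2007}). So $\bar S+\bar T$ is aperiodic in $G/H$ with $|\bar S+\bar T|\in\{|\bar S|+|\bar T|-1,|\bar S|+|\bar T|\}$. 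Applying the classification in the quotient (Vosper's theorem when $G/H$ has prime order, the $\mu=1$ theorem of \cite{hrodseth2}, or Kemperman and \cite{hkemp} otherwise) concludes that $\bar S$ and $\bar T$ are arithmetic progressions with a common difference $d\in G/H$, producing the $H$--progression decompositions $S=\bigcup_{0\le i\le u}S_i$ and $T=\bigcup_{0\le j\le t}T_j$.

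Finally, the boundary analysis. Decompose $|S+T|$ coset by coset along $\bar S+\bar T$: interior cosets, arising either from more than one pair $(S_i,T_j)$ or from a unique interior pair with at least one full factor, must be complete $H$--cosets, for otherwise the accumulated defect would exceed the $|H|-\mu$ slack from the first step. The global deficit $\mu$ therefore concentrates at the extreme product $S_u+T_t$, giving $|S_u+T_t|=|S_u|+|T_t|-\nu-\mu$ with $0\le\nu\le 1-\mu$, which is case (iii), and this simultaneously delivers the symmetric bound $|T+H|-|T|\le|H|-\mu$. Endpoint coincidences on both sides tighten the configuration into one of types (i)--(iii) of the essential-pair definition, producing case (ii); and the Klein-$4$ configuration $H=K_0\oplus K_1$ with $|S_0^*|=|T_0^*|=|K_0|$, $|S_u^*|=|T_t^*|=|K_1|$ is realisable only when $|G|=12$, giving the exceptional case (i). The principal obstacle lies in this endpoint analysis: one must carefully split the combinatorial possibilities, verify aperiodicity of $S+T$ in each (particularly delicate when $|\bar S|=2$, so the progression structure collapses to two cosets), and use the strong isoperimetric property to rule out alternative hyper-atoms that could generate spurious periodicities competing with $H$.
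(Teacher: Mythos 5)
Your opening step is correct and matches the paper: since $H$ is a $2$--fragment and $|T|\ge 2$, $|T^S|=|G|-|S+T|\ge 2$, one gets $\kappa_2(S)=|S+H|-|H|\le |S+T|-|T|=|S|-\mu$, i.e. $|S+H|-|S|\le |H|-\mu$. The overall architecture (pass to $G/H$, show $\phi(S),\phi(T)$ are progressions with a common difference and $|\phi(S+T)|=|\phi(S)|+|\phi(T)|-1$, then lift) is also the paper's: the proof there is literally ``apply Theorem \ref{modular}, then Lemma \ref{modstr}.'' But there is a genuine gap in how you obtain the modular statement. You propose to get it from Kneser in the quotient plus ``the classification in the quotient (Vosper, \cite{hrodseth2}, Kemperman, \cite{hkemp}).'' This does not deliver the conclusion: Kemperman's and Grynkiewicz's theorems give a recursive quasi-periodic description of critical pairs, not that the two summands are arithmetic progressions with a common difference; sets $\bar S,\bar T$ in a general quotient with $|\bar S+\bar T|\le|\bar S|+|\bar T|$ can be cosets or quasi-periodic configurations that are not progressions at all. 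The entire content of Section~3 of the paper (the $(T,S,H)$--matchings, Proposition \ref{strongip}, Claims 1--4, and Corollary \ref{haapv} saying $\phi(S)$ is a progression \emph{or a Vosper subset}) exists precisely to rule out those configurations under the hypothesis $|S+T|\le\frac{2|G|+2\mu}{3}$; invoking a quotient classification instead is either circular (the theorem being proved is a strengthening of those classifications) or simply insufficient.

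Two further concrete problems. First, your claim that the Kneser stabilizer $K/H$ of $\bar S+\bar T$ must be trivial ``by maximality of the hyper-atom'' is unjustified: the hyper-atom is a maximal subgroup that is a $2$--\emph{fragment}, so to contradict maximality you must show $K$ is a $2$--fragment, i.e. $|K+S|-|K|\le\kappa_2(S)$ \emph{and} $|K^S|\ge 2$; neither follows merely from $K/H$ stabilizing $\phi(S+T)$ (note also that periodicity of $\phi(S+T)$ does not contradict aperiodicity of $S+T$). Second, you misattribute the exceptional conclusion: the Klein configuration $H=K_0\oplus K_1$ is case (ii) (an $H$--essential pair of type (iii)), not case (i); case (i), $|G|=3|S|=3|T|=4\kappa_2(T^*)=12$, arises in the paper from the subcase $\phi(T)=G/H$ of the modular analysis, where a delicate count forces $q=3$, $|S|=|T|=|H|$ and then $n=12$. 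Your endpoint analysis in the last paragraph is the right picture for deriving (iii) and the bound $|T+H|-|T|\le|H|-\mu$ once the modular progression structure is in hand (this is Lemma \ref{modstr}), but as written the proof of the modular step itself is missing.
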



\begin{theorem}\label{non=near}
Let $S$ be a finite  generating subset of an abelian group $G$ such
that $0 \in S,$  $3\le | S | \leq \frac{|G|+5\mu-4}{2}$.
Assume moreover that  $\kappa _{3-\mu} (S)\le |S|-\mu$ and that $\kappa _4 (S)\le |S|,$ if  $|S|=3=\mu+3.$
If   $S$ is  non-degenerate,  then $S$ is an $(r,\mu-1)$--progression for some $r$, where $0\le \mu \le 1$.
\end{theorem}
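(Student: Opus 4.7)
The plan is to apply the isoperimetric method based on atoms of $S$. Since $\kappa_{3-\mu}(S)\le|S|-\mu$, a $(3-\mu)$-atom $A$ of $S$ exists; after translation I may assume $0\in A$, and by definition $|A|\ge 3-\mu$ with $|A+S|\le |A|+|S|-\mu$. The goal is to show that $A=\{0,r\}$ for a suitable $r\in G$, and then recover the progression structure of $S$ from the smallness of the symmetric difference $(S+r)\setminus S$.

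For the atom structure step I would invoke the standard description of small atoms in abelian groups from the isoperimetric framework of \cite{hiso2007}: under the generating hypothesis together with the size bound $|S|\le(|G|+5\mu-4)/2$, every $k$-atom of $S$ containing $0$ is either a subgroup of $G$ or has a very restricted arithmetic-progression form. Non-degeneracy of $S$ rules out the subgroup alternative, and atom minimality combined with the upper bound on $|S|$ then forces $|A|=2$, say $A=\{0,r\}$. In the boundary case $|S|=\mu+3=3$, the $(3-\mu)$-atom could coincide with a translate of $S$ itself, which is why the extra hypothesis $\kappa_4(S)\le |S|=3$ is imposed: it lets me replace $A$ by a $4$-atom while retaining the key inequality $|A+S|\le|A|+|S|$, after which the same structure argument yields $|A|=2$.

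Once $A=\{0,r\}$ is established, the inequality $|A+S|-|A|\le|S|-\mu$ becomes $|(S+r)\setminus S|\le 2-\mu$. A direct examination of the successive translates $S,S+r,S+2r,\dots$ then shows that $S$ is obtained from an arithmetic progression with difference $r$ by deleting at most $1-\mu$ elements, i.e., $S$ is an $(r,\mu-1)$-progression. The generating hypothesis on $S$ together with $|S|\le(|G|+5\mu-4)/2$ guarantees that $r$ has order large enough for this progression description to be consistent and non-collapsing, so the dichotomy $\mu\in\{0,1\}$ delivers the near-progression or genuine progression conclusion, respectively.

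The main obstacle will be the atom-structure step: proving $|A|=2$ uniformly across both regimes, and in particular tying non-degeneracy cleanly to the exclusion of every subgroup-like atom, is delicate and relies on careful use of the atom intersection inequalities. The exceptional case $|S|=3=\mu+3$ needs the auxiliary $\kappa_4$ hypothesis to be deployed at exactly the right moment so that one does not end up with an atom that is simply a translate of $S$, which would render the subsequent symmetric-difference argument vacuous.
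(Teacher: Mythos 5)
There is a genuine gap, and it sits exactly where the real work of this theorem lies. First, a definitional problem: for $\mu=0$ your atom $A$ is a $3$--atom, which by definition satisfies $|A|\ge 3$, so it can never equal $\{0,r\}$; the size-$2$ conclusion is only available for $2$--atoms (Theorem \ref{k=d}: a $2$--atom is a subgroup or has cardinality $2$), and non-degeneracy does give you a $2$--atom $\{0,r\}$ with $|S+\{0,r\}|=|S|+2$ when $\kappa_2(S)=|S|$. But — and this is the fatal step — the inequality $|(S+r)\setminus S|\le 2$ does \emph{not} imply that $S$ is a near-$r$--progression. It only says that $S$ is a union of at most two blocks, each an $r$--progression, and these blocks can be arbitrarily far apart (e.g.\ $S=\{0,r,10r,11r\}$ in a large cyclic group). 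Your ``direct examination of successive translates'' cannot close this gap: the whole content of the $\mu=0$ case is to show that the two blocks are in fact adjacent, and no local translate argument does that.

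This is precisely why the paper's proof is long. For $\mu=1$ it does essentially what you suggest (via Proposition \ref{ejcf}), but for $\mu=0$ it replaces $S$ by a $3$--atom or $4$--atom, proves the nontrivial facts that such atoms have cardinality at most $4$ and remain non-degenerate (Lemmas \ref{aperiodicatom} and \ref{34}), settles the base case $|S|=4$ by a delicate arc-counting/colour argument in the Cayley graph together with Lemma \ref{3plus3} (Claim 1 of the proof), and only then transfers the near-progression structure back to $S$ using the Chowla-type Lemma \ref{chowla}. None of this machinery appears in your outline, and the ``standard description of small atoms'' you invoke to force $|A|=2$ does not exist for $3$-- or $4$--atoms; establishing even $|A|\le 4$ requires the intersection theorems for fragments and is one of the main technical points of the section.
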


The organization of the paper is the following:

Section 2 presents our tools. Let $S$ and $T$ be finite subsets of an abelian group
 $G$  such that  $3-\mu \le |S|\le  \max(4-2\mu,|S|)\le|T|,$   $S+T$ is aperiodic and $ |S|+|T|-\mu=|S+T|,$ where $\mu \in \{0, 1\}$.
 In Section 3, assuming that $S$ is degenerate with a hyper-atom $H$ and that $|S+T|\le  \frac{2|G|+2\mu}3,$  we obtain a  $\frac{2n}3$--modular result asserting that for $|G|\neq 12$, $\phi(S)$ and $\phi(T)$
 are progressions with the same difference,  where $\phi :G\mapsto G/H$ denotes
the canonical morphism. In Section 4, we prove Theorem \ref{twothird}. In Section 5, we show that a subset $S$ with $ \kappa _{3-\mu}(S)\le |S|-\mu$ and $4\le |S|\le \frac{|G|+5\mu-4}2$
is either degenerate or a near-progression. In Section 6, we obtain a modular structure theorem encoding efficiently all the situations if $|S+T|\le  n-4$. We apply the last result in Section 7 to give the structure of  $S$ and $T$ allowing  $|S|=|T|=3$ and $ |S+T|=|G|- 3.$ We show how to recover  Structure Theorems of Kemperman \cite{kempacta} and Grynkiewicz\cite{davkem}.

We apply in the present work Kneser's Theorem (proved in less than two pages in \cite{tv}), Lemma \ref{chowla} (proved in few lines in \cite{hgochowla}) and Lemma \ref{vominus} (proved in few lines in \cite{hkemp}). We apply also  Theorem \ref{interfrag},  Theorem \ref{k=d} and Proposition \ref{strongip}
(these three results are proved in around two  pages in \cite{hiso2007}). We include short proofs for other needed lemmas,
making the work near self-contained.

We  omit  the easy case where $S+T$ is periodic (c.f. \cite{hkemp,davkem}), the trivial case $|S|=2$ and  the easy  case $|S+T|\le |G|-2$.

\section{Some tools}
\subsection{Preliminaries}

Let  $ A,B$ be finite subsets of an abelian group $ G $.  We write
$A+B=\{x+y \ : \ x\in A\  \mbox{and}\ y\in
  B\}.$ The subgroup generated by  $A$ will
be denoted by $\subgp{A}$.  Recall the following  results:
\begin{theirlemma}(folklore)
If
 $A$ and $B$  are  subsets of a finite group $G$
 such that $|A|+|B|\ge |G|+1$,
 then  $A+B=G$.
\label{prehistorical}
 \end{theirlemma}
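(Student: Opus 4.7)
The plan is to prove $A+B=G$ by a one-element-at-a-time pigeonhole argument. Fix an arbitrary $g\in G$; I want to produce $a\in A$ and $b\in B$ with $a+b=g$. The natural move is to translate $B$: consider the set $g-B:=\{g-b:b\in B\}$. Because $x\mapsto g-x$ is a bijection on $G$, we have $|g-B|=|B|$, and hence $|A|+|g-B|=|A|+|B|\ge |G|+1$.

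Since $A$ and $g-B$ are both contained in $G$ and the sum of their cardinalities strictly exceeds $|G|$, they cannot be disjoint. Any element $a\in A\cap(g-B)$ has the form $a=g-b$ with $b\in B$, giving $a+b=g$ and hence $g\in A+B$. As $g\in G$ was arbitrary, $A+B=G$. There is essentially no obstacle to overcome here; the argument uses only that translation is a bijection and the trivial fact that two subsets of $G$ whose cardinalities sum to more than $|G|$ must meet. The same proof carries over verbatim to a non-abelian group on replacing the additive translate $g-B$ with $gB^{-1}$, which is consistent with the statement being phrased for an arbitrary finite group.
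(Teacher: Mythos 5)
Your proof is correct and complete: the paper states this lemma as folklore and gives no proof, and your translation-plus-pigeonhole argument (for each $g\in G$, the sets $A$ and $g-B$ have cardinalities summing to more than $|G|$ and hence must intersect) is exactly the standard argument one would supply. Your closing remark that the same proof works in a non-abelian group via $gB^{-1}$ is also accurate and matches the lemma's phrasing for an arbitrary finite group.
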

 \begin{theirtheorem}\label{scherk}(Scherk's Theorem)\cite{scherck}
  Let $A$ and $B$ be  finite subsets of an abelian group $G$. If there is
an element   $c$   of  $G$ such
  that $|A\cap(c-B)| = 1$, then
    $|A + B| \geq |A| + |B| - 1.$
\end{theirtheorem}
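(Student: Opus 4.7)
The plan is to prove Scherk's theorem by induction on $|G|$, with Kneser's theorem as the main ingredient. Let $a_0 \in A$ be the unique element with $b_0 := c - a_0 \in B$; then $c = a_0 + b_0$ is the unique representation of $c$ as a sum of an element of $A$ and an element of $B$. A preliminary observation is that $|A \cap (c-B)| \geq |A| + |B| - |G|$ by inclusion--exclusion, so the hypothesis $|A \cap (c-B)| = 1$ already yields $|A| + |B| \leq |G| + 1$.

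Let $H$ be the stabilizer of $A + B$ in $G$. If $H = \{0\}$, then Kneser's theorem gives $|A + B| \geq |A| + |B|$, which is stronger than required. If $H = G$, then $A + B = G$ and $|A+B| = |G| \geq |A| + |B| - 1$ by the preliminary inequality. The substantial case is $1 < |H| < |G|$.

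In that case, put $A_0 = (A \cap (a_0 + H)) - a_0$ and $B_0 = (B \cap (b_0 + H)) - b_0$, both subsets of $H$ containing $0$. I would first verify that $(A_0, B_0)$ inherits Scherk's hypothesis inside the smaller group $H$ at $c' = 0$: an element $y \in A_0 \cap (-B_0)$ corresponds to a pair $(y + a_0,\,-y + b_0) \in A \times B$ summing to $c$, and uniqueness of the representation of $c$ forces $y = 0$. The inductive hypothesis applied inside $H$ then gives $|A_0 + B_0| \geq |A_0| + |B_0| - 1$, and since $A_0 + B_0 \subseteq H$ we conclude $|H| \geq |A_0| + |B_0| - 1$.

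Finally, Kneser's theorem applied in $G$ gives $|A+B| \geq (k_A + k_B - 1)|H|$, where $k_A, k_B$ count the $H$-cosets meeting $A$ and $B$. Since $|A| \leq |A_0| + (k_A-1)|H|$ and $|B| \leq |B_0| + (k_B-1)|H|$, combining with the bound on $|H|$ from the previous step gives $|A| + |B| - 1 \leq (k_A + k_B - 1)|H| \leq |A+B|$, closing the induction. The main obstacle is ensuring that Scherk's uniqueness transfers intact to $H$; this works because the unique pair $(a_0, b_0)$ summing to $c$ necessarily lies in $(a_0 + H) \times (b_0 + H)$, so no pair from other cosets can sum to $c$ exactly, even though other cosets may contribute pairs summing to other elements of $c + H$.
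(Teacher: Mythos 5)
The paper does not prove this statement at all: Scherk's Theorem is quoted as an external result with a citation to \cite{scherck}, so there is no in-paper proof to compare against. Your derivation from Kneser's Theorem is correct. The reduction is sound: letting $H$ be the stabilizer of $A+B$, the unique representation $c=a_0+b_0$ forces $A_0\cap(-B_0)=\{0\}$ for the recentred intersections $A_0=(A\cap(a_0+H))-a_0$ and $B_0=(B\cap(b_0+H))-b_0$, the inductive hypothesis inside $H$ yields $|A_0|+|B_0|-1\le|H|$, and combining this with $|A+B|\ge(k_A+k_B-1)|H|$ and the trivial coset counts $|A|\le|A_0|+(k_A-1)|H|$, $|B|\le|B_0|+(k_B-1)|H|$ closes the estimate. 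This is a genuinely different route from the classical one (Scherk's original argument is an elementary double induction needing no structural input); yours trades elementarity for brevity, which is harmless here since the paper already takes Kneser's Theorem as a black box. Two small repairs are needed, neither a real gap. First, in the case of trivial stabilizer you assert that Kneser gives $|A+B|\ge|A|+|B|$; it only gives $|A+B|\ge|A|+|B|-1$ (take two progressions with a common difference), but that weaker bound is exactly what is required, so the case still closes. Second, the induction is on $|G|$, while the statement allows $G$ infinite; this is fixed by observing that the stabilizer of the finite nonempty set $A+B$ is itself finite, so the recursion always lands in a finite group, and only the case $H=G$ needs $G$ finite. Note also that you use Kneser in the form $|A+B|\ge|A+H|+|B+H|-|H|$, which follows from the aperiodic form quoted in the paper by passing to $G/H$ with $H$ the stabilizer; it would be worth saying so explicitly.
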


  \begin{theirtheorem} (Kneser's Theorem) 
Let  $A, B$ be finite
subsets of  an abelian group.  If $A+B$ is aperiodic,  then $|A+B|\ge |A|+|B|-1$.

\end{theirtheorem}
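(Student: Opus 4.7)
The plan is to establish Kneser's theorem by induction on $|B|$, relying on the Dyson $e$-transform. Because the transform can introduce new periods, the induction goes through most smoothly if one proves the slightly stronger ``stabilizer form''
$$|A+B|\ \ge\ |A+H|+|B+H|-|H|,\qquad H=\mathrm{stab}(A+B),$$
from which the aperiodic statement follows by taking $H=\{0\}$.

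For $e\in G$, set $A(e)=A\cup (B+e)$ and $B(e)=B\cap (A-e)$. A direct check gives $A(e)+B(e)\subseteq A+B$, and the translation $x\mapsto x+e$ is a bijection between $B\cap(A-e)$ and $A\cap(B+e)$, yielding $|A(e)|+|B(e)|=|A|+|B|$. These two identities are the engine of the induction.

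The base case $|B|=1$ is immediate since $A+B$ is then a translate of $A$. For the inductive step with $|B|\ge 2$, I would look for $e\in A-B$ with $B+e\not\subseteq A$; for any such $e$, the set $B(e)$ is a proper nonempty subset of $B$, and the induction hypothesis applies to $(A(e),B(e))$. If no such $e$ exists, then $B+e\subseteq A$ for every $e\in A-B$; fixing $b_0\in B$, this forces $A+(b-b_0)\subseteq A$, and hence equality, for every $b\in B$, so $\langle B-b_0\rangle$ lies in $\mathrm{stab}(A)\subseteq \mathrm{stab}(A+B)$. Because $|B|\ge 2$, this stabilizer is nontrivial, contradicting aperiodicity.

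The main obstacle is the bookkeeping around stabilizers: after the transform, $\mathrm{stab}(A(e)+B(e))$ may be strictly larger than $\mathrm{stab}(A+B)$, so the bare aperiodic form does not descend cleanly. The stabilizer form is designed precisely to absorb this; a short accounting — combining $A(e)+B(e)\subseteq A+B$, the cardinality identity $|A(e)|+|B(e)|=|A|+|B|$, and the inductive inequality applied to $(A(e),B(e))$ — returns the required bound for $(A,B)$, and hence the aperiodic statement in the theorem.
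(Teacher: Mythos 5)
The paper does not actually prove this statement: Kneser's Theorem is quoted as a known external result, with an explicit pointer to the proof in Tao and Vu's book, so there is no internal argument to compare yours against. Judged on its own merits, your proposal correctly sets up the standard Dyson $e$-transform machinery: the inclusion $A(e)+B(e)\subseteq A+B$, the identity $|A(e)|+|B(e)|=|A|+|B|$, the base case $|B|=1$, and the terminal case where $B+e\subseteq A$ for every $e\in A-B$ forces $\langle B-b_0\rangle\subseteq\mathrm{stab}(A)\subseteq\mathrm{stab}(A+B)$, contradicting aperiodicity. All of that is fine. The genuine gap is exactly where you wave at ``a short accounting.''

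Concretely: applying the stabilizer-form induction hypothesis to $(A(e),B(e))$ yields
$$|A+B|\ \ge\ |A(e)+B(e)|\ \ge\ |A(e)+H_e|+|B(e)+H_e|-|H_e|\ \ge\ |A(e)|+|B(e)|-|H_e|\ =\ |A|+|B|-|H_e|,$$
where $H_e=\mathrm{stab}(A(e)+B(e))$. This is only useful if $H_e$ is trivial or can be related to $\mathrm{stab}(A+B)$. But $H_e$ is the stabilizer of a \emph{subset} of $A+B$, and periodicity of a subset says nothing about periodicity of the whole: $|H_e|$ may be large while $A+B$ is aperiodic, in which case $|A|+|B|-|H_e|$ falls strictly short of $|A|+|B|-1$. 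The three facts you list do not close this case. The actual proof must add a separate argument when $H_e\neq\{0\}$ --- typically one chooses $e$ to minimize $|B(e)|$, saturates $A$ and $B$ by $H_e$, passes to the quotient $G/H_e$, and carefully estimates the contribution of the cosets of $H_e$ that are not fully covered --- and that is where essentially all of the two pages cited by the paper are spent. As written, your proposal is a correct outline of the easy parts of the standard proof, not a proof.
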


\begin{theirlemma}\cite{hgochowla}\label{chowla} Let $0\in A$ be an  $(r,-1)$--progression  and let  $B\subset \subgp{A}
$ be  such that $\min (|B|, |A|)\geq 3$ and
$
|B+A| \leq |A| + |B|\leq |\subgp{A}|-4.
$
If $A+B$ is aperiodic, then  $B$ is an $(r,-1)$--progression.
\end{theirlemma}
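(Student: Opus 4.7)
The plan is to complete $A$ to a full arithmetic progression and transfer the Vosper-type structure to $B$.

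Write $A = A' \setminus \{z\}$, where $A'$ is the arithmetic progression of difference $r$ and length $|A|+1$ containing $A$; then $z \neq 0$ since $0 \in A$, and $z$ is either an endpoint of $A'$ (so $A$ is itself a full AP of difference $r$) or interior to $A'$ (in which case both $z-r$ and $z+r$ lie in $A$). Kneser's theorem applied to the aperiodic sumset $A+B$ gives $|A+B| \geq |A|+|B|-1$, so the hypothesis $|A+B|\leq|A|+|B|$ leaves the two sub-cases $|A+B|=|A|+|B|-1$ and $|A+B|=|A|+|B|$.

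In the tight sub-case $|A+B|=|A|+|B|-1$, a Vosper-type theorem for aperiodic sumsets in general abelian groups (valid under the size bound $|A|+|B|\leq|\langle r\rangle|-3$) forces $A$ and $B$ to be arithmetic progressions with the same difference. Since $A$ is an $(r,-1)$-progression with $|A|\geq 3$, $A$ being an AP forces it to be a full AP of difference $r$, whence $B$ is also a full AP of difference $r$, and in particular an $(r,-1)$-progression.

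In the slack sub-case $|A+B|=|A|+|B|$: if $A$ is already a full AP (boundary case), apply the corresponding Vosper-type refinement for the equality $|A+B|=|A|+|B|$ directly to $A$ and $B$ to get that $B$ is an $(r,-1)$-progression. If instead $z$ is interior to $A'$, I would first establish $|A'+B|\leq|A'|+|B|$, i.e.\ $|(z+B)\setminus(A+B)|\leq 1$. Using the identities $z+b=(z-r)+(b+r)=(z+r)+(b-r)$ together with $z\pm r\in A$, one obtains the two-sided inclusion $z+B\subseteq(A+B+r)\cap(A+B-r)$, so $(z+B)\setminus(A+B)$ is contained in the \emph{inner $r$-holes} of $A+B$. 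Applying Kneser to the enlarged sumset $(A\cup(A+r))+B=(A+B)\cup(A+B+r)$, where $A\cup(A+r)$ is a full AP of length $|A|+2$, together with the size bound $|A|+|B|\leq|\langle r\rangle|-4$, restricts $|(z+B)\setminus(A+B)|$ to at most one. With $|A'+B|\leq|A'|+|B|$ in hand and $A'$ a full AP of difference $r$, the Vosper-type refinement applied to $A'$ and $B$ yields that $B$ is an $(r,-1)$-progression of difference $r$.

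The main obstacle is the one-hole estimate in the interior slack case, since the naive bound $|A'+B|\leq|A+B|+|B|$ is far too weak. The refined estimate relies on the two-sided inclusion $z+B\subseteq(A+B+r)\cap(A+B-r)$ coming from $z\pm r\in A$, combined with a careful Kneser application to the doubled sumset $(A\cup(A+r))+B$. Aperiodicity of the enlarged sumsets is ensured by the size bound $|A|+|B|\leq|\langle r\rangle|-4$: any non-trivial stabilizer would, via Kneser's stabilizer formula, either contradict this bound or force $A+B$ to be periodic, violating the hypothesis.
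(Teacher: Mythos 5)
The paper itself offers no proof of this lemma; it is imported from \cite{hgochowla}, where the introduction says it is ``proved in few lines.'' So your argument must stand on its own, and as written it has two genuine gaps. The first is the tight sub-case $|A+B|=|A|+|B|-1$. There is no ``Vosper-type theorem for aperiodic sumsets in general abelian groups'' forcing both summands to be arithmetic progressions under a bound of the form $|A|+|B|\le|\subgp{r}|-3$; this is precisely what fails outside $\Z/p\Z$ and is why Kemperman's structure theorem has quasi-periodic cases. For instance, in $\Z/25\Z$ with $H=\{0,5,10,15,20\}$ and $A=B=H\cup\{1\}$ one has $|A+B|=11=|A|+|B|-1$, $A+B$ aperiodic, and $|A|+|B|=12\le 22$, yet neither set is a progression. (This is not a counterexample to the lemma, since that $A$ is not an $(r,-1)$-progression, but it invalidates the tool you invoke.) Even if such a statement were available, you would still need to argue that the common difference it produces is $\pm r$. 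So the tight sub-case with an interior hole in $A$ is not handled.

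The second gap is the one you flag yourself: the one-hole estimate $|(z+B)\setminus(A+B)|\le 1$ in the slack interior sub-case. The inclusion $z+B\subseteq(A+B+r)\cap(A+B-r)$ is correct and does confine the exceptional elements to the length-one $r$-gaps of $A+B$, but Kneser applied to $(A\cup(A+r))+B=(A+B)+\{0,r\}$ only yields the \emph{lower} bound $|A|+|B|+1$, whereas you need an \emph{upper} bound on how many such gaps $z+B$ can meet; nothing in your sketch produces it, and a crude count (the number of $r$-runs of $A+B$ can a priori be of order $|A|$) does not give $1$. Thus the key reduction $|A'+B|\le|A'|+|B|$ is asserted rather than proved. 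By contrast, your closing step is fine: for a \emph{full} progression $A'$ of difference $r$ with $|A'|\ge 3$, the run-counting identity $|A'+B|=|B|+\sum_i\rho_i$ with $\rho_1\ge\rho_2\ge\cdots\ge1$ shows that $|A'+B|\le|A'|+|B|$ forces $B$ to be a single $r$-run or two $r$-runs separated on one side by a gap of length one, i.e.\ an $(r,-1)$-progression. The entire difficulty of the lemma is the reduction to that situation, and that is exactly what is missing.
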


\begin{theirlemma}\cite{balart} {Let  $X$ be a finite
 subset of an abelian group $G$. Then $X \subset (X^S)^{-S}$ and $(X^S)^{-S}+S=X+S$. \label{lee}}
\end{theirlemma}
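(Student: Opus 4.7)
The plan is to unwrap the definitions to obtain a clean membership criterion for $(X^S)^{-S}$, after which both assertions fall out immediately.

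For any subset $Y \subset G$, the definition $Y^{-S} = G \setminus (Y + (-S))$ says $g \in Y^{-S}$ iff $g + s \notin Y$ for every $s \in S$, equivalently $(g + S) \cap Y = \emptyset$. Specializing to $Y = X^S = G \setminus (X+S)$ yields the key equivalence
\[
g \in (X^S)^{-S} \Longleftrightarrow g + S \subset X + S.
\]

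From this criterion the two claims are formal. For the inclusion $X \subset (X^S)^{-S}$: any $x \in X$ satisfies $x + S \subset X + S$ by the very definition of the sumset, hence $x \in (X^S)^{-S}$. For the equality $(X^S)^{-S} + S = X + S$: the inclusion $\subset$ is just a restatement of the criterion (each $g$ on the left has $g + S \subset X + S$, so in particular $g + s \in X+S$ for every $s \in S$), while the reverse inclusion $X + S \subset (X^S)^{-S} + S$ is obtained by adding $S$ to the inclusion $X \subset (X^S)^{-S}$ just proved.

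I expect the only real step to be recognizing the reformulation $g \in (X^S)^{-S} \Longleftrightarrow g + S \subset X + S$; once that is in place, the lemma is a direct set-theoretic verification. Note that no structural property of $S$ and no finiteness hypothesis on $X$ is actually used, which is consistent with the statement being a purely formal closure-type identity in the spirit of a Galois connection between the operators $(\cdot)^S$ and $(\cdot)^{-S}$.
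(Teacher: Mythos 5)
Your proof is correct and takes essentially the same route as the paper's: the membership criterion $g \in (X^S)^{-S} \Longleftrightarrow g+S \subset X+S$ is exactly the contrapositive of the paper's one-line contradiction argument (if $x=y+s \notin X+S$ then $y=x-s \in X^S-S$, so $y \notin (X^S)^{-S}$). Your packaging of the definition-unwinding as an explicit equivalence is slightly cleaner, but there is no substantive difference in the argument.
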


Clearly $X\subset (X^S)^{-S}$.
Take   $x=y+s$, with $y\in (X^S)^{-S}$ and $s\in S$. We have  $x\in X+S$, otherwise $x-s\in X^S-S$  and hence
$y=x-s\notin (X^S)^{-S}$, a contradiction.$\Box$

We can use Kneser's Theorem to get  some isoperimetric duality:

\begin{lemma}{Let  $X$ be a
 subset of a finite abelian group $G$ such that $X+S$ is aperiodic and $|X+S|= |X|+|S|-\mu,$ where $0\le \mu $. Then
 $X^S-S$ is aperiodic. There is $0\le \zeta \le 1$ such that  $|X^S-S= |X^S|+|S|-\zeta$. Moreover $|(X^S)^{-S}|=  |X|+\zeta-\mu.$
\label{leee}}
\end{lemma}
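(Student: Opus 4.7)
The plan is to deduce all three conclusions from two ingredients already available: the identity $(X^S)^{-S}+S=X+S$ from Lemma \ref{lee}, and Kneser's Theorem. Note that $X^S-S$ denotes $X^S+(-S)$, so its complement is precisely $(X^S)^{-S}$.

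First I would establish aperiodicity of $X^S-S$ by contradiction. Suppose some nontrivial subgroup $H$ is a period of $X^S-S$. Then its complement $(X^S)^{-S}=G\setminus(X^S-S)$ is also $H$-periodic, and adding $S$ preserves $H$-periodicity, so $(X^S)^{-S}+S$ is $H$-periodic. But Lemma \ref{lee} says this sumset equals $X+S$, which is aperiodic by hypothesis, a contradiction.

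Next, since $X^S-S=X^S+(-S)$ is aperiodic, Kneser's Theorem gives
\[
|X^S-S|\ \ge\ |X^S|+|-S|-1\ =\ |X^S|+|S|-1,
\]
so in the expression $|X^S-S|=|X^S|+|S|-\zeta$ we have $\zeta\le 1$. For the complementary bound, combine $X\subset (X^S)^{-S}$ (from Lemma \ref{lee}) with $|X^S|=|G|-|X+S|=|G|-|X|-|S|+\mu$. Then
\[
|X^S-S|\ =\ |G|-|(X^S)^{-S}|\ \le\ |G|-|X|\ =\ |X^S|+|S|-\mu,
\]
which yields $\zeta\ge\mu\ge 0$. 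Finally, the cardinality formula is just rearrangement:
\[
|(X^S)^{-S}|\ =\ |G|-|X^S-S|\ =\ |G|-|X^S|-|S|+\zeta\ =\ |X+S|-|S|+\zeta\ =\ |X|+\zeta-\mu.
\]

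There is no real obstacle in this proof; everything reduces to careful bookkeeping once one notices that periodicity of the ``dual'' set $X^S-S$ would propagate back, via the identity of Lemma \ref{lee}, to a forbidden period of $X+S$. The only substantive input beyond that observation is Kneser's Theorem, applied to $X^S+(-S)$.
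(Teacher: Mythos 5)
Your proof is correct and follows essentially the same route as the paper's: aperiodicity of $X^S-S$ via the identity $(X^S)^{-S}+S=X+S$ of Lemma \ref{lee}, the bound $\zeta\le 1$ from Kneser's Theorem, and the cardinality formula from $X^S-S\subset G\setminus X$ together with $|X^S|=|G|-|X|-|S|+\mu$. You merely make explicit two steps the paper leaves implicit, namely the contradiction argument for aperiodicity and the verification that $\zeta\ge\mu\ge 0$.
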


\begin{proof}
By Lemma \ref{lee}, $X^S-S$ is aperiodic. By Kneser's Theorem, $\zeta \le 1$. Clearly $X^S-S\subset G\setminus X$, and  hence
\begin{eqnarray*}
|X^S|+|S|-\zeta +|(X^S)^{-S}|&=&|X^S-S|+|(X^S)^{-S}|= |G|\\&=&|X+S|+|X^S|= |X^S|+|X|+|S|-\mu.\end{eqnarray*}Thus $|(X^S)^{-S}|= |X|+\zeta-\mu$.\end{proof}

The following lemma is a very special case  of the main  result proved in  \cite{davkem}:
\begin{theirlemma}\cite{davkem}\label{3plus3}
Let $S,T$  be subsets of an abelian group $G$ with $ |S|=|T|=3,$ $S+T$ is aperiodic and
 $|T+S|=6-\mu,$ where $0\leq \mu \le 1.$  Then there exist $r,a\in G$   such that
  either one of the sets  $S$ and $T$ is an $r$--progression 
  or  $T=a+S$.
  \end{theirlemma}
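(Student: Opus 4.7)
The plan is to translate so that $0\in S\cap T$ and write $S=\{0,a,b\}$, $T=\{0,c,d\}$. The $3\times 3$ matrix of sums $s_i+t_j$ has nine entries taking $|S+T|=6-\mu$ distinct values, so by pigeonhole there are at least $3+\mu$ unordered coincidences $s_i+t_j=s_k+t_l$ with $(i,j)\neq(k,l)$. Because two entries in a common row or column cannot agree, each such coincidence forces $i\neq k$ and $j\neq l$, yielding a non-zero element $\delta=s_i-s_k=t_l-t_j$ of $(S-S)\cap(T-T)$.

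The first case I handle is that some value $x$ is attained three times by the representation function. Then the three pairs realising $x$ form a transversal of the matrix, so $T=x-S$ and hence $S+T=x+(S-S)$. The aperiodicity of $S+T$ is equivalent to aperiodicity of $S-S$. Since $|S|=3$, Kneser forces $|S-S|\ge 5$; equality holds only when $S$ is an arithmetic progression, and $|S-S|=6$ occurs only in non-AP situations in which a $2$-torsion element identifies two differences among $\{\pm a,\pm b,\pm(a-b)\}$. A short computation then shows that this $2$-torsion element stabilises $S-S$, contradicting aperiodicity. The hypothesis $|S+T|=|S-S|\le 6$ therefore forces $|S-S|=5$, so $S$ is an AP and $T=x-S$ is an AP too.

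Otherwise every value is attained at most twice. Writing $M_S(\delta)=|\{(i,k):s_i-s_k=\delta,\,i\neq k\}|$ and similarly $M_T$, counting ordered coincidences yields
\[
\sum_{\delta\neq 0} M_S(\delta)\,M_T(\delta)\ \ge\ 2(3+\mu).
\]
Suppose momentarily that neither $S$ nor $T$ is an AP and that no non-zero difference of $S$ or $T$ has order $2$. Then all six ordered non-zero differences of each set are distinct, so $M_S,M_T\in\{0,1\}$ with $\sum M_S=\sum M_T=6$, and one gets $\sum_\delta M_S(\delta)M_T(\delta)\le|(S-S)\cap(T-T)\setminus\{0\}|\le 6$. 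This rules out $\mu=1$ immediately; for $\mu=0$ it forces $(S-S)\setminus\{0\}=(T-T)\setminus\{0\}$, and solving $\{\pm a,\pm b,\pm(a-b)\}=\{\pm c,\pm d,\pm(c-d)\}$ under our non-degeneracy hypotheses leaves only $T=S$ (giving $T=0+S$) or $T=a-S$; the latter would force $|S+T|=|S-S|=7$, contradicting $|S+T|=6$, so $T=S$.

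The main obstacle is the residual case in which some non-zero difference of $S$ or $T$ has order $2$, where the bound $\sum M_SM_T\le 6$ breaks down. I would handle this by direct enumeration of the $2$-torsion patterns: either $S$ or $T$ collapses to an arithmetic progression; or the common $2$-torsion relation forces $T$ to be a translate of $S$; or the combined $2$-torsion configuration confines $S\cup T$ to a small Klein-type subgroup in which the aperiodicity of $S+T$ can be checked to fail (via the same stabiliser calculation used in the first case). This bookkeeping, while elementary, is the technical crux of the argument.
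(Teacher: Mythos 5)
Your torsion-free analysis is correct and takes a genuinely different route from the paper: you count coincidences in the $3\times 3$ sum matrix and translate them into the inequality $\sum_{\delta\neq 0}M_S(\delta)M_T(\delta)\ge 2(3+\mu)$, whereas the paper argues directly with the sets (first disposing of order-$2$ elements, then using $|S\cap(S+x)|\le 1$ and $|(S+x)\cap(S+y)|=1$ for $x,y\in T$ to pin down $T$ as a translate of $S$). Your Case 1 ($T=x-S$, then $|S-S|=6$ is killed because the order-$2$ generator stabilises $S-S$, so $|S-S|=5$ and $S$ is a progression) and your Case 2a (equality of difference sets forces $T$ to be a translate or a reflection of $S$, and the reflection gives $|S+T|=7$) both check out.

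However, there is a genuine gap: the residual case where some nonzero difference of $S$ or $T$ has order $2$ is only sketched, and you yourself call it ``the technical crux.'' It is not negligible for your method, because an order-$2$ difference makes $M_S(\delta)=2$ possible and destroys the bound $\sum M_SM_T\le 6$ on which Case 2a rests; moreover your guess that the configuration ``confines $S\cup T$ to a small Klein-type subgroup'' is not how it resolves. The efficient way to close the gap is the paper's opening move: after translating so that the order-$2$ difference is an element $a\in S\setminus\{0\}$ with $2a=0$, the set $S$ contains the coset $H=\{0,a\}$, so $T+S\supset T+H$ is $H$-periodic; since $|T+S|\le 6$ and $T+S$ is aperiodic one must have $|T+H|=4$, whence (after translation) $T=\{0,a,c\}$ and $T+S\supset H\cup(b+H)\cup(c+H)$ forces $c+H=b+H$, i.e.\ $T=r+S$ with $r\in H$. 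Until this case is actually carried out (for both $\mu=0$ and $\mu=1$), the proof is incomplete.
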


\begin{proof} Without loss of generality, we may assume $0\in T\cap S$. Suppose that none of the sets  $S$ and $T$ is a progression.

Assume first that there is an $a\in S\setminus \{0\}$ with $2a=0$.  Put $H=\{0,a\}$ and $S=\{0,a,b\}.$
We have $|T+H|=2|H|,$ otherwise $T+S$ would contain a periodic subset of size $6$. By translating suitably $T$,
we may take $T=\{0,a,c\}$. Now $T+S\supset H\cup (b+H)\cup (c+H)$. We must have $b+H=c+H$ and hence $c=b+r,$ for some $r\in H$.
Thus $T=r+S$ and (2) holds. So we may assume that $2x\neq 0$ for every $x\in (S\cup T)\setminus \{0\}$.

Now for every $x\in T\setminus \{0\}$, we have $|(x+S)\cap X|\le 1$, otherwise $S$ would be
an $x$--progression, a contradiction.
Observe that $|(S+x)\cap (S+y)|=1$, for any two distinct elements $x,y\in T$, since otherwise putting $T=\{x,y,z\}$,
$|(S+z)\cap (S+x)|\ge 2$ or $|(S+z)\cap (S+y)|\ge 2$, a contradiction.

Notice that the last observation still valid if $S$ and $T$ are permuted.

Put $T=\{0,u,v\}.$  Since $|S\cap (S+u)|=1$, there is an  $a$ such that  $S-a=S_0=\{0,u,w\}$. Since $|S_0\cap (S_0+v)|=1$,
we have $w\in \{u+v, u-v, v,-v\}$. Up to a translation by $-u$, we have $w=v$ or $w=-v$.
 Assuming $w=-v,$ we have $S+T= \{0,u,v,2u,v+u,-v,u-v\}$. It follows that $u-v\in \{0,u,v,2u,v+u,-v,u-v\}$.
All possible cases imply that one of the sets  $S$ and $T$ is a progression, a contradiction.
Thus  $w=v$, and hence  (2) holds.\end{proof}

\subsection{Isoperimetric tools}

Let $S$ be a finite subset of an abelian group. A { $k$--fragment} of $S^*$ will be called a {\em $k$--fragment} of $S$.
This notion is independent on the choice of $S^*$  \cite{hejcv}. A $k$--fragment of $-S$ will be  called a {\em negative}
$k$--fragment of $S$.


\begin{lemma}\cite{hiso2007} \label{negative} Let $0\in S$ be a generating subset of an abelian group $G$. Let   $X$     be a  $k$--fragment of $S$ and let $A$ be a $k$--atom of $S$. Then
$-X$ is a negative $k$--fragment of $S$.
Moreover $X^S$ is a negative $k$--fragment of $S$ if  $G$ is finite.
In particular, $|X^S|\ge |A|.$
\end{lemma}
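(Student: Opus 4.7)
The plan is to derive both assertions from a single symmetry: the involution $\iota\colon x\mapsto -x$ exchanges the $S$-data with the $-S$-data on any subset of $G$. Concretely, for every subset $X$,
\[
 \partial_{-S}(-X)=-\partial_S(X), \qquad (-X)^{-S}=-X^S,
\]
which follow from $-(Y+Z)=(-Y)+(-Z)$ and $-(G\setminus Y)=G\setminus(-Y)$. Since $\iota$ is a cardinality-preserving bijection of $G$, it bijects the sets eligible in the definition of $\kappa_k(S)$ with those eligible in the definition of $\kappa_k(-S)$ and preserves boundary sizes. I would therefore conclude at once that $\kappa_k(S)=\kappa_k(-S)$, and that if $X$ is a $k$-fragment of $S$ then $-X$ satisfies $|-X|=|X|\ge k$, $|(-X)^{-S}|=|X^S|\ge k$, and $|\partial_{-S}(-X)|=|\partial_S(X)|=\kappa_k(S)=\kappa_k(-S)$. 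So $-X$ is a $k$-fragment of $-S$, settling the first assertion.

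For the second assertion, assume $G$ is finite, so that $X^S=G\setminus(X+S)$. The cardinality conditions are supplied by Lemma \ref{lee}: $|X^S|\ge k$ by hypothesis, and $|(X^S)^{-S}|\ge |X|\ge k$ because $X\subset(X^S)^{-S}$. To match the minimum boundary size, I would first verify the inclusion $X^S-S\subset G\setminus X$: if $y\in X$ and $y=z-s$ with $z\in X^S$, $s\in S$, then $z=y+s\in X+S$, contradicting $z\in X^S$. Combining this inclusion with $|G|=|X+S|+|X^S|=|X|+|\partial_S(X)|+|X^S|$ gives
\[
 |X^S-S|\le |G|-|X|=|X^S|+|\partial_S(X)|.
\]
Since $0\in S$ we have $X^S\subset X^S-S$, so this rearranges to $|\partial_{-S}(X^S)|\le |\partial_S(X)|=\kappa_k(S)=\kappa_k(-S)$. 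The reverse inequality $|\partial_{-S}(X^S)|\ge \kappa_k(-S)$ holds by definition, forcing equality and showing that $X^S$ is a $k$-fragment of $-S$, i.e.\ a negative $k$-fragment of $S$.

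For the final conclusion $|X^S|\ge |A|$, note that $\iota$ bijects $k$-atoms of $S$ with $k$-atoms of $-S$ while preserving cardinality, so a $k$-atom of $-S$ has size exactly $|A|$. Since $X^S$ is a $k$-fragment of $-S$, its size is at least that common atom-size $|A|$.

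I expect the only mildly delicate point to be the dual cardinality inequality $|X^S-S|\le |G|-|X|$, which is essentially the same bookkeeping carried out in Lemma \ref{leee}; once that inclusion is in hand, both assertions are immediate consequences of the $\iota$-symmetry and of Lemma \ref{lee}.
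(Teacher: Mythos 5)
Your argument is correct, but note that the paper itself offers no proof of this lemma to compare against: it is imported wholesale from \cite{hiso2007} as one of the "basic facts from the isoperimetric method." Judged on its own terms, your proof is sound and self-contained. The first assertion is exactly the $\iota$-symmetry you describe: the identities $\partial_{-S}(-X)=-\partial_S(X)$ and $(-X)^{-S}=-X^S$ do give $\kappa_k(S)=\kappa_k(-S)$ and carry $k$-fragments of $S$ to $k$-fragments of $-S$. For the second assertion, the three eligibility checks are all in order ($|X^S|\ge k$ from the fragment hypothesis, $|(X^S)^{-S}|\ge|X|\ge k$ from the inclusion $X\subset(X^S)^{-S}$ of Lemma \ref{lee}, and finiteness of $X^S$ from finiteness of $G$), and your counting step $|X^S-S|\le|G|-|X|=|X^S|+|\partial_S(X)|$, combined with $X^S\subset X^S-S$ (using $0\in S$), correctly pins $|\partial_{-S}(X^S)|$ between $\kappa_k(-S)$ and $\kappa_k(S)$, forcing equality. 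The final inequality $|X^S|\ge|A|$ then follows as you say, since $\iota$ matches $k$-atoms of $S$ with $k$-atoms of $-S$ of the same cardinality and every $k$-fragment of $-S$ is at least as large as a $k$-atom of $-S$. The only point worth flagging is that the "in particular" clause is stated without the finiteness hypothesis; your argument covers the finite case, and when $G$ is infinite the inequality is vacuous since $X^S$ is then infinite while $A$ is finite.
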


A fragment $X$ of $S$ such that $|X|\le |X^S|$ will be called a {\em proper} fragment.
The following result will be a  fundamental tool in this paper: 

\begin{theorem}\cite{hiso2007}
Let $0\in S$ be a generating subset of an abelian group $G$.
 If $X$ and $Y$ are two $k$--fragments of
$S$ such that $|X\cap Y|\ge k$ and $|(X\cup Y)+S|\le |G|-k$.
 Then  $X\cap Y$ and $X\cup Y$ are  $k$--fragments of $S$.

In particular, $X\cap Y$ is a $k$--fragment if $|X|\le |Y^S|$ or if $X$ and $Y$ are proper $k$--fragments.

\label{interfrag} \end{theorem}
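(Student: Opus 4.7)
The plan is to exploit the submodularity of the sumset operator, which reduces the main statement to elementary counting.

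Two set-theoretic facts drive everything. First, distributivity gives $(X \cup Y)+S = (X+S) \cup (Y+S)$. Second, $(X \cap Y)+S \subseteq (X+S) \cap (Y+S)$. Applying the identity $|A\cup B|+|A\cap B|=|A|+|B|$ to the pair $\{X+S,\,Y+S\}$ yields
$$|(X\cap Y)+S|+|(X\cup Y)+S|\;\le\;|X+S|+|Y+S|.$$
Subtracting the parallel identity $|X\cap Y|+|X\cup Y|=|X|+|Y|$ produces the submodular inequality for the boundary,
$$|\partial(X\cap Y)|+|\partial(X\cup Y)|\;\le\;|\partial(X)|+|\partial(Y)|\;=\;2\kappa_k(S).$$

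Next I would verify that $X\cap Y$ and $X\cup Y$ are both admissible in the definition of $\kappa_k(S)$, i.e.\ each has both itself and its $S$-complement of cardinality $\geq k$. For the intersection: $|X\cap Y|\geq k$ is assumed, and $(X\cap Y)^S=G\setminus((X\cap Y)+S)\supseteq G\setminus\bigl((X+S)\cap(Y+S)\bigr)=X^S\cup Y^S$, so $|(X\cap Y)^S|\geq|X^S|\geq k$ because $X$ is a $k$-fragment. For the union: $|X\cup Y|\geq|X|\geq k$, and $|(X\cup Y)^S|=|G|-|(X\cup Y)+S|\geq k$ by the second hypothesis. Both boundaries are therefore at least $\kappa_k(S)$, and combined with the submodular inequality each must equal $\kappa_k(S)$. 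Hence $X\cap Y$ and $X\cup Y$ are both $k$-fragments.

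For the ``in particular'' clause, the second hypothesis $|(X\cup Y)+S|\leq|G|-k$ is equivalent, by complementation in $G$, to $|X^S\cap Y^S|\geq k$; so the task becomes to derive this intersection bound in each special case. In the ``proper'' case one uses $|X\cup Y|\leq|X|+|Y|-|X\cap Y|\leq(|G|-\kappa_k(S))-k$, and then combines with the contrapositive (assume $X\cap Y$ is not a fragment, so $|\partial(X\cup Y)|<\kappa_k(S)$) to force $|(X\cup Y)+S|<|G|-k$, a contradiction that delivers the conclusion. For $|X|\leq|Y^S|$, one appeals to Lemma \ref{negative}, which identifies $X^S$ and $Y^S$ as negative $k$-fragments, and then applies the main part of the theorem (or its negative analogue) to the pair $\{X^S,Y^S\}$ with respect to $-S$. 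I expect this last step---reducing the alternative hypotheses to the form required by the main statement without circularity---to be the only delicate part of the argument; the core submodular inequality and admissibility check are mechanical.
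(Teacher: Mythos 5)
The paper does not actually prove Theorem \ref{interfrag}; it is quoted from \cite{hiso2007}, so there is no in-paper argument to compare against. Your main argument is the standard one and is correct: submodularity of $X\mapsto |X+S|$ (via $(X\cup Y)+S=(X+S)\cup(Y+S)$ and $(X\cap Y)+S\subseteq(X+S)\cap(Y+S)$) gives $|\partial(X\cap Y)|+|\partial(X\cup Y)|\le 2\kappa_k(S)$, your admissibility checks for $X\cap Y$ and $X\cup Y$ are exactly right, and the squeeze forces both boundaries to equal $\kappa_k(S)$. The ``proper'' case of the final clause also goes through essentially as you sketch it: properness gives $|X|,|Y|\le(|G|-\kappa_k(S))/2$, hence $|X\cup Y|\le |G|-\kappa_k(S)-k$, and since $X\cap Y$ is admissible the submodular inequality yields $|\partial(X\cup Y)|\le\kappa_k(S)$, whence $|(X\cup Y)+S|\le|G|-k$ and the main statement applies.

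The gap is in the case $|X|\le|Y^S|$. As you describe it, you would invoke Lemma \ref{negative} and apply the main statement (or its negative analogue) to the pair $\{X^S,Y^S\}$ with respect to $-S$; but the intersection hypothesis needed there is $|X^S\cap Y^S|\ge k$, which is precisely the inequality $|(X\cup Y)+S|\le|G|-k$ you are trying to establish, so the reduction is circular --- as you yourself half-suspect. The duality is not needed; a direct count closes the case. Since $X\cap Y$ is admissible (you already verified $|X\cap Y|\ge k$ and $(X\cap Y)^S\supseteq X^S$), we have $|(X\cap Y)+S|\ge |X\cap Y|+\kappa_k(S)\ge k+\kappa_k(S)$, and therefore
$$|(X\cup Y)+S|\;\le\;|X+S|+|Y+S|-|(X\cap Y)+S|\;\le\;|X|+|Y|+\kappa_k(S)-k\;=\;|X|-|Y^S|+|G|-k\;\le\;|G|-k,$$
using $|Y^S|=|G|-|Y|-\kappa_k(S)$ and the hypothesis $|X|\le|Y^S|$. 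This also subsumes the proper case, since properness of $X$ and $Y$ gives $|X|\le(|G|-\kappa_k(S))/2\le|Y^S|$. With that substitution your proof is complete.
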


The basic intersection theorem is the following:
\begin{theorem}\cite{halgebra,hiso2007}
Let $0\in S$ be a generating subset of an abelian group $G$.
 Let $A$ be a $k$--atom of $S$ and let
   $F$   be a   $k$-fragment of $S$ with  $|A\cap F|\ge k$. Then
  $A\subset F.$
In particular, $A=F$ if $F$ is a $k$-atom.
\label{interatoms} \end{theorem}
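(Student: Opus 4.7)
The plan is to show that $A\cap F$ is itself a $k$-fragment of $S$. Once this is in hand, minimality of the $k$-atom $A$ forces $|A\cap F|\ge |A|$; combined with $A\cap F\subseteq A$, this gives $A\cap F=A$ and hence $A\subset F$.

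To prove that $A\cap F$ is a $k$-fragment, I would apply the ``In particular'' clause of Theorem \ref{interfrag} to the pair $(X,Y):=(A,F)$. Its hypotheses are $|X\cap Y|\ge k$ (given) and $|X|\le |Y^S|$. The latter is supplied by Lemma \ref{negative}: since $F$ is a $k$-fragment and $A$ is the $k$-atom of $S$, one has $|F^S|\ge |A|$. Thus $A\cap F$ is a $k$-fragment, completing the first assertion. The final clause ``$A=F$ if $F$ is also a $k$-atom'' is then immediate: both $A$ and $F$ realize the minimum $k$-fragment cardinality, so $|A|=|F|$, and this together with $A\subset F$ forces $A=F$.

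There is no serious obstacle in this outline: the two substantive ingredients---submodularity of the sumset boundary (repackaged inside Theorem \ref{interfrag}) and the duality bound $|F^S|\ge |A|$ (Lemma \ref{negative})---are already available. The only point worth emphasizing is that the hypothesis $|X|\le |Y^S|$ in the ``In particular'' clause is precisely calibrated to be automatic when $X$ is a $k$-atom and $Y$ is a $k$-fragment, which is exactly the situation here; had one tried to invoke the main statement of Theorem \ref{interfrag} instead, one would need to verify $|(A\cup F)^S|\ge k$ directly, which is not obviously true and is exactly the step bypassed by Lemma \ref{negative}.
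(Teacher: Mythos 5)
Your proof is correct and follows essentially the same route as the paper: Lemma \ref{negative} supplies the duality inequality needed to trigger the ``in particular'' clause of Theorem \ref{interfrag}, and minimality of the atom finishes the argument. The only cosmetic difference is that you use $|F^S|\ge |A|$ (roles $X=A$, $Y=F$) where the paper uses $|A^S|\ge |F|$ (roles swapped); both yield that $A\cap F$ is a $k$--fragment, and your version is, if anything, the more direct reading of Lemma \ref{negative}.
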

\begin{proof}
Suppose that $|A\cap F|\ge k$. By Lemma \ref{negative}, $|A^S|\ge |F|$.
By Theorem \ref{interfrag}, $A\cap F$ is a $k$--fragment and hence $A\cap F=A$.
\end{proof}

The structure of $1$--atoms is the following:
\begin{proposition} \label{Cay}\cite{hast,hiso2007} Let  $ S$  be a finite generating subset of an abelian  group $G$ with
$0\in S$.  Let $H$ be a $1$--atom of $S$ with $0\in H$ and let  $F$ be a $1$-fragment of $S$.
   Then
   $H$ is a subgroup and $F+H=F$. Moreover
 $\kappa _1(S)\geq \frac{|S|}{2}.$
\end{proposition}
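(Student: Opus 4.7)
The plan is a three-step argument, one for each assertion of the proposition. For the first step, showing $H$ is a subgroup, I would exploit the translation-invariance of the operators involved: $\partial_S(g+X) = g + \partial_S(X)$, $(g+X)^S = g + X^S$, and $|g+X| = |X|$, so every translate $g + H$ of the $1$-atom $H$ is itself a $1$-atom. Taking any $g \in H$, both $H$ and $g + H$ contain $g$ (since $0 \in H$), so $|H \cap (g+H)| \geq 1$, and Theorem \ref{interatoms} with $k=1$ forces $g + H = H$. Applied to every $g \in H$, this shows $H$ is closed under translation by its own elements; combined with $0 \in H$, this makes $H$ a subgroup.

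For the second assertion, I would repeat the same idea with a general $1$-fragment $F$ playing the role of the fragment and $x + H$ the role of the atom. Given $x \in F$, the translate $x + H$ is a $1$-atom, and $x \in (x+H) \cap F$ since $0 \in H$. Theorem \ref{interatoms} then forces $x + H \subset F$, so $F + H \subset F$; the reverse inclusion is immediate from $0 \in H$.

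For the bound $\kappa_1(S) \geq |S|/2$, I would first observe $H \neq G$, since the fragment condition $|H^S| \geq 1$ rules this out. Because $S$ generates $G$ and $H$ is a proper subgroup, $S \not\subset H$, so $H + S$ is a union of at least two $H$-cosets; therefore $|\partial H|$ is a positive multiple of $|H|$, giving $|\partial H| \geq |H|$. On the other hand, $0 \in H$ yields $S \subset H+S$, so $|H+S| \geq |S|$ and hence $|\partial H| \geq |S| - |H|$. Adding these two inequalities gives $2|\partial H| \geq |S|$, i.e.\ $\kappa_1(S) = |\partial H| \geq |S|/2$.

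The only subtlety, and the main obstacle if any, is recognizing that $k$-atoms are preserved under translation and that for $k = 1$ the intersection hypothesis of Theorem \ref{interatoms} reduces to mere non-emptiness; beyond that the proof is essentially a bookkeeping exercise and I expect it to take only a few lines in final form.
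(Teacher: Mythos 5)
Your proof is correct. The paper itself gives no proof of this proposition (it is imported from \cite{hast,hiso2007}), and your argument is precisely the standard one from those sources: translates of $1$--atoms are $1$--atoms, so Theorem \ref{interatoms} with $k=1$ (where the intersection hypothesis is just non-emptiness) forces $g+H=H$ for $g\in H$ and $x+H\subset F$ for $x\in F$, and the bound $\kappa_1(S)\ge |S|/2$ follows by averaging $|\partial(H)|\ge |H|$ (since $\partial(H)$ is a nonempty union of $H$--cosets, using that $S$ generates and $H\neq G$) against $|\partial(H)|=|H+S|-|H|\ge |S|-|H|$.
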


 We need the following consequence of the above result:

\begin{proposition} \label{olsonreduction}
Let  $ Y$  be a finite subset   of an abelian  group $G$ with
$0\in Y$. Put $K=\subgp{Y}$ and let $Z\subset K$. Let  $ X$
be an aperiodic  subset   of  $G$ such that $|X+Y|\le |X|+r|Y|$ and
let   $X=X_0\cup \cdots \cup X_t$  be a $K$--decomposition. Set $W=\{i\in [0,t]: |X_i+Y|<|K\}$
and $P=\{i\in [0,t]: |X_i|=|K|\}.$  Then
\begin{itemize}
  \item[(i)] $|X+Y|\ge |X|+|W|\frac{|Y|}{2}$ and  $|W|\le 2r.$ If $|Y|\ge 3$  then $|W|\le 2r-1$.
  \item[(ii)] If $|W|=2r$, then  $|X+Y|= |X|+r|Y|$ and $|P|=t+1-2r$. Moreover $X_i$ and $Y$ are
progressions with a same difference, for every $i\in W$.
 \item[(iii)] If $|W|=2$ and $r=1$, then  $|(\bigcup _{i\in W} X_i)+Z|\ge |\bigcup _{i\in W} X_i|+|W|.$
 \item[(iv)] If $X+Y$ is aperiodic, $r=1$ and $W=\{w\}$, then  $X\setminus X_w$ is ($K$,-1)-periodic.
\end{itemize}
\end{proposition}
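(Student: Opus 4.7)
The plan is to set up a single sumset identity and then read off (i)--(iv) from it. Since $X_0,\dots,X_t$ lie in distinct cosets of $K$ and $Y\subseteq K$, the sumsets $X_i+Y$ also lie in distinct $K$-cosets, yielding
\[
|X+Y|-|X| \;=\; \sum_{i\in W}\bigl(|X_i+Y|-|X_i|\bigr)\;+\;\sum_{i\notin W}\bigl(|K|-|X_i|\bigr).
\]
For (i), Proposition \ref{Cay} bounds each term of the first sum from below by $\kappa_1(Y)\ge|Y|/2$, while the second sum is nonnegative; combining with the hypothesis $|X+Y|\le|X|+r|Y|$ gives $|W|\le 2r$.

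The main obstacle is the strict refinement $|W|\le 2r-1$ when $|Y|\ge 3$. If $|Y|$ is odd, then $\kappa_1(Y)\ge(|Y|+1)/2$, and the same chain of inequalities gives $|W|\le 2r|Y|/(|Y|+1)<2r$, whence $|W|\le 2r-1$. If $|Y|$ is even with $|Y|\ge 4$, I would argue by contradiction: assuming $|W|=2r$, equality propagates through the whole chain, so $\kappa_1(Y)=|Y|/2$, each $X_i$ with $i\in W$ is a $1$-fragment of $Y$, and each $X_i$ with $i\notin W$ is a full $K$-coset. Let $H$ be the $1$-atom of $Y$; by Proposition \ref{Cay}, $H$ is a subgroup of $K$ and each $1$-fragment of $Y$ is $H$-periodic. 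The identity $\kappa_1(Y)=|H+Y|-|H|=|Y|/2$ together with $|H+Y|\ge|Y|$ (since $0\in H$) forces $|H|\ge|Y|/2\ge 2$. Since every $X_i$ is then $H$-periodic (the fragments directly, the full $K$-cosets because $H\le K$), $X$ itself is $H$-periodic with $|H|\ge 2$, contradicting aperiodicity of $X$.

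For (ii), the same equality discussion with $|W|=2r$ simultaneously yields $|X+Y|=|X|+r|Y|$ and $[0,t]\setminus W=P$, hence $|P|=t+1-2r$; since (i) now forces $|Y|\le 2$, we take $Y=\{0,a\}$, and the condition $|(X_i+a)\setminus X_i|=1$ forces each $X_i$ with $i\in W$ to be an $a$-progression via a short orbit analysis on the translation-by-$a$ graph on $X_i$ (no cycle can occur, since a cycle would fill the $K$-coset and contradict $i\in W$). For (iii), (ii) delivers $Y=\{0,a\}$ with $a$ generating the cyclic group $K$, and each $X_{i_j}$ is a proper $a$-progression, so its stabilizer in $K$ is trivial; distinct elements of $Z$ then give distinct translates of $X_{i_j}$, so $|X_{i_j}+Z|\ge|X_{i_j}|+1$, and since the two sumsets lie in different $K$-cosets, $|U+Z|\ge|U|+2$. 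For (iv), a coset-level stabilizer check shows $\mathrm{stab}(X+Y)\cap K=\mathrm{stab}(X_w+Y)$, so aperiodicity of $X+Y$ forces $X_w+Y$ to be aperiodic in its $K$-coset; Kneser's theorem then gives $|X_w+Y|-|X_w|\ge|Y|-1$, and substituting into the identity above yields $\sum_{i\ne w}(|K|-|X_i|)\le 1$, i.e., $X\setminus X_w$ is $(K,-1)$-periodic.
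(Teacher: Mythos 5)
Your proposal is correct and follows essentially the same route as the paper: the same coset-wise decomposition of $|X+Y|-|X|$, the lower bound $\kappa_1(Y)\ge |Y|/2$ from Proposition \ref{Cay}, the equality analysis via the $1$--atom $H$ (with $H$--periodicity of the $X_i$ contradicting aperiodicity of $X$) to force $|Y|=2$ when $|W|=2r$, and Kneser's Theorem applied to the aperiodic block $X_w+Y$ for (iv). Your odd/even split for the refinement $|W|\le 2r-1$ is just a reorganization of the paper's argument, which instead deduces $|H|=1$ from aperiodicity and then $|Y|=2$.
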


\begin{proof}
 Let $H$ be a $1$--atom of $Y$ with $0\in H$. We have by Proposition \ref{Cay},

 \begin{eqnarray*}
 |X|+r|Y|\ge |X+Y|&\ge& \sum_{ i \in W} |X_i+Y|+ \sum_{ i \notin W} |X_i+Y|\\
 &\ge& \sum_{ i \in W} (|X_i^*|+\kappa _1(Y))+ \sum_{ i \notin W} |K|\\
 &\ge &  \sum_{ i \in W}(|X_i|+ \frac{|Y|}{2})+  \sum_{ i \notin W}|X_i|\ge |X|+|W|\frac{|Y|}{2}.
  \end{eqnarray*}
 Hence $|W|\le 2r$. Assume now that $|W|= 2r$. Thus the  last chain consists of  equalities and therefore
 $  P=[0,t]\setminus W$ and  $\kappa _1(Y)=\frac{|Y|}{2}$.
 By Proposition \ref{Cay},
   $X_i+H=X_i$, for all $i\in W.$  In particular, $X+H=X$. Since $X$ is
   aperiodic, we have  $|H|=1$. Therefore $|Y|-1=|H+Y|-|H|=\kappa _1(Y)=\frac{|Y|}{2}$.
 Hence $|Y|=2$. Put $Y=\{0,d\}.$  Since  $|X_i+Y|=|X_i|+1,$  $X_i$ is a progression with
  difference as $d,$ for all  $i \in W$. Thus

$$ |\bigcup _{i\in W} (X_i+Z)|\ge \sum _{i\in W} |Z+X_i^*|
\ge \sum_{ i \in W}  (|X_i|+1)= \sum_{ i \in W}|X_i|+|W|,$$
since $d$  generates $K$ and  $|X_i|<|K|$, for all $i\in W$.

Suppose that $X+Y$ is aperiodic, $r=1$ and   $W=\{w\}$. By Kneser's Theorem,
$|X+Y|\ge t|K|+|X_w+Y|\ge t|K|+|X_w|+|Y|-1,$
and  (iv) holds.\end{proof}

We need the following description of $2$-atoms:

\begin{theorem} \cite{hacta}{  Let  $S$  be a finite
generating subset of an abelian group $G$ with $0\in S$ and $\kappa _2 (S) \leq |S|$.
Also assume that $|S|\neq |G|-6$ if $\kappa _2 (S) = |S|$.
 Let
 $0\in H$ be a $2$--atom of $S$. Then either  $H$ is  a subgroup  or  $|H|=2.$
\label{k=d} }
\end{theorem}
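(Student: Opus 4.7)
The plan is to apply the basic intersection theorem (Theorem~\ref{interatoms}) to the $2$-atom $H$ and to its translates $h+H$ for $h\in H$. Since $\kappa_2(S)$ and the defining conditions of a $2$-atom are translation-invariant, each $h+H$ is itself a $2$-atom. Because $0\in H$, we have $h=h+0\in H\cap(h+H)$, so these intersections are always nonempty.

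This yields a useful dichotomy for each $h\in H$: either $|H\cap(h+H)|\ge 2$ --- in which case Theorem~\ref{interatoms} applied to the two $2$-atoms $H$ and $h+H$ forces $H\subset h+H$, and hence $H=h+H$ by cardinality, so $h$ stabilizes $H$ --- or $|H\cap(h+H)|=1$. If the first alternative holds for \emph{every} $h\in H$, then $H+H\subset H$; combined with $0\in H$ and $H$ finite, this makes $H$ a subgroup, and we are done.

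It remains to rule out $|H|\ge 3$ together with the existence of some $h\in H$ satisfying $|H\cap(h+H)|=1$. Assuming this, set $Y=H\cup(h+H)$, so $|Y|=2|H|-1$. Using $\kappa_2(S)\le|S|$ we have $|H+S|\le|H|+|S|$, and the same bound for $h+H$, so inclusion--exclusion gives
\[
|Y+S|\le |H+S|+|(h+H)+S|-|(H+S)\cap((h+H)+S)|\le 2(|H|+|S|)-|(H+S)\cap((h+H)+S)|.
\]
I would then compare this with the lower bound for $|Y+S|$ coming from Kneser's theorem (applied after extracting the stabilizer of $Y+S$), in order either to produce a $2$-fragment strictly smaller than $H$ (contradicting the atom property of $H$) or to extract enough periodicity in $H+S$ that, via Proposition~\ref{Cay} applied to $H$ viewed as a $1$-fragment, forces $H$ to contain and equal its $1$-atom. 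The exceptional hypothesis $|S|\ne|G|-6$ when $\kappa_2(S)=|S|$ pinpoints precisely the extremal configuration where the cardinality estimates above are tight and this reduction must fail.

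The main obstacle will be controlling the overlap $|(H+S)\cap((h+H)+S)|$ in the intersection--one case, and ruling out the scenarios in which $H+S$ has a nontrivial stabilizer $K$ that neither coincides with $H$ nor admits a reduction modulo $K$; this is exactly where the hypothesis $\kappa_2(S)\le|S|$ and the exclusion $|S|\ne|G|-6$ are both indispensable.
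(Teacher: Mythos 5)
First, note that the paper does not actually prove this statement: Theorem~\ref{k=d} is imported from \cite{hacta}, and the text only remarks that a short proof appears in \cite{hiso2007}. So there is no internal proof to compare against; your proposal has to stand on its own as a complete argument, and it does not.

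The first half of your argument is correct and is indeed the standard opening move: translates of $2$--atoms are $2$--atoms, so for each $h\in H$ either $|H\cap(h+H)|\ge 2$, in which case Theorem~\ref{interatoms} forces $H=h+H$, or $|H\cap(h+H)|=1$; and if the first alternative holds for every $h$ then $H+H\subset H$ and $H$ is a subgroup. The genuine gap is everything after that. The entire content of the theorem is the exclusion of the case $|H|\ge 3$ with some $h\in H$ satisfying $|H\cap(h+H)|=1$, and for that case you only offer a plan, not a proof. Concretely: (a) your inclusion--exclusion bound gives an \emph{upper} bound $|Y+S|\le 2(|H|+|S|)-|(H+S)\cap((h+H)+S)|$, which is useless without a \emph{lower} bound on the overlap term, and you explicitly concede that controlling this overlap is "the main obstacle" --- but that overlap estimate is precisely where the real work lies; (b) the proposed appeal to Kneser's theorem and to Proposition~\ref{Cay} is never instantiated --- you do not say what periodic set you would apply Kneser to, nor why a nontrivial stabilizer of $Y+S$ or $H+S$ would transfer to $H$ itself (note $H+S$ being periodic does not force $H$ to be periodic); and (c) the hypothesis $|S|\ne|G|-6$ when $\kappa_2(S)=|S|$ is asserted to "pinpoint the extremal configuration" but is never actually invoked in any inequality, so you have no way of knowing your (unwritten) estimates would close exactly outside this exceptional case. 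As written, the proposal establishes only the easy dichotomy and defers the theorem's actual content to an unexecuted computation.
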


A short proof of Theorem \ref{k=d} is given in \cite{hiso2007}. A generalization of the above
 result to the case $\kappa _2 (S) \leq |S|+
  4$ is obtained by the authors of \cite{hgochowla}.
\subsection{Vosper subsets}

Let $0\in S$ be a  subset of an abelian group $G.$ We
shall say that $S$ is a {\em Vosper subset} if for all $X\subset G$
with $|X|\ge 2$, we have $|X+S|\ge \min (|G|-1,|X|+|S|)$.

We need the following lemma:
\begin{lemma}\cite{hkemp} { Let $S$ be a finite generating subset of an
abelian group $G$ such that $0 \in S$.   Let $X\subset G$  be such
that   $|X+S|=|X|+|S|-1$ and $|X|\ge |S|$. Assume moreover  that $S$ is either a Vosper subset
or a progression. Then for every $y\in
S$, we have $|X+(S\setminus \{y\})|\ge |X|+|S|-2$. \label{vominus}}
\end{lemma}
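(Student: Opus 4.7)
The plan is to recast the inequality $|X+(S\setminus\{y\})|\ge |X|+|S|-2$ into the equivalent statement $|X\setminus(X+T')|\le 1$, where $T':=(S-y)\setminus\{0\}$. Since $S\setminus\{y\}=y+T'$, we have $|X+(S\setminus\{y\})|=|X+T'|$; and since $X\cup(X+T')=X+(T'\cup\{0\})=(X-y)+S$ has cardinality $|X+S|=|X|+|S|-1$, this yields the identity $|X+T'|=|X|+|S|-1-|X\setminus(X+T')|$. Unpacking, $|X\setminus(X+T')|$ counts the elements $x\in X$ satisfying the uniqueness condition $(x+y-S)\cap X=\{x\}$. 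I would then treat the two cases of the hypothesis on $S$ (progression or Vosper) separately.

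For $S$ a progression with difference $r$, I would decompose $X=\bigsqcup_\alpha X_\alpha$ into cosets of $H:=\langle r\rangle$; the sumset $X+S$ decomposes accordingly. The constraint $|X+S|=|X|+|S|-1$, via Kneser applied inside each coset, produces a dichotomy: either (a) exactly one coset $\alpha_0$ is \emph{partial} with $X_{\alpha_0}+S\subsetneq \alpha_0+H$ (all other $X_\alpha$ being the full coset $\alpha+H$), in which case rigidity forces $X_{\alpha_0}$ to be an AP with difference $r$; or (b) every coset is \emph{filling} ($X_\alpha+S=\alpha+H$) with total gap $\sum_\alpha(|H|-|X_\alpha|)=|S|-1$. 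In case~(a), a direct computation using the AP structure of $X_{\alpha_0}$ shows at most one element meets the uniqueness condition, and full cosets contribute none. In case~(b), any $x$ meeting the uniqueness condition forces the $|S|-1$ specified positions around $x$ (in the direction dictated by $y$'s position in $S$) to lie in $X_\alpha$'s gap; since the global gap budget is $|S|-1$, at most one such $x$ exists. In the clean endpoint case $S=\{0,r,\ldots,(k-1)r\}$ with $y=0$, the count equals $|B_{k-1}|$ where $B_i=\{x\in X:x-r,\ldots,x-ir\notin X\}$, and $|B_{k-1}|\le 1$ follows at once from the monotonicity $|B_1|\ge\cdots\ge|B_{k-1}|$ together with $\sum_{i=1}^{k-1}|B_i|=|S|-1$.

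For $S$ a Vosper subset, the Vosper inequality combined with $|X+S|=|X|+|S|-1<|X|+|S|$ forces $|X+S|\ge |G|-1$, whence $|X^c|\le |S|$. Arguing by contradiction, if distinct $z_1,z_2$ are both lost upon removing $y$, each satisfies $(z_i-S)\cap X=\{z_i-y\}$, so $z_i-(S\setminus\{y\})\subseteq X^c$. With $w:=z_2-z_1\ne 0$, these two translates of size $|S|-1$ sit inside the small $X^c$ (essentially filling it when $|X+S|=|G|$, nearly so when $|X+S|=|G|-1$); unpacking forces $w\in\operatorname{stab}(S\setminus\{y\})$ (or a near-stabilizer relation in the boundary subcase). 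Applying the Vosper property to $X':=\{y,y+w\}$ then yields $|X'+S|\le |S|+1$, contradicting Vosper unless $|S|\ge |G|-2$; combined with $|X|\ge |S|$ and $|X|+|S|\le |G|+1$ this forces $|G|\le 5$, an edge range handled directly. The main obstacle I expect is the rigidity step in case~(a) of the progression case, namely establishing the AP structure of $X_{\alpha_0}$ when the Kneser stabilizer inside $H$ is itself nontrivial; this can be handled by a further recursive decomposition by the inner stabilizer (each reduction preserves the equality in Kneser's theorem), but the bookkeeping is delicate.
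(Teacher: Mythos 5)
The paper never proves this lemma --- it imports it from \cite{hkemp} --- so there is no in-paper argument to compare against and I can only judge your proposal on its own terms. Your reduction to showing that at most one element of $X+S$ is ``lost'' upon deleting $y$ is the right start, and both your Vosper sub-case $|X+S|=|G|$ and your progression case~(a) are sound. One remark: since $0\in S$ and $S$ generates $G$, a progression $S$ with difference $r$ forces $G=\langle r\rangle$, so your $H$-coset decomposition is trivial (there is exactly one coset), and the ``recursive inner-stabilizer'' difficulty you flag as the main obstacle never arises: when $X+S\neq G$, the identity $|X+S|=|X|+\sum_j\min(g_j,|S|-1)$ over the cyclic gaps $g_j$ of $X$ forces $X$ to be a single $r$-run directly.

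The real gaps are elsewhere. First, in progression case~(b) with $y$ an \emph{interior} element of $S$: the budget argument shows two lost elements $z_1\neq z_2$ must satisfy $z_1-(S\setminus\{y\})=z_2-(S\setminus\{y\})$ (otherwise the union of the two forbidden-position sets exceeds the gap budget $|S|-1$), but this equality is not yet a contradiction --- it only says $z_1-z_2$ stabilizes $S\setminus\{y\}$, and a progression minus an interior point \emph{can} be periodic (e.g.\ $\{0,2\}\subset\Z/4\Z$). You must exclude this using $|G|=|X|+|S|-1\ge 2|S|-1$ (this is precisely where the hypothesis $|X|\ge|S|$ enters): a coset of a subgroup of order $q\ge 2$ spans at least $(q-1)|G|/q\ge|G|/2>|S|-1$ consecutive positions and so cannot fit inside $S$. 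Your $B_i$ monotonicity argument covers only the endpoint case of $y$. Second, and more seriously, the Vosper boundary sub-case $|X+S|=|G|-1$ fails as described: there $|G\setminus X|=|S|$, the two translates of $S\setminus\{y\}$ determine $G\setminus X$ only up to one element each, the resulting ``near-stabilizer'' relation yields only $|\{y,y+w\}+S|\le|S|+2$, and this does \emph{not} contradict the Vosper bound $\min(|G|-1,|S|+2)$. The repair is to use the unique element $v\notin X+S$: then $(v-S)\cap X=\emptyset$ together with $|G\setminus X|=|S|$ gives $G\setminus X=v-S$, and already a \emph{single} lost element $z$ yields $d=v-z\neq 0$ with $d+(S\setminus\{y\})\subseteq S$ and $d+y\notin S$, hence $|\{0,d\}+S|\le|S|+1$; this contradicts the Vosper property since $|G|=|X|+|S|\ge 2|S|>|S|+2$ for $|S|\ge 3$, while $|S|\le 2$ is a progression.
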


We need the following lemma which is a  consequence of Theorem \ref{k=d}:

\begin{proposition} \cite{hejcv,hiso2007} \label{ejcf}{
Let  $S$  be a  finite generating
subset of an
 abelian group $G$ such that $0\in S$, $|S|\leq (|G|+1)/2$ and $\kappa _2 (S) \leq |S|-1$.

 If  $S$ is not a progression then $S$ is degenerate.

                    }\end{proposition}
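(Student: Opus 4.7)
The plan is to prove the contrapositive: assume $S$ is non-degenerate and deduce that $S$ is a progression. Let $H$ be a $2$-atom of $S$ containing $0$. Since $\kappa_2(S)\le |S|-1<|S|$, Theorem \ref{k=d} applies, so $H$ is either a subgroup or has $|H|=2$. The subgroup option would exhibit a subgroup as a $2$-fragment, contradicting non-degeneracy; so $H=\{0,a\}$ with $a\ne 0$, and since $2a=0$ would recover the subgroup case $H=\langle a\rangle$, we may assume $r:=|\langle a\rangle|\ge 3$.

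The structural heart of the argument is a coset analysis modulo $\langle a\rangle$. For each coset $C$ of $\langle a\rangle$, put $T_C=S\cap C$; because $a$ fixes $C$ setwise, the contribution of $C$ to $|H+S|-|S|=|(a+S)\setminus S|$ equals the number of maximal $a$-progression blocks of $T_C$ when $T_C\notin\{\emptyset,C\}$ and is zero otherwise. Writing $|H+S|-|S|=\kappa_2(S)+2-|S|$ and using $\kappa_2(S)\le |S|-1$, the total block count is either $0$ or $1$. In the $0$ case $S$ is $\langle a\rangle$-periodic, $|\partial\langle a\rangle|=|S|-r\le |S|-3<\kappa_2(S)$, and $|S|\le(|G|+1)/2$ forces $|\langle a\rangle^S|=|G|-|S|\ge 2$, so $\langle a\rangle$ is an admissible candidate in the definition of $\kappa_2(S)$, contradicting its minimality.

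In the block-count $1$ case, $S$ decomposes as $j\ge 0$ full $\langle a\rangle$-cosets together with one partial coset meeting $S$ in a single $a$-progression of length $q\in[1,r-1]$, so $|S|=jr+q$. If $j=0$ then $S$ itself is an $a$-progression and we are done. Otherwise $j\ge 1$; comparing $|\partial\langle a\rangle|=jr$ with $\kappa_2(S)=jr+q-1$, the case $q\ge 2$ would give a strict inequality that forces $|\langle a\rangle^S|\le 1$, which combined with $|S|\le(|G|+1)/2$ and $r\ge 3$ yields $(j-1)r\le -3$, impossible. In the remaining case $q=1$ the two boundaries coincide, and Lagrange's theorem $r\mid |G|$ upgrades the trivial bound $|G|\ge 2|S|-1>2r$ to $|G|\ge 3r$, giving $|\langle a\rangle^S|\ge r\ge 3$ and making $\langle a\rangle$ a subgroup $2$-fragment --- once more contradicting non-degeneracy. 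The most delicate step I anticipate is precisely this borderline subcase $j=q=1$, where only the divisibility $r\mid |G|$ rescues the inequality $|\langle a\rangle^S|\ge 2$; everywhere else one has plenty of slack.
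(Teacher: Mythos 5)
Your argument is correct and follows exactly the route the paper intends: Proposition \ref{ejcf} is only cited there (from \cite{hejcv,hiso2007}) with the remark that it is ``a consequence of Theorem \ref{k=d}'', and your proof --- take a $2$--atom, rule out the subgroup case by non-degeneracy, reduce to $H=\{0,a\}$ with $\mathrm{ord}(a)\ge 3$, and count arcs of $S$ in the $\langle a\rangle$--cosets to force a single $a$--progression or exhibit $\langle a\rangle$ as a subgroup $2$--fragment --- is precisely that derivation, including the correct handling of the borderline subcase $j=q=1$ via $r\mid |G|$. The only loose ends are cosmetic: the existence of a $2$--atom (i.e.\ $2$--separability) should be noted, though the hypotheses force it except when $|G|\le 5$, where $S$ is trivially a progression, and in the $q\ge 2$ subcase the inequality should read $(j-1)r\le -2$ rather than $-3$, which changes nothing.
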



\begin{corollary} \label{haapv}
Let $S$ be a finite degenerate generating subset of an abelian group $G$ such
that $0 \in S$  and $\kappa _2 (S)\le
|S|<\frac{|G|}2$ and let $H$ be a hyper-atom of $S$. Then
$\phi (S)$ is either a progression or a Vosper
subset, where $\phi$ is the canonical morphism from $G$ onto
$G/H$.

\end{corollary}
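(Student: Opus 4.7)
The plan is to work in the quotient $\bar{G}=G/H$, write $\bar{S}=\phi(S)$, and reduce to Proposition \ref{ejcf} applied to $\bar{S}$. If $\bar{S}$ is a Vosper subset of $\bar{G}$, we are done; otherwise, by the definition of Vosper there exists $\bar{X}\subset\bar{G}$ with $|\bar{X}|\ge2$, $|\bar{X}^{\bar{S}}|\ge2$, and $|\bar{X}+\bar{S}|\le|\bar{X}|+|\bar{S}|-1$, which at once gives $\kappa_2(\bar{S})\le|\bar{S}|-1$.

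To apply Proposition \ref{ejcf} I still need $|\bar{S}|\le(|\bar{G}|+1)/2$. This is the only cardinality check in the proof: since $H$ is a $2$--fragment of $S$, we have $|H+S|\le|H|+|S|$, and combined with the strict inequality $|S|<|G|/2$ (forcing $2|S|\le|G|-1$) this yields $2|\bar{S}|\cdot|H|=2|H+S|\le 2|H|+|G|-1$, whence $2|\bar{S}|\le|\bar{G}|+1$. As $\bar{S}$ generates $\bar{G}$ and contains $0$, Proposition \ref{ejcf} then tells us that $\bar{S}$ is either a progression (and we are done) or degenerate.

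The main step — and the real content of the corollary — is to rule out the degenerate alternative via the maximality of $H$. Suppose $\bar{K}$ is a hyper-atom of $\bar{S}$, and set $K=\phi^{-1}(\bar{K})$, a subgroup of $G$ strictly containing $H$ (because $|\bar{K}|\ge 2$). Since $H\subset K$, the set $K+S$ is $H$-periodic, so $|K+S|=|\bar{K}+\bar{S}|\cdot|H|$ and $|K^S|=|\bar{K}^{\bar{S}}|\cdot|H|\ge 2|H|\ge 2$. Using that $H$ is a hyper-atom (so $|H+S|-|H|=\kappa_2(S)$), we obtain
\[
|K+S|-|K|=(|\bar{K}+\bar{S}|-|\bar{K}|)\cdot|H|=\kappa_2(\bar{S})\cdot|H|\le(|\bar{S}|-1)|H|=|H+S|-|H|=\kappa_2(S),
\]
so $K$ is a subgroup which is a $2$--fragment of $S$, contradicting the maximality of $H$. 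The main obstacle is therefore this lifting observation; once it is in hand, the remainder of the argument is a direct application of Proposition \ref{ejcf} in the quotient.
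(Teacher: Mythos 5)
Your proposal is correct and follows essentially the same route as the paper: assume $\phi(S)$ is neither Vosper nor a progression to get $\kappa_2(\phi(S))\le|\phi(S)|-1$, verify the cardinality hypothesis $2|\phi(S)|\le|G/H|+1$ from $|S+H|\le|S|+|H|<|G|/2+|H|$, invoke Proposition \ref{ejcf} to produce a subgroup $2$--fragment $\bar K$ of $\phi(S)$, and lift it to $\phi^{-1}(\bar K)$, whose boundary is at most $\kappa_2(S)$, contradicting the maximality of the hyper-atom $H$. The only cosmetic difference is that you take $\bar K$ to be a hyper-atom of $\phi(S)$ where any subgroup $2$--fragment suffices; the substance of the argument is identical.
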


\begin{proof}
Assume that $\phi(S)$ is neither a Vosper subset nor a progression.
Then $\kappa _2(\phi(S))\le |\phi (S)|-1$. We have $|S+H|\le |S|+|H|< \frac{|G|}2+|H|$. Therefore
$2|S+H|< |G|+2|H|$ and hence
$2|\phi (S)|\le \frac{|G|}{|H|}+1$.
By Proposition \ref{ejcf}, $\phi(S)$ has $2$--fragment $K$ which is a  subgroup.

We have $|\phi^{-1} (K)+S|\le (|K|+|\phi (S)|-1)|H|= |\phi^{-1} (K)|+|H+S|-|H|=|\phi^{-1} (K)|+\kappa _2(S).$
Since $K+\phi(S)\neq G/H$, we have $\phi^{-1} (K)+S\neq G.$ In particular, $\phi^{-1} (K)$ is $2$--fragment which is a subgroup, a contradiction.
\end{proof}

\subsection{ The strong isoperimetric property}
Let $S=\bigcup \limits_{0\le i\le
 u}S_i$ and $T=\bigcup \limits_{0\le i\le
 t}T_i$ be $H$--decompositions. A $(T,S,H)$--{\em matching}  is a family
$\{n_i; i\in J\},$ where $J\subset  [0,t]$ such that $G\setminus (T+H) \supset \bigcup \limits_{ i\in
 J}T_{i}+S_{n_i}$ is $H$--decomposition.
 We shall call $|J|$ the {\em size} of the matching.

We call the property in the   next result   the {\em
strong isoperimetric property}.

\begin{proposition} \cite{hiso2007}{ Let $G $ be an abelian group and let $S$ be a finite subset of $G$ with $0\in S$.
Let $H$ be a subgroup of $G$ which is a $2$--fragment
 and let   $S=S_0\cup \cdots \cup S_u$ and $T=T_0\cup \cdots \cup T_t$ be  $H$-decompositions.
If  ${|G|}\ge (t+u+1)|H|$, then there is a $(T,S,H)$--matching with size $=u$.
\label{strongip}}
\end{proposition}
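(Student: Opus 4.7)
The plan is to recast the conclusion as a bipartite matching problem in the quotient $G/H$ and to verify Hall's condition through Kneser's theorem, using the $2$--fragment hypothesis on $H$ to handle the periodic case.

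Let $\phi : G \to G/H$ denote the canonical morphism and set $\bar S = \phi(S)$, $\bar T = \phi(T)$, viewed as sets of $u+1$ and $t+1$ distinct elements of $G/H$. A $(T,S,H)$--matching of size $u$ is precisely a matching of size $u$ in the bipartite graph $B$ with parts $\bar T$ and $(G/H)\setminus\bar T$, in which $\bar t\in\bar T$ is joined to $\bar t+\bar s$ whenever $\bar s\in\bar S$ and $\bar t+\bar s\notin\bar T$. By the defect version of Hall's theorem, such a matching exists provided that for every $A\subset\bar T$,
\[
|(A+\bar S)\setminus\bar T|\ \ge\ |A|-\bigl((t+1)-u\bigr).
\]
Since the left side is at least $|A+\bar S|-(t+1)$, it suffices to establish the Kneser-type bound $|A+\bar S|\ge |A|+u$ for every $A\subset\bar T$.

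Let $K'$ be the stabilizer of $A+\bar S$ in $G/H$. Kneser's theorem yields $|A+\bar S|\ge |A+K'|+|\bar S+K'|-|K'|$. If $K'=\{0\}$ the desired bound follows immediately. If $K'\neq\{0\}$ and $\bar S+K'=G/H$, then the $K'$--invariance of $A+\bar S$ forces $A+\bar S=G/H$, so $|A+\bar S|=|G|/|H|\ge t+u+1\ge |A|+u$. Otherwise $L:=\phi^{-1}(K')$ is a proper subgroup of $G$ strictly containing $H$, and $L+S$ is $L$--periodic of size $|L|\cdot|\bar S+K'|/|K'|\le |G|-|L|$, so $|L^S|\ge |L|\ge 2$; hence $L$ is admissible for the infimum defining $\kappa_2(S)$. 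Since $H$ is a $2$--fragment,
\[
|L+S|-|L|\ \ge\ \kappa_2(S)\ =\ |H+S|-|H|\ =\ |H|\,u.
\]
Dividing by $|H|$ and using $|L|=|H|\cdot|K'|$ gives $|\bar S+K'|\ge |K'|+u$, and substituting into Kneser yields $|A+\bar S|\ge |A+K'|+u\ge |A|+u$, as required.

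The main obstacle is the periodic case $K'\neq\{0\}$: one must convert the $K'$--periodicity of $A+\bar S$ in $G/H$ into a relative boundary lower bound for the enlarged subgroup $L=\phi^{-1}(K')$ in $G$, and then invoke the minimality of $\kappa_2(S)$ attained at $H$ to recover a Kneser-type estimate strong enough to feed back into Hall's condition.
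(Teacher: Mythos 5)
The paper itself gives no proof of Proposition \ref{strongip}; it is imported verbatim from \cite{hiso2007}, so there is no internal argument to compare yours against, and I can only assess your proof on its own merits. It checks out. The translation of a $(T,S,H)$--matching of size $u$ into a bipartite matching between $\phi(T)$ and $(G/H)\setminus\phi(T)$ is faithful to the definition (distinct target cosets outside $T+H$ is exactly the $H$--decomposition condition), the reduction of the defect Hall condition to the single estimate $|A+\phi(S)|\ge |A|+u$ is valid, and the three cases on the stabilizer $K'$ are each sound: the aperiodic case is plain Kneser; the case $\phi(S)+K'=G/H$ is where the hypothesis $|G|\ge(t+u+1)|H|$ is consumed, via $A+\phi(S)\supseteq a+\phi(S)+K'=G/H$; and in the remaining case the pullback $L=\phi^{-1}(K')$ is finite, satisfies $|L|\ge 2$ and $|L^S|\ge|L|\ge 2$, hence is admissible in the minimum defining $\kappa_2(S)$, so $|L+S|-|L|\ge\kappa_2(S)=|\partial(H)|=u|H|$, which after dividing by $|H|$ gives precisely $|\phi(S)+K'|\ge|K'|+u$. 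This is an attractive, self-contained route: it needs only Kneser's theorem (in its stabilizer form, which is what \cite{tv} proves) plus the bare fact that $H$ realizes $\kappa_2(S)$, and it makes transparent why the $2$--fragment hypothesis is the right one.

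One caveat you should record: the defect version of Hall's theorem can only produce a matching of size $\min(u,t+1)$, since $J\subset[0,t]$. Your argument (and indeed the statement itself) therefore tacitly assumes $u\le t+1$; the hypothesis $|G|\ge(t+u+1)|H|$ alone does not guarantee this. The gap is harmless for the paper, because every application of Proposition \ref{strongip} here takes place under the inequality $t\ge u$ established in (\ref{t>u}) in the proof of Theorem \ref{modular}, but the restriction should be made explicit if the proposition is to stand alone.
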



\section{Modular progressions}
\begin{theorem}\label{modular}
Let  $S$ be a  degenerate generating subset of an abelian group $G$ with $0\in S$ and let $H$ be a  hyper-atom of $S$.
Let $\phi :G\mapsto G/H$ denotes
the canonical morphism.
Let $T$ be a  finite  subset of
 $G$  such that  $3-\mu \le |S|\le  \max(4-2\mu,|S|)\le|T|,$ $S+T$ is aperiodic and
  $|S+T|=|S|+|T|-\mu\le  \frac{2|G|+2\mu}3,$ where $0\le \mu \le 1$. Then  one of the following conditions  holds:
 \begin{itemize}
  \item [(i)] $\mu =0$ and $|G|=3|S|=3|T|=4\kappa_2(T^*)=12.$
       \item [(ii)]
  $|\phi(S+T)|=|\phi(S)|+|\phi(T)|-1$ and moreover  $\phi(S)$ and $\phi(T)$ are  progressions with the same
  difference.
 \end{itemize}
\end{theorem}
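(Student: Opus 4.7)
The plan is to pass to the quotient $G/H$ via $\phi$ and combine Kneser's theorem with Corollary~\ref{haapv}. Writing $\overline S=\phi(S)$ and $\overline T=\phi(T)$, we have $\overline{S+T}=\overline S+\overline T$. I first check the hypotheses of Corollary~\ref{haapv}: from $|S|\le|T|$ we obtain $2|S|\le|S|+|T|=|S+T|+\mu\le(2|G|+2\mu)/3+\mu<|G|$, so $|S|<|G|/2$; and $T$ is a proper $2$-fragment of $S$ since $|T|\ge|S|\ge 2$ and $|T^S|=|G|-|S+T|\ge(|G|-2\mu)/3\ge 2$ (small $|G|$ handled separately), giving $\kappa_2(S)\le|T+S|-|T|=|S|-\mu\le|S|$. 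Corollary~\ref{haapv} thus yields that $\overline S$ is either a progression or a Vosper subset of $G/H$.

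Next I would control the stabilizer $K^*$ of $\overline S+\overline T$ in $G/H$ via Kneser's theorem. If $K^*\ne\{0\}$, let $L=\phi^{-1}(K^*)$ so $L\supsetneq H$; then $S+T+L=S+T+H$, and tracking the induced Kneser-type inequality on $|S+L|$ and $|T+L|$, together with a $(T,S,L)$-matching supplied by Proposition~\ref{strongip}, forces $L$ to itself be a $2$-fragment of $S$, contradicting the maximality of $H$ as a hyper-atom. Hence $K^*=\{0\}$ and Kneser gives $|\overline{S+T}|\ge|\overline S|+|\overline T|-1$.

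Finally I split on the structure of $\overline S$. If $\overline S$ is a Vosper subset with $|\overline T|\ge 2$, the Vosper property gives $|\overline{S+T}|\ge\min(|G/H|-1,\,|\overline S|+|\overline T|)$; combined with the easy bound $|\overline{S+T}|\cdot|H|=|S+T+H|\le|S+T|+(|H|-1)|\overline{S+T}|$ and the hypothesis $|S+T|\le(2|G|+2\mu)/3$, a case analysis on the defects $|S+H|-|S|$, $|T+H|-|T|$ and $|S+T+H|-|S+T|$ corners us into the parameters $|G|=12=3|S|=3|T|$ with $\mu=0$, i.e., case~(i). Otherwise $\overline S$ is a progression; I would then apply Lemma~\ref{chowla} to $\overline S+\overline T$ in $G/H$ --- its sum-size and complement-size hypotheses follow from the $2|G|/3$ bound --- to deduce that $\overline T$ is also a progression with the same difference, and the equality $|\overline{S+T}|=|\overline S|+|\overline T|-1$ then follows from the Kneser equality for two parallel progressions.

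The main obstacle is pinning down case~(i) in the Vosper branch: the interplay between the strict Vosper lower bound and the hypothesis $|S+T|\le(2|G|+2\mu)/3$ is delicate, and one must carefully rule out every Vosper configuration in $G/H$ other than the narrow $|G/H|=6$, $|\overline S|=|\overline T|=2$ possibility that forces $|G|=12$ and $\kappa_2(T^*)=3$.
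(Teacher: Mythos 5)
There is a genuine gap, and it sits exactly where the theorem is hard. Your opening move (use $|T|\ge 2$, $|T^S|\ge 2$ to get $\kappa_2(S)\le|S|-\mu$ and then invoke Corollary~\ref{haapv}) matches the paper and is fine. But the second step — ruling out a nontrivial stabilizer $K^*$ of $\phi(S)+\phi(T)$ by showing $L=\phi^{-1}(K^*)$ is a $2$--fragment of $S$ — is asserted, not proved: to contradict the maximality of $H$ you must bound $|S+L|-|L|$ by $\kappa_2(S)=|S+H|-|H|$, and nothing in Kneser's theorem or Proposition~\ref{strongip} hands you that. Worse, the step fails outright in the case that actually produces exception (i): when $\phi(T)=G/H$ the sum $\phi(S)+\phi(T)$ is all of $G/H$, the stabilizer is the whole quotient, $L=G$ is not a fragment, and no contradiction is available. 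That case is not a ``Vosper configuration'' at all ($|\phi(S)|=2$ there, a trivial progression); it is where the paper's entire Case~1 lives, and your claimed exceptional parameters ($|G/H|=6$, $|\overline S|=|\overline T|=2$) do not match the true ones ($|G/H|=3$, $|H|=4$, $|S|=|T|=4$), which suggests the case has not been worked out.

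The third step also does not go through as written. Conclusion (ii) needs the \emph{upper} bound $|\phi(S+T)|\le|\phi(S)|+|\phi(T)|-1$; Kneser only gives the lower bound, and the paper obtains the upper bound by the matching/counting machinery of Claims 1--3 (lower bounds on the sizes of the $H$--components $E_i$ of $S+T$), not from periodicity considerations. Your route to ``$\phi(T)$ is a parallel progression'' via Lemma~\ref{chowla} requires $\min(|\phi(S)|,|\phi(T)|)\ge 3$ and $|\phi(S)|+|\phi(T)|\le|G/H|-4$; the first can fail ($|\phi(S)|=2$ is common), and the second does \emph{not} follow from $|S+T|\le\frac{2|G|+2\mu}{3}$ — translating that bound into a bound on $|\phi(S+T)|$ is precisely what requires the component-size estimates, and even then the paper only reaches $|\phi(S+T)|\le|G/H|-3$ (Claim~4), short of what Lemma~\ref{chowla} demands. (Lemma~\ref{chowla} would in any case only yield that $\phi(T)$ is a near-progression, which would still need upgrading.) The paper instead first proves the modular equality and then reads off the progression structure of $\phi(T)$ from it together with $\phi(S)$ being a progression. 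So the proposal's skeleton is right, but the two load-bearing steps are missing.
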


\begin{proof} Set $|G|=n$, $h=|H|$,
$|\phi(S)|=u+1$, $|\phi(T)|=t+1,$
$|\phi(S+T)|=k+1$ and
$q=\frac{n}{h}$. Take  $H$--decompositions  $T=\bigcup \limits_{0\le i\le t}T_i$ and  $S=\bigcup \limits_{0\le i\le u}S_i$ such
that  $|S_0|\ge \cdots \ge |S_u|$. For $0\le i \le u$, put $K_i=\subgp{S_i^*}.$
We shall also assume (by a suitable reordering) that $|K_0|\ge |K_u|$ in the case where $|S_0|=|S_u|$.
We have $|G|>|S+H|\ge 2|H|$, and hence $|G|\ge 6.$ Therefore $|T^S|\ge \frac{|G|-2\mu}3>1$.
 Since $|S+T| \le |G|-2,$ we have

\begin{equation}\label{connectiv}
uh=|H+S|-h=\kappa _{2}(S)\le |S|-\mu.
\end{equation}

It follows that for all $u\ge
j\geq 0$,
\begin{equation}\label{plein}
(j+1)|S_{u-j}|\ge |S_{u-j}|+\cdots +|S_u|\ge jh+\mu.
\end{equation}

It follows that for  $u\ge
2$
\begin{equation}\label{pleintrois}
|S_0|+|S_{u-1}|\ge\frac{2(|S_0|+|S_{u-1}|+|S_{u}|)}3\ge \frac{2|H|+\mu}3.
\end{equation}

Without loss of generality, we may assume that $0\in S_0$.

 Since $T$ is aperiodic, we have $(t+1)h> |T|\ge |S|\ge \kappa _2(S)=uh$, and hence

\begin{equation}\label{t>u}
t\ge u.
\end{equation}

Choose a (possibly empty) $(T,S,H)$--matching
$\{n_i, i\in J\},$ where $J\subset  [0,t].$ Put $|J|=r$.  Take a $H$--decomposition $S+T=\bigcup _{0\le i \le k}E_i$ such that
\begin{enumerate}
  \item $T_i+S_0\subset E_i$, for all $0\le i \le t$,
  \item $ \bigcup _{i\in J}(T_{i}+S_{ni}) \subset \bigcup _{1\le i \le r} E_{t+i}. $
\end{enumerate}
We also assume that   $ |E_{t+r+1}|\le \cdots \le |E_k| $ if $k\ge t+r+1$.
We shall choose the $H$--decomposition and  $J$  in order to maximize $(r,|E_{k}|)$ lexicographically.

We shall put
$P=\{i \in [0,k] :  |E_i|=h\}$ and $W=\{i \in [0,t] :  |E_i|<h\}.$

Suppose that $k\ge t+r+1$ and take an $s$ with $T_s+S_{\alpha s}\subset E_k$. Therefore $T_s+S_{ns}\subset E_j$, for some $t+1\le j\le t+r$, otherwise   $ J\cup \{s\} $ would give a matching with size $r+1$.
Since $1\le \min (n_s,\alpha s)$ and $n_s\neq \alpha s$, we have $u\ge 2$.
 Now we can choose $\alpha _s\ge u-1$,
otherwise  $ (J\setminus \{j\})\cup \{k\}$ gives a  matching contradicting our choice. In particular, \begin{equation} \label{equpsilon}
  |E_k|\ge |S_{u-1}| \ \mbox{and }\  u\ge 2,\ \mbox{ if}\  k\ge t+r+1. \end{equation}

{\bf Case 1}  $\phi(T)=G/H$.

Thus $t+1=q$.
Let us show that
$$u=1.$$
Suppose $u\ge 2.$ By (\ref{plein}), $|S_0|\ge \frac{uh+\mu}{u+1}\ge \frac{2h+\mu}{3}$. On the other side $$|T+S|
\ge \sum _{i\in [0,t]}|T_i+S_0|\ge q|S_0|\ge q\frac{2h+\mu}{3}\ge  \frac{2n+3\mu}{3}.$$
It follows also that $\mu=0$ and  $|T_i+S_0|=|S_0|=\frac{2h}{3}$, for all $i$. Also $u=2$ and $|S_2|=|S_1|=|S_0|$.
The same thing applies to $S_1$ and $S_2$, and hence $T_i^*+S_s=S_s$, for all $i,s$. Since $S$ is aperiodic
we must have $|T_i|=1,$ for all $i$. Since $T+S=T+S_0=T+S_1$, there are distinct elements $r,s$ with $T_r+S_0=T_s+S_1$.
It follows that $S_1=S_0+w,$ where  $\{w\}=T_r-T_s$. Now we have $T+S=T+S_1=T+S_0+w=T+S+w,$ a contradiction.

Assuming $K_0\neq H,$ we have by (\ref{plein}),
$h\ge 2|S_{0}|\ge |S_{0}|+|S_1|\ge h.$
Thus $$\frac{h}2=|S_{0}|\ge |K_0|\ge  |K_1|\ge |S_1|=\frac{h}2.$$
Thus we must have $S_0=K_0$ and $S_1=K_1+b$, for some $b$.
Since $S$ is aperiodic, we have   $K_0\cap K_1=\{0\}$ and hence $\frac{h^2}4\le h$. In particular, $|K_0|=|K_1|=2$ and $H$ is isomorphic to $K_0\oplus K_1$.
Since $t+1=q$,  we have  $S+T=(T+K_0)\cup (T+K_1+b)$. Then $E_i\supset (K_0+T_i)\cup (K_1+c),$ for some $c$, and hence $|E_i|\ge 3,$
for all $i$.
Therefore $|S+T|\ge 3q=\frac{3n}4>\frac{2n+2}3$, since $n\ge 3h= 12,$ a contradiction. Thus
 $$K_0= H.$$

Put $\rho =\max \{|H|-|T_i| ; i\in P\}.$
Since $|T_i+S_0|<h,$ for every $i\in W,$  we have
by Proposition \ref{Cay}
\begin{eqnarray}|T+S|
&\ge& \sum _{i\in P}|E_i|+\sum _{i\in W}|T_i^*+S_0|
\ge |P||H|+\sum _{i\in W}|T_i|+|W|\frac{|S_0|}{2} \label{-rho}\\
&\ge& |T|+\rho+|W|\frac{|S_0|}{2}.\label{eqrho}
\end{eqnarray}

Since $u=1$, $\phi(S_1)$ generates $G/H$. By a suitable translation of $T$,  we may assume the following:

\begin{enumerate}
  \item $0\in T_0$, and $|T_0|\ge \max \{|T_1|, \cdots ,|T_t|\}.$
  \item $\phi(T_i+S_1)=\phi (T_{i+1})$, for all $0\le i \le t-1$.
\end{enumerate}
Suppose that  $|W|\le 2.$  We have by (\ref{-rho}) and (\ref{plein})
\begin{eqnarray*}|T+S|
&\ge& |P|h+\sum _{i\in W}|T_i+S_0|\\&\ge& (q-|W|)h+|W|(\frac{h+\mu}2)
\\&\ge& (q-2)h+2(\frac{h+\mu}2)\ge \frac{2n}{3}+\mu
\end{eqnarray*}
Hence $q=3$, $\mu =0$ and $|S_0|= \frac{h}{2}$. It follows that $|S_1|= \frac{h}{2}$.  Also we have $|E_i|=|T_i+S_0|=|S_0|$, for all $i\in W$.
It follows that $T_i^*+S_0=S_0$, for all $i\in W$. Hence $|T_i|=1$, for all $i\in W$, since $T+S$ is aperiodic. Since $q=3$ and $|T|\ge 4$,
we have $|T_0|\ge 2$. Thus $P=\{0\}$. Therefore
$|T_0+S_1|=|E_1|=|S_0|=|S_1|$ and $S_1$ is periodic. Now $T+S\supset E_0\cup (T_0+S_1)\cup (T_1+S_1)$,
which a periodic subset of cardinality $\frac{2n}{3},$
a contradiction, proving that   $$|W|\ge 3.$$

Suppose that     $q\neq 3.$
 We must have $|P|=0$, since  otherwise there are  $p\in P$ and $s\in W$ with $T_p+S_1\subset E_s$. But
$h>|E_s|\ge |T_p+S_1|.$
By Lemma \ref{prehistorical}, $|T_p|+|S_1|\le h$, and hence $\rho \ge |S_1|.$
By (\ref{eqrho}), $|T+S|\ge |T|+|S_1|+3\frac{|S_0|}{2}>|T|+|S|,$ a contradiction.

By (\ref{eqrho}), $q=|W|=4.$ Since  $ |T+S_0|\le |T+S|\le |T|+2|S_0|,$ we have
by Proposition \ref{olsonreduction}, $|S_0|=2$ and
$|T+S_1|=|T|+4.$  Therefore $T+S_1=T+S=T+S_0$.
Thus $S_0$ and $S_1$ are aperiodic. Since $2\ge |S_0|\ge \frac{h}2$, we have $3\le h \le 4$. Since $|H|\le 4$, $S_0=S_1+e,$ for some $e$.
Now we have $T+S=T+S_0=T+S_1+e=T+S+e,$ a contradiction.
Therefore $$q=3.$$

We have  $\frac{n}{3}=h\le |S+H|-|H|= \kappa _2|S|\le |S|-\mu\le \frac{n}{3}-\frac{5\mu}6$. Hence  $$ \mu =0 \ \mbox{ and } \ |S|=|T|=h=\frac{n}{3}.$$

The next  step is to show that     $n= 12.$ Since   $7\le \frac{2n}3,$ we have  $n\ge 12$. Suppose that  $n>12$ and hence $h\ge 5$.
Put $L_i=\subgp{T_i^*}$, for $0\le i \le 2$.

Assume first that there is a $j$ with $L_j=H$.
By Theorem \ref{interfrag} and since $h=|T^S|$, the set $T_j=(w+H)\cap T$ (for some $w$) is a $2$--fragment.
In particular $|T_j^*+S|=|T_j^*|+|S|.$

Since $S$ is periodic and by Proposition \ref{olsonreduction}, applied with $Y=T_j^*$, $S_0$ and $S_1$ are  progressions with a same difference.
It follows, since $|S_0|\ge \frac{h}2>2$, that $$|T+S|\ge|T+S_0|= \sum _{i\le 3}|T_i+S_0|\ge |T|+3|S_0|-3\ge |T|+2|S_0|,$$ and hence $|S_0|=|S_1|=3$.
Since $S_0,S_1$ are progressions
 with the same difference and the same cardinality, we have $ S_1=S_0+b$, for some $b\neq 0$.
We have also $|T+S_1|\ge |T|+3|S_1|-3=|T+S|.$
Now we have $T+S=T+S_1=T+S_0+b=T+S+b,$ a contradiction.


 So we may assume that for all $j,$  $L_j\neq H$. Since $S_0$ generates $H$, we have
 \begin{equation}
2|T|\ge|T+S|\ge|T+S_0|
=\sum _{0\le i\le 2}|T_i+S_0|\ge \sum _{0\le i\le 2}
 2|T_i|= 2|T|.\label{EQTJ}
\end{equation}
Hence all the above inequalities are in fact equalities. In particular, $2|T_0|=|T_0+S_0|<h$,
since $0\in W$. We have also $T+S=T+S_0$.

Take an $L_i$--decomposition  $S_0=S_{i0}\cup S_{i1}$. Without loss of generality we may assume $0\in S_{i0}$ and $|S_{i0}| \ge |S_{i1}|$.
From the above inequalities, we have $T_i+S_{i0}=T_{i}$. Since $|S_{i0}|\ge \frac{|S_0|}2\ge \frac{h}4$ and since $|T_i+S_0|=2|T_i|$,
we see that $T_i$ is a single coset with cardinality $\in \{\frac{h}4,\frac{h}3\}$. Since $|T_0|+ |T_1|+ |T_2|=h$,
we have necessarily $|T_0|=|T_1|= |T_2|=\frac{h}3$. At least two of  subgroups $T_0,T_1^*,T_2^*$ have a non-zero intersection
(otherwise $h^3\le 27h$ and we get a contradiction), say $|T_0\cap T_1^*|\ge 2$ (the other cases being similar).

Observe that $|S_0|\le \frac{2h}3-1$, otherwise $S_0+T_0=S_0$, and hence $T+S=T+S_0$ would be periodic, a contradiction. In particular, $|S_1|>\frac{h}3.$

Now $T+S\supset (T_0+S_0)\cup (T_0+S_1)\cup  (T_1+S_1)$, which is  a periodic subset of cardinality $2h=|S+T|$, a contradiction.
So $$n= 12.$$

 Thus $|T_0|=|T_1|+1=|T_2|+1=2.$ Clearly
$T+S\supset (T_0+S_0)\cup (T_0+S_1)\cup (T_1+S_1)\cup (T_2+S_0)$. Since $|S+T|=8$, we have
 necessarily $T_1+S_1=T_2+S_0$. Thus $S_1=S_0+z$, where $\{z\}=T_2-T_1$. So we may write
$S=S_0+\{0,z\}$. Since $T+\{0,z\}$ involves three $H$--cosets and since $P=\emptyset$, we have
$|T+S|=|T+\{0,z\}+S_0|\ge |T+\{0,z\}|+3,$ forcing that $|T+\{0,z\}|=|T|+1$. Hence $\kappa _2(T^*)\le |T|-1$ (observe that $T^*$
generates $G$). We must have $\kappa _2(T^*)=3= |T|-1,$ otherwise $T$ would be periodic by Proposition \ref{Cay}.

{\bf Case 2}    $\phi(T)\neq G/H$, i.e.  $t+1<q$.

{\bf Claim 1} If  $u\ge 2$ then $|P\cap [0,t]|\ge 2.$

Suppose that   $u\ge 2$ and that  there is a $j\in [0,t]$ such that $P\cap [0,t]\subset \{j\}$ and put
$\delta =\max \{|E_i|; t+1\le i \le k\}$.

For all $i\neq j$, we have $|T_i|\le \frac{h}{3},$  since otherwise by (\ref{plein}) and  Lemma \ref{prehistorical}, $|S_0+T_i|=h$, a contradiction.
We have \begin{eqnarray}
|S|+|T|\ge |S+T|&\ge& \sum \limits_{i\in [0,t]\setminus \{j\}}|T_i+S_0|+ |T_j+S_0|+ \sum \limits_{i\in [t+1,k]}|E_i|\nonumber\\
&\ge & 2|T|-|T_j| +\delta+(k-t-1)|S_u|.\label{del}\end{eqnarray}

 Assume  $|T_j|> |S_u|$. By Lemma \ref{prehistorical}, $|S_i+T_j|=h$, for all $0\le i\le u$. Since $P\cap [0,t]\subset \{j\},$
  $\delta = h$ and $k\ge t+2$. By (\ref{del}), we have
 $|S|+|T|\ge |S+T|\ge 2|T|- |T_j|+ h+|S_u|> 2|T|,$
a contradiction, proving that $$|T_j|\le |S_u|.$$

Therefore and  by (\ref{del}), we have
 $|S|+|T|\ge |S+T|\ge 2|T|- |T_j|+ |S_u|.$
 It follows that  the last chain   consists of equalities and hence $T_j+S_0=T_j$ and therefore $|T_j|=h=|S_u|$. In particular, $S$ is periodic,
a contradiction.

Take a $2$--subset $R\subset |0,t]\cap P.$ Put $\gamma =\min \{|E_i| ; t< i < k\}$.  Now we have
\begin{eqnarray}
|S+T|&\ge& \sum \limits_{i\in  R} |E_i|+\sum \limits_{i\in [0,t]\setminus R} |T_i+S_0|+ \sum \limits_{i\in [t+1,k-1]} |E_{i}|+|E_{k}| \nonumber \\
&\ge &2h+(t-1) |S_0|+(k-t-1)\gamma+|S_{u}|.\label{r-}
\end{eqnarray}

{\bf Claim 2} $q\ge u+t+1$.

Suppose the contrary. Then $u\ge 2$.  By Lemma \ref{prehistorical}, $\phi(T+(S\setminus S_u))=G/H$. Hence   $k+1=q$ and $|E_i| \ge |S_{u-1}|$, for all $t+1\le i \le k$.
By (\ref{plein}) and (\ref{pleintrois})
\begin{eqnarray*}
|S+T|
&\ge& 2h+(t-1)|S_0|+(q-t-1)|S_{u-1}|\\
&=&  2h+(2t-q)|S_0|+(q-t-1)(|S_0|+|S_{u-1}|)\\
&\ge& 2h+ (2t-q)\frac{2h}3+\frac{4h(q-t-1)}3=\frac{2n+2h}{3},
\end{eqnarray*}
observing that $q\le  t+u\le 2t,$ a contradiction.

{\bf Claim } 3. $|\phi(S+T)|=|\phi(S)|+|\phi(T)|-1$.

Put $\beta =1$ if $k>r+t$ and $\beta =0$ otherwise.  We have
\begin{eqnarray}
|S|+|T|&\ge&|S+T|= \sum _{0\le i \le k}|E_i|\nonumber\\ &\ge& \sum _{i\in  [0,t]\setminus J}|T_i+S_0|+\sum _{i\in J}|T_i+S_{ni}|+\sum _{i\in  J}|T_i+S_0|+\beta |E_k|\nonumber\\
&\ge& |T|+ r|S_0|+ \beta |S_{u-1}| \label{AP2}
\end{eqnarray}

By Claim 2 and Proposition \ref{strongip}, $r\ge u.$ Suppose  that $u<r$.
By (\ref{AP2}),  $|S|=(u+1)|S_0|$. We have also
$T_i+S_0=T_i$, and hence $|T_i|=h,$  for all $i\in [0,t]\setminus J$.
Also  $T_i+S_{ni}=T_i,$  for all $i\in J$.

Since  $T$ is aperiodic, we have  by (\ref{plein}), $\frac{h}2\le |S_0|=|S_u|\le \frac{h}2$ and $u=1$. Take $l\in J$. It follows also that  $T_l+S_1=T_l$ and $S_0=S_0+T_l=S_0+T_l+S_1$, a contradiction since a generating set with size $<h$ can not have a period with size $\frac{h}2.$
So $r=u$.
Let us show that $k=r+u$. Suppose the contrary.
   By (\ref{equpsilon}), $|E_k|\ge |S_{u-1}|$ and  $u\ge 2$.
By (\ref{AP2}),  $|S|=u|S_0|+|S_{u-1}|$.
By (\ref{plein}),  $\frac{2h}3\le |S_0|=|S_u|$. In particular $|K_i|=h$, for all $i$. By (\ref{AP2}), we have also
$T_i+S_0=T_i$, and hence $|T_i|=h,$  for all $i\in [0,t]\setminus J$,
  $T_i+S_{ni}=T_i$ and hence $|T_i|=h,$ for all $i\in  J$, a contradiction since $T$ is aperiodic.
Thus $k=t+u$.

{\bf Claim 4}  If $u\ge 2$ then $ k\le q-3$.

Since $|\phi (T+S)|=k+1=t+1+u$, we have by Lemma \ref{vominus} that $|\phi (T+(S\setminus S_u)|\ge t+u$, and hence
$\gamma \ge |S_{u-1}|$.

Suppose that $t+u=k \ge q-2.$
  Then $2t+1\ge t+1+u=k+1\ge q-1$. 
 We have by (\ref{r-}), (\ref{plein}) and (\ref{pleintrois})
\begin{eqnarray*}
|S+T|&=& \sum _{i\in R }|E_i|+\sum _{i\in [0,t]\setminus R }|E_i|+\sum _{i\in [t+1,k-1]}|E_i|+|E_k|\nonumber\\
&\ge & 2h+(t-1) |S_0|+(k-t-1)|S_{u-1}|+|S_u|\\
&=& 2h+|S_0|+|S_{u}|+|S_{u-1}|+(2t-k)|S_0|+(k-t-2)(|S_{u-1}|+|S_{0}|)\\
&\ge& 4h+\mu+(2t-k)|S_0|+(k-t-2)\frac{4h}3\\
&=&\frac{2h(k+2)}3+\mu \ge  \mu+\frac{2n}{3},
\end{eqnarray*}
observing that $k= t+u\le 2t.$  It follows that $\mu =0$ and that the last chain consists of equalities. In particular $|S_0|+|S_{u-1}|=\frac{4h}3$
and $|S_0|+|S_{u-1}|+|S_{u}|=2h.$ Hence $|S_{0}|=|S_{u-1}|=|S_{u}|=\frac{2h}3$. It follows also that $|E_{i}|=\frac{2h}3$,
for all $i\in [0,k]\setminus R$. It follows that for all $i$, $|T_i|\le \frac{h}{3},$  since otherwise by  Lemma \ref{prehistorical}, $|S_0+T_i|=|S_1+T_i|=|S_2+T_i|=h$, a contradiction. Thus  $$
|S|+|T|\ge |S+T|\ge \sum \limits_{i\in [0,t]}|T_i+S_0|+ |E_k|
\ge  2|T|+|S_u|,$$ a contradiction.

Assume that  $u\ge 2$. By Claim 4, $q-2 \ge |\phi (S+T)|,$
and hence by  Claim 3, $\phi (S)$ is not a Vosper subset.  By Corollary \ref{haapv}, $\phi(S)$ is a progression for $u\ge 2$.
But  $\phi(S)$   is obviously a progression for $u=1$.

By Claim 3,  $\phi(T)$ is a progression with the same difference as $\phi(S)$
 if $t+1+u=|\phi (S+T)|\le q-1.$ Assume
that $q=t+1+u.$ By Claim 4,  $u=1$ and  $|\phi (T)|=q-1$. Thus $\phi (T)$ is a progression  with arbitrary difference.\end{proof}

\section{The $\frac{2n}{3}$--Theorem}

We start by a lemma converting modular structure into subsets structure.

\begin{lemma}\label{modstr}
Let  $S$ be a  generating subset of an abelian group $G$ with $0\in S$ and let $H$ be a subgroup such that $|S+H|-|H|\le |S|-\mu,$
 where $0\le \mu \le 1$. Let $\phi :G\mapsto G/H$ denotes
the canonical morphism.
  Let $T$ be a  finite  subset of
 $G$  such that  $3-\mu \le |S|\le  \max(4-2\mu,|S|)\le|T|,$  $S+T$ is aperiodic and
  $ |S|+|T|-\mu=|S+T|\le |G|-4+2\mu $. If $\phi(S)$ and $\phi(T)$ are  progressions with a same difference and if $|\phi(G)|\ge |\phi(S)|+|\phi(T)|-1$
then there are $H$--progressions  $S=\bigcup \limits_{0\le i\le u}S_i$ and $T=\bigcup \limits_{0\le i\le t}T_i$  with a same difference
  such that one of the following conditions holds:
\begin{itemize}
\item [(i)] $\mu =0$ and $\{S,T\}$ is an $H$--essential pair.
\item [(ii)] One of the sets $S\setminus S_u$ and $T\setminus T_{t}$ is $H$--periodic and the other is $(H,-\nu)$--periodic. Moreover $|T_{t}+S_u|=|T_{t}|+|S_u|-\nu-\mu,$ where $0 \le \nu \le 1-\mu$.
\end{itemize}
If $G$ is finite, then $|\phi(T^S)|+|\phi(S)|\le |\phi(G)|+1.$
Moreover if $|T^S-S|\le |T^S|+|S|-1$ then $\phi(R)$ and $\phi(S)$ are progressions with the same difference,
  for every subset $R\subset T^S,$ with $|R|\ge |T^S|-1.$
\end{lemma}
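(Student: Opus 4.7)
The plan is to install matched $H$-decompositions from the progression hypothesis, perform a coset-by-coset deficit calculation, and split on whether the $H$-deficit $\alpha = |S+H|-|S|$ of $S$ is strictly less than $|H|$ or equal to $|H|$.

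\textbf{Setup and master identity.} After translation take $0 \in S_0 \cap T_0$ and write $\phi(S) = \{0, d, \ldots, ud\}$, $\phi(T) = \{0, d, \ldots, td\}$ with $H$-decompositions $S = \bigcup_{a=0}^{u} S_a$ and $T = \bigcup_{b=0}^{t} T_b$. The bound $|\phi(G)| \ge u+t+1$ combined with the progression hypothesis gives $|\phi(S+T)| = u+t+1$, with $i = a+b$ having a unique representation at $i \in \{0, u+t\}$; hence the $H$-cosets $E_i = (S+T) \cap (id+H)$ satisfy $E_0 = S_0+T_0$, $E_{u+t} = S_u+T_t$, and $E_i = \bigcup_{a+b=i}(S_a+T_b)$ otherwise. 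Writing $\beta = |T+H|-|T|$, $\delta_a = |H|-|S_a|$, $\gamma_b = |H|-|T_b|$, equating $|S+T+H| = (u+t+1)|H|$ with $|S+T| = |S|+|T|-\mu$ yields the master identity
\[
\sum_{i=0}^{u+t}(|H|-|E_i|) = \alpha + \beta + \mu - |H|.
\]
Lemma~\ref{prehistorical} forces $|E_i| = |H|$ as soon as some pair $(a,b)$ with $a+b = i$ satisfies $\delta_a + \gamma_b < |H|$; this is the main combinatorial lever.

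In Case A, $\alpha < |H|$ (targeting conclusion~(ii)): after possibly swapping the roles of $S$ and $T$, and relabeling so that the large deficits sit at position $u$ on the $S$-side and $t$ on the $T$-side, a careful count using the master identity and the $\delta_a+\gamma_b<|H|$ criterion shows every interior $E_i$ with $i<u+t$ is a full coset, so the entire deficit $\alpha+\beta+\mu-|H|$ concentrates in $E_{u+t} = S_u+T_t$. This yields $|S_u+T_t| = |S_u|+|T_t|-\nu-\mu$ with $0 \le \nu \le 1-\mu$ and, simultaneously, forces $S\setminus S_u$ to be $H$-periodic while $T\setminus T_t$ is $(H,-\nu)$-periodic. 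In Case B, $\alpha = |H|$ (forcing $\mu=0$, targeting conclusion~(i)): the master identity and $|E_i|\le|H|$ force $\beta = |H|$ and concentrate all deficits at the endpoints $\{0,u\}$ on the $S$-side and $\{0,t\}$ on the $T$-side. Matching $(|S_0|,|S_u|,|T_0|,|T_t|)$ against the three defining cases of an $H$-essential pair then identifies the type: all four equal to $1$ (so $|H|=2$) gives type~(i); one endpoint a singleton with the adjacent coset of size $|H|-1$ and the collision $T_{t-1}+S_u = T_t+S_{u-1}$ gives type~(ii); and $H = K_0 \oplus K_1$ with two order-$2$ subgroups absorbing the endpoint deficits gives the Klein type~(iii).

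\textbf{The two extras.} A coset of $\phi(G)$ lies in $\phi(T^S)$ iff it is not fully contained in $S+T$, so $|\phi(T^S)| = |\phi(G)| - f$ where $f$ counts the full $E_i$. The conclusions above give $f \ge u$ (in~(ii) at most $E_{u+t}$ and possibly one more coset are short; in~(i) at most two endpoint cosets are), yielding $|\phi(T^S)|+|\phi(S)| \le |\phi(G)|+1$. For the final clause, the assumption combined with Lemma~\ref{leee} forces $\zeta=1$, hence $|T^S-S| = |T^S|+|S|-1$; applying the lemma to the pair $(-S, T^S)$---applicability follows from the first extra and Lemma~\ref{leee}---identifies $\phi(T^S)$ as a progression with difference $-d$, and the passage from $T^S$ to any $R$ of size $\ge |T^S|-1$ removes at most one extreme coset from $\phi(T^S)$, preserving the progression structure of $\phi(R)$. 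The principal obstacle is the Case~A bookkeeping: ensuring the deficit is absorbed entirely into $E_{u+t}$ rather than dispersed across multiple interior cosets, which is especially delicate when the Kneser stabilizers of $S_0+T_0$ or $S_u+T_t$ are non-trivial subgroups of $H$.
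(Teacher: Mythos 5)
Your outline identifies the right objects (matched $H$-decompositions, the coset-by-coset deficit identity, Lemma~\ref{prehistorical} as the lever), but it has one genuine error and one admitted gap, and both sit exactly where the paper does its real work. The error is the Case~B claim that $\alpha=|S+H|-|S|=|H|$ forces $\beta=|T+H|-|T|=|H|$ and hence an essential pair. It does not: take $G=\Z/25\Z$, $H$ of order $5$, $S=S_0\cup S_1$ with $|S_0|=4$, $|S_1|=1$ (so $\alpha=|H|$) and $T=T_0\cup T_1\cup T_2$ with $|T_0|=|T_1|=5$, $|T_2|=4$ (so $\beta=1$). All hypotheses of the lemma hold with $\mu=0$, the conclusion is alternative~(ii) with $\nu=1$, and $\{S,T\}$ is not an essential pair. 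So your dichotomy $\alpha<|H|$ versus $\alpha=|H|$ does not separate the two conclusions; the paper's dichotomy is instead whether $S_0$ generates $H$ (its Case~1, which is what actually produces the Klein pairs and the $|H|\in\{2,4\}$ analysis you compress into ``matching against the three defining cases''), and, when it does, whether the slack $\epsilon=|T|-(|T+H|-|H|)$ vanishes.

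The gap is the one you flag yourself: in Case~A you assert that ``a careful count'' shows every interior $E_i$ is full, but the master identity plus Lemma~\ref{prehistorical} cannot deliver this on their own, because the hypotheses bound only the $S$-side deficits $\delta_a$ and say nothing a priori about the $\gamma_b$; if several $T_b$ are small, the criterion $\delta_a+\gamma_b<|H|$ fails for every representation of an interior index. The paper closes exactly this hole by two extra inputs you do not have: Proposition~\ref{olsonreduction} applied to $T+S_0$ (with $S_0$ generating $H$) to force $|W|\le 1$, and the inequality chain yielding $|T|\ge |T+H|-|H|+\epsilon$, whence $|T_v|+|T_w|\ge|H|+\epsilon$ for all $v<w$ (equation~(\ref{plein2})), which is what makes Lemma~\ref{prehistorical} bite on every interior coset. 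Without these, the concentration of the deficit into $S_u+T_t$ is unproved, and with them your case split would have to be reorganized along the paper's lines anyway. The two ``extras'' at the end are treated in roughly the paper's spirit, though your final step (removing an extreme coset from $\phi(T^S)$) needs the periodicity of $(S+T)\setminus(S_u+T_t)$ to rule out the deleted element sitting in a middle coset of $T^S$ of size one.
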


\begin{proof}
Take  $H$--progressions $S=\bigcup \limits_{0\le i\le u}S_i$ and $ T=\bigcup \limits_{0\le i\le t}T_i$  with a same difference $d_0$.
Set $K_i=\subgp{S_i^*}$, for $0\le i \le u$.
By  suitable translation and choice of $d_0$,  we may assume that $0\in S_0$,  $|S_0|\ge  |S_u|$ and that $|K_0|\ge |{K_u}|$ if $|S_0|=|S_u|$. For  $U\subset [0,u]$, we have
$ |U||H|- \sum _{i \in U} |S_i| \le |S+H|-|S|\le  |H|-\mu$.
 Thus
\begin{equation}\label{plein1}
\sum _{i \in U} |S_i|\ge (|U|-1) |H|+\mu.
\end{equation}
Take a $H$--decomposition $S+T=\bigcup _{0\le i \le k}E_i$ such that
\begin{enumerate}
  \item $T_i+S_0\subset E_i$, for all $0\le i \le t$;
   \item $T_{t}+S_{i}\subset E_{t+i}$, for all $1\le i \le u$.
\end{enumerate}
Set $P=\{i: |E_i|=|H|\}$ and $W=[0,t]\setminus P$.
Put $h=|H|$ and  $n=|G|=qh$.

{\bf Case 1}  $K_0\neq H$. By (\ref{plein1}), we have $\mu =0$,  $|K_0|=|K_u|=\frac{h}{2}$
and  \begin{equation} \label{noex}|S_i|=h, \ \mbox{for all } \ i\in [1,u-1]. \end{equation}
 Since $S$ is aperiodic, we have $K_0\cap K_u=\{0\}$, and hence  $h\in \{2,4\}$.

{\bf Subcase 1.1}  $h=2$.

We have $|E_i| \ge \max \{ |S_1|, \cdots ,|S_{u-1}|\}=h$, for all $1\le i\le t+u-1$.
Hence \begin{eqnarray*}
|T|+|S|&=&  |T_{0}+S_0|+(t+u-1)h+|T_t+S_u|=|S|+|T_{0}|+(t-1)h+|T_t|,
\end{eqnarray*}
Up to replacing $d_0$ by $-d_0$,
we may assume that  $|T_0|\ge |T_t|$.
Since $T$ is aperiodic we must have $|T_t|=1$.
If $|T_0|=1$, then  $\{S,T\}$ is an $H$--essential pair.  If $|T_0|=2$, then  (ii) holds with $\nu =1$.

{\bf Subcase 1.2}  $h=4$. Since $S$ is aperiodic, we have $
K_0\cap K_u=\{0\}$, and hence $H=K_0\oplus K_u$.
It follows that $|S_i|=h, \ \mbox{for all } \ i\in [1,u-1].$
One may check as in Subcase 1.1, that $|T_i|=h, \ \mbox{for all } \ i\in [1,t-1],$
$T_0^*=K_0$ and that
$T_t^*=K_1.$ Thus $\{S,T\}$ is an $H$--essential pair (a Klein pair).

{\bf Case 2}   $S_0$ generates $H$.

Observe that $|T+S|\ge |T+S_0|+\sum _{1\le i \le u}|E_{t+i}|\ge |T+S_0|+|S|-|S_0|$. Therefore $|T+S_0|\le |T|+|S_0|$.
Assume that  $|W|\ge 2.$ By Proposition \ref{olsonreduction}, $\mu =0,$ $|T+S_0|=|T|+|S_0|$, $|S_0|=2$, $|W|=2$ and $|T_i|=h$, for all $i\notin W$.
Moreover $T_i$ is a progression with a same difference as $S_0,$  for every $i\in W$.
Observe that for every  $i\in  W\setminus \{0\}$, we have  $i-1\in W,$ otherwise  $|H|=|T_{i-1}|$ and  $|H|=|T_{i-1}+S_1|\le |E_i|,$ a contradiction.
So $W=\{0,1\}$. 
We must have $t=1$, otherwise $|E_i|\ge \min (|T_i|,|T_t|)=h$, for all $i\ge 2.$ It follows that $|S+T|\ge (t+u-1)h+|T_0+S_0|+|T_1+S_0|\ge |T|+2+uh$ and hence $|S_u|=h$, a contradiction.
Since $T_0$ and $T_1$ are   progression with a same difference as $S_0,$ we have
 \begin{eqnarray*}|S+T|&\ge& |T_0+S_0|+|T_1+S_0|+|T_1+S_1|\\&\ge& |T_0|+|S_0|-1+|T_1|+|S_0|-1+|S_1|+|T_1|-1\\&\ge& |T|+|S|+|T_1|-1.
\end{eqnarray*}
Therefore $|T_1|=1$. Since $|S_0|\ge \frac{h}2$, we have $3\le h \le 4$. Since $|T_0+S_0|<h$, we have $|T_0|\le 2$. Therefore  $|T|\le 3$, a contradiction.
Thus  $|W|\le 1.$

Take an $r\in [0,t]$ such that $W\subset \{r\}$. We have
\begin{eqnarray*}
|S|+|T|&=& |E_r|+\sum _{ i\in [0,t]\setminus \{r\}}|E_i|+ \sum _{1\le i\le u}|E_{t+i}|\\
&\ge&|T_r+S_{0}|+th+  \sum _{1\le i\le u}|S_{i}+T_t|\\
&\ge & |S_{0}|+th+ \sum _{1\le i\le u}|S_{i}|
\end{eqnarray*}
 Hence for some $\epsilon \ge 0,$ we have
\begin{equation} \label{epsilo}
|T+H| -|H|\ge |T|+\epsilon.
\end{equation}

{\bf Subcase 2.1} $\epsilon = 0$.

Then the last chain consists of equalities.
In particular $E_r=S_0+T_r^*=S_0$ and $ |E_{t+i}|=|S_i+T_t|= |S_i|$, for all $1\le i \le u$.
 Let us show that \begin{equation}S_0+T_t^*=S_0.\label{pers0}\end{equation}
Assuming the contrary, we have necessarily $r\neq t$ and $|S_0|<h$. Since $S_0$ generates $H$, the period of $S_0$ has order
$<\frac{h}2$. In particular $|T_r|<\frac{h}2$. By (\ref{epsilo}), $|T_t|>\frac{h}2$, and hence $K_t=H$. Since $S_u+T_t=S_u$, we have
$|S_0|\ge |S_u|=h$, a contradiction.

It follows that $S+T_t=S$ and hence $|T_t|=1$. By (\ref{epsilo}), $|T_i|\ge h-1$, and hence $|E_i|\ge
|T_i+S_0|\ge h$, for $i\neq t$. Thus $r=t$ if $W\neq \emptyset$.

Assume first that  $W\neq \emptyset$. Thus  $W=\{t\}$.
We must have $|S_1|=1,$  otherwise $|E_t|\ge |S_1+T_{t-1}|\ge h,$  by Lemma \ref{prehistorical}, a contradiction.
We have $u=1$, otherwise $|S_0|+|S_1|+|S_u|\ge 2h.$ Thus $|S_0|+|S_1|\ge 2|H|-1$. Since $|S_0|\ge |S_u|$, we have $|S_0|=|H|$ and hence $|E_t|=h,$
a contradiction.
Hence $\{S,T\}$ is an essential pair.

Assume now that  $W=\emptyset$.
We must have $|S_u|=1$, otherwise $|E_i|\ge h,$ for all $0\le i \le t+u-1$ and hence  $|S+T|\ge (t+u)h+|S_u|>|S|+|T|$, since $\epsilon=\mu=0$.
We must have $|T_{t-1}|=h-1$, otherwise $|E_i|\ge h$, for all $t+1\le i \le t+u-1$ and hence  $|S+T|\ge (t+u)h+|S_u|>|S|+|T|$, since $\epsilon=\mu=0$.
Similarly  $|S_{u-1}|=h-1$. Since $\epsilon=\mu=0$, we have $|S_i|=h,$ for all $0\le i \le u-2$ and $|T_i|=h,$ for all $0\le i \le t-2$.
Therefore $\{S,T\}$ is an essential pair.

{\bf Subcase 2.2} $\epsilon \ge 1$.

By (\ref{epsilo}) for all $v<w$,
\begin{equation} \label{plein2}
|T_{v}| +|T_w|\ge |H|+\epsilon \ge |H|+1.
\end{equation}

Take $1\le r \le u-1$. Clearly $E_{t+r}\supset (T_t+S_r)\cup (T_{t-1}+S_{r+1}).$
By (\ref{plein1}) and (\ref{plein2}), we have $|T_t|+|S_r|+|T_{t-1}|+|S_{r-1}|\ge 2h+1.$
Then either $|T_t|+|S_r|>h$ or $|T_{t-1}|+|S_{r+1}|>h.$
 By Lemma \ref{prehistorical}, $ |E_{t+r}|= h.$ Therefore $[0,t+u-1]\subset P.$

 Put $\nu = (u+t)h-|S\setminus S_u|-|T\setminus T_t|$.
 We have now
\begin{eqnarray*}
|S|+|T|-\mu &=&|S+T|\\
&=& \sum _{0\le i\le t+u} |E_i|+ |T_t+S_u|\\
&=& (t+u)h+ |T_t+S_u|=|S\setminus S_u|+|T\setminus T_t|+\nu+|S_t+T_u|.
\end{eqnarray*}
Therefore $|T_t+S_u|=|T_t|+|S_u|-\mu -\nu$.
Since $[0,t+u-1]\subset P,$ $S_t+T_u$ is aperiodic. By Kneser's Theorem $\mu+\nu \le 1$.

Suppose  now that $G$ is finite.
 Since $T+S$ involves full cosets except for the extremities,  $\phi(T^S)$ is  a progression with the same difference as $\phi(S).$
By Lemma \ref{modstr}, $S\setminus S_u$ and  $T\setminus T_{t}$ are $(\mu-1)$--periodic.

  Assume that  $|\phi(T^S)|+|\phi(S)|\ge q+2$. Clearly there is a $v$ such  that  $S\setminus (S_u\cup S_v)$ is periodic and $|S_v|\ge |H|-2.$ Thus
 $|\phi(T^S)|+|\phi(S\setminus (S_u\cup S_v))|\ge q-1$. Thus $|G|-|T|\ge |T^S-S|\ge |T^S-(S\setminus (S_u\cup S_v))|+|S_v|\ge (q-1)|H|+|H|-2= |G|-2$, a contradiction. Suppose that $|T^S-S|\le |T^S|+|S|-1.$ It follows that $\kappa _2(S)\le |S|-1$ and hence $|H+S|-|S|<|H|$. Therefore $\{S,T\}$ is not an
 elementary pair. Thus (ii) holds. In particular
$(S+T)\setminus (S_u+T_u)$ is periodic. Hence $\phi(R)$ is a progression with the same difference as $\phi(S).$\end{proof}

\begin{proofof}{Theorem}{\ref{twothird}}
Suppose that  (i) does not  hold.
By  Theorem \ref{modular}, $\phi (S), \phi (T)$ are  progressions with the same difference and $|\phi (S+T)|=|\phi (S)|+|\phi (T)|-1.$
Take  $H$--progressions $S=\bigcup \limits_{0\le i\le u}S_i$ and $ T=\bigcup \limits_{0\le i\le t}T_i$  with a same difference.
 Since  $S$ is degenerate,  $|G|>2|H|$ and hence $|G|\ge 6.$
 Therefore $|S+T|\le \frac{2|G|+2}3<|G|-1,$
and thus
$u|H|=|H+S|-|H|=\kappa _{2}(S)\le |S|-\mu.$
By Lemma \ref{modstr}, (ii) holds or (iii) holds.\end{proofof}

\section{The non-degenerate case}
\subsection{Few lemmas}

Suppose that $0\in S$ and that $G=\subgp{S}$. Then clearly
 $1\le \kappa_1(S) \le \cdots \le \kappa _k(S)$.
If $\kappa _k (S) =
 \kappa _{k-1}(S)$,
 then  every $k$--fragment is a
 $(k-1)$--fragment. Also every $(k-1)$--fragment $F$, with $k\le |F|$ and $k \le |F^S|$ is a $k$--fragment.

The above trivial observation will be used extensively in this section.

Our strategy consists in replacing a set with its $3$--atom or $4$--atom. We need to show that
non-degeneracy is preserved by this operation.
\begin{lemma}\label{aperiodicatom}
Let $S$ be a finite generating non-degenerate  subset of an abelian group $G$ such
that $0 \in S$, $\kappa _2 (S)=|S|\le \frac{|G|-4}2.$
Let $0\in F$ be a $2$--fragment of $S$ with $|F|\ge 3$ and $|F^S|\ge 4$. Then
\begin{itemize}
\item[(i)] A proper $2$--fragment of  $S$ contains no nonzero coset,
\item[(ii)] $F+S$ is  aperiodic and $F$ generates $G$,
  \item[(iii)] Assume that  $|F|\le 4$ and that $|F|+|S|>6$. Then
 $F$ is non-degenerate.
 \item[(iv)] If $A$ is a $3$--atom of $S$
with $|A|\ge 4,$  then $|A|=4$ and $\kappa _2(A)=|A|$.
 \end{itemize}
\end{lemma}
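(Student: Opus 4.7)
The plan is to prove (i)--(iv) in order, with (i) carrying most of the weight and the remaining parts being organized case-analyses that repeatedly reduce to (i).

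For (i), I would argue by contradiction. Let $Y$ be a proper $2$--fragment of $S$ containing a coset $y+H$ with $|H|\ge 2$; translating, assume $0\in Y$ and $H\subseteq Y$. The goal is to exhibit a subgroup which is a $2$--fragment of $S$, contradicting the non-degeneracy assumption. Since $Y$ is a $2$--fragment and $\kappa_2(S)=|S|$, one has $|Y+S|=|Y|+|S|$. Letting $\Pi$ denote the period of $Y+S$, Kneser's theorem yields the identity $(|Y+\Pi|-|Y|)+(|S+\Pi|-|S|)=|\Pi|$. Using $H\subseteq Y$ and the fact that $|Y|\le |Y^S|$ and $|S|\le (|G|-4)/2$, I would deduce either (a) $\Pi\supseteq H$, in which case $Y+S$ is $H$--periodic and the boundary estimate $|H+S|-|H|\le |S|$ combined with the bound $|H^S|\ge 2$ (guaranteed by the size hypothesis on $S$) makes $H$ itself a $2$--fragment of $S$; or (b) the translates $Y-h$, $h\in H$, are proper $2$--fragments intersecting $Y$ in at least $|H|\ge 2$ elements, so iterating Theorem \ref{interfrag} over $h\in H$ produces an $H$--periodic $2$--fragment containing $H$, from which the previous argument again exhibits a subgroup $2$--fragment. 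In either case non-degeneracy of $S$ is contradicted. The delicate step is verifying the hypothesis $|(Y\cup (Y-h))+S|\le |G|-2$ of Theorem \ref{interfrag}; here properness and the lower bound on $|Y^S|$ are used.

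For (ii), the aperiodicity of $F+S$ is proved the same way: if $F+S$ has nonzero period $K$, Kneser gives $(|F+K|-|F|)+(|S+K|-|S|)=|K|$, so one of three cases holds. If $S+K=S$, then $K\subseteq S$ and $|\partial_S(K)|=|S|-|K|<|S|=\kappa_2(S)$, contradicting $2$--separability of $K$ (whose hypotheses $|K|\ge 2,|K^S|\ge 2$ are guaranteed by $|S|\le(|G|-4)/2$). If $F+K=F$, then $F$ contains a nonzero coset, forbidden by (i) once one checks $F$ is proper; if $F$ is not proper, apply the argument to the negative fragment via Lemma \ref{negative}. The mixed case makes $K$ itself a $2$--fragment of $S$. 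For $\langle F\rangle=G$, assume $0\in F$ and $K=\langle F\rangle\neq G$; then $F+S\subseteq K+S$ which is $K$--periodic, but $F+S$ is aperiodic by the preceding step. Decomposing $F+S$ across $K$--cosets, the sum $\sum_C\bigl(|F+(S\cap C)|-|S\cap C|\bigr)=|F|$ together with Kneser's bound inside each coset (and the aperiodicity, ruling out periodic summands via (i) applied to $F$) forces $|\phi(S)|\le 1$, i.e.\ $S\subseteq K$, contradicting $\langle S\rangle=G$.

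For (iii), suppose $|F|\le 4$, $|F|+|S|>6$, and $F$ is degenerate with hyper-atom $L$. By (ii), $\langle F\rangle=G$, so $L\lneq G$ and $2\le |L|$. The boundary bound $|L+F|-|L|=\kappa_2(F)\le |F|-\mu\le 4$ combined with $|F|\le 4$ forces $|L|$ into a short list of small values; a case-analysis on $|L|\in\{2,3,4\}$ either exhibits a nonzero coset inside $F$ (forbidden by applying (i) to $F$, which is a $2$--fragment of $S$) or yields a subgroup $2$--fragment of $S$ (contradicting non-degeneracy of $S$). The size hypothesis $|F|+|S|>6$ is used precisely to rule out the small exceptional configurations.

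For (iv), let $A$ be a $3$--atom of $S$ with $|A|\ge 4$. Since $\kappa_2(S)=|S|\le\kappa_3(S)$ and any $3$--fragment satisfies $|A|\ge 3\ge 2$ and $|A^S|\ge 3\ge 2$, any such $3$--fragment is automatically a $2$--fragment once $\kappa_3(S)=\kappa_2(S)$; this equality is verified by exhibiting a concrete $4$--element $3$--fragment $\{0,a,b,a+b\}$ built from a $2$--atom $\{0,a\}$ of $S$ (which has size $2$ by Theorem \ref{k=d} combined with non-degeneracy) and an appropriately chosen $b$. Minimality of $|A|$ among $3$--fragments then gives $|A|\le 4$, hence $|A|=4$. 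Applying (iii) with $F=A$ yields $A$ non-degenerate (the hypothesis $|A|+|S|>6$ holds since $|S|\ge 3$), and Theorem \ref{k=d} applied to the non-degenerate $2$--fragment $A$ concludes $\kappa_2(A)=|A|$.

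The main obstacle will be (i): extracting an honest subgroup $2$--fragment from the bare presence of a nonzero coset inside a $2$--fragment requires combining Kneser's theorem with the intersection calculus of Theorem \ref{interfrag}, and the bookkeeping around the case split on the period of $Y+S$ must be managed carefully so that the size hypotheses on $|S|$ and $|Y^S|$ guarantee $2$--separability of the candidate subgroup at each step.
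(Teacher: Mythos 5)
Your plan diverges from the paper's proof at several points, and at least two of the divergences are genuine gaps rather than alternative routes. The most serious one is in (iii). You propose to handle a degenerate $F$ by a case analysis on the order of its hyper-atom $L\in\{2,3,4\}$, claiming each case ``either exhibits a nonzero coset inside $F$ or yields a subgroup $2$--fragment of $S$''. But $L$ is only known to be a $2$--fragment of $F$; nothing in your sketch converts the bound $|L+F|-|L|\le|F|$ into the bound $|L+S|-|L|\le|S|$ that is needed to contradict non-degeneracy of $S$, and a configuration such as $|F|=4$, $|L|=2$, $|L+F|=6$ produces neither a full coset inside $F$ nor any visible fragment of $S$. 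The paper closes exactly this hole by invoking Theorem \ref{twothird} with the degenerate set $F$ played against $S$ (the hypotheses $|F|+|S|>6$ and $|S|\le(|G|-4)/2$ are used to get $|F+S|\le 2|G|/3$), whose conclusion delivers $|S+H|-|H|\le|S|$ for the hyper-atom $H$ of $F$; that is the missing link, and it is not obtainable by an elementary count on $|L|$.

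Secondly, in (i) your case (a) asserts that if the period $\Pi$ of $Y+S$ contains $H$ then ``the boundary estimate $|H+S|-|H|\le|S|$ \dots\ makes $H$ itself a $2$--fragment''; this estimate is never derived and does not follow ($H+S$ lying inside the $H$--periodic set $Y+S$ only bounds $|H+S|$ by $|Y|+|S|$). What periodicity of $Y+S$ actually yields, via $\kappa_2(S)=|S|$, is that $Y$ itself is $H$--periodic, after which you still owe an argument; the paper's is a minimality device: a minimal proper $2$--fragment $X$ containing a nonzero subgroup $Q$ satisfies $X=X+y$ for every $y\in Q$ by Theorem \ref{interfrag} plus minimality, and then $X=X+x$ for every $x\in X$ by the same two lines, so $X$ is a subgroup and non-degeneracy is contradicted directly --- no appeal to the period of $Y+S$ is needed. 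Finally, in (iv) the existence of a four-element $3$--fragment of the form $\{0,a,b,a+b\}$ is asserted but not constructible from the hypotheses (you would need some $b\neq 0$ with $|(\{0,a\}+S)\cap(b+\{0,a\}+S)|\ge|S|$, which is not guaranteed), and $\kappa_2(A)=|A|$ is not a consequence of Theorem \ref{k=d}; the paper instead bounds $|A|$ by playing Theorem \ref{interatoms} (which gives $|A\cap(A+x)|\le 2$ for $x\neq 0$) against a $2$--atom of $A$, and obtains $\kappa_2(A)=|A|$ from Proposition \ref{ejcf} together with that same intersection bound.
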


\begin{proof} Suppose that (i) is false and take a minimal  proper $2$--fragment  $X$ containing a nonzero subgroup $Q$.
Take a  $y\in  Q$.  We have
 $ |(X+y)\cap X|\ge |Q |\ge 2.$  By  Theorem \ref{interfrag}, $(X+y)\cap X$ is a $2$--fragment (clearly a proper one).
 By the minimality of $X$, we have $X=X+y$. Therefore $X+Q=X$.
 Since $X$ is not a subgroup, there is an $x$ with  $x+ X\neq X$. Observe that $X\cap (x+X)$
is $Q$--periodic. We have
 $ |(X+x)\cap X|\ge |Q +x|\ge 2.$  By  Theorem \ref{interfrag}, $(X+x)\cap X$ is a $2$--fragment.
By the minimality of $X$, we have $(X+x)\cap X=X$, and hence  $X+x=X$, a contradiction. This proves (i).

Let us show that $F+S$ is aperiodic. Suppose that $F+S+Q=F+S$, for some nonzero subgroup $Q$.
By the definition of $\kappa _2,$ we have $$|F|+|S|=|F+Q+S|\ge |F+Q|+\kappa _2(S)=|F+Q|+|S|.$$ It follows that $F=F+Q$ is periodic.
 By (i), $ |F|>|F^S|$. By Lemma \ref{negative},
 $-F^S=G\setminus (F+S)$ is a proper  periodic
$2$--fragment, a contradiction.

 Put $N=\subgp{F}$ and $s|N|= |S+N|$. Assume that $s>1$.
 By Proposition \ref{olsonreduction},(iv),  $ \frac{|G|}2> |S|\ge (s-1)|N|$. If follows that $S+N\neq G$ and that $|N+S|-|N|\le |S|=\kappa _2(S)$. Thus $N$ is a $2$--fragment of $S$, a  contradiction.

Suppose that (iii) is false.
Since $|G|\ge |F|+|S|+4\ge 11$ and $|G|$ is composite,
we have  $|F+S|\le \frac{|G|-4}2+4\le \frac{2|G|}3.$

By   Theorem \ref{twothird},   $|S+H|-|H|\le |S|$. Thus $H$ is a $2$--fragment of $S$,
 a contradiction.

 Clearly, we may assume that $0\in A$.  By (ii), $A$ is aperiodic and $A$
 generates $G$. Since $|A+S|\le |A|+|S|$, we have $\kappa _2(A)\le |A|.$
 Let $H$ be a $2$--atom of $A.$

 Suppose that $|A|\ge 5.$
 Assume first that $|H|>2$  and take a $3$--subset $\{0,z,z'\}$ of $ H.$
 By Theorem \ref{interatoms}, $|A\cap (A+x)|\le 2,$ for every $x\neq 0$. Thus 
 \begin{eqnarray*} \kappa _2(A)+|A|&\ge& |H|+|A|\ge 2+\kappa _2(A)\\&=& |A+\{0,z,z'\}|\ge |A|+|A|-2+|A|-4=3|A|-6,\end{eqnarray*}
and hence  $$2|A|\le \kappa _2(A)+6.$$

 Suppose that $ \kappa _2(A)\le |A|-1.$ By Proposition \ref{Cay}, $|H|\le \kappa _2(A)\le |A|-1\le 4.$
 By
 Theorem \ref{k=d}, $H$ is a subgroup. Take a $H$--decomposition  $A=\bigcup \limits_{0\le i\le
 t}A_i$. Since $|H|\le 4$ and by (i), $|A_i|\le 2,$ for every $i$. Hence $u\ge 2$ and thus $6\le u|H|=\kappa _2(A),$ a contradiction, proving that 
 $ \kappa _2(A)= |A|.$ It follows that $S$ is $2$-fragment of $A$. Since $S$ is non-degenerate, there is an $r$ such that $\{0,r\}$ is a $2$--atom of $S$. Take a minimal $2$--fragment $R\subset S$ of $A$ such that $|\{0,r\}+R|=|R|+2$ and $|R|\ge 3 $ (note that $S$ is a such fragment).
 Clearly $|R\cup (r+R)|\le |G|-2.$ By Theorem \ref{interfrag}, $R\cap (r+R)$ is a $2$--fragment  of $A$ such that $|R\cap (r+R)|=|R|-2.$
 It follows that $|R|\le 4.$ Thus $|A|>|R|,$ a contradiction proving that
 $  |A|\le 4.$

  Thus $|H|=2,$
  say $H=\{0,z\},$ for some $z$. Since $A$ is aperiodic we have by Theorem \ref{interatoms}, $|A\cap (A+z)|\le 2$. Hence
$$ 2+|A|\ge 2+\kappa _2(A)= |A+\{0,z\}|\ge |A|+|A|-2=2|A|-2,$$
and hence  $|A|\le 4,$ a contradiction.

By (iii), $A$ is non-degenerate.  Assume that $\kappa _2 (A)\le |A|-1$. There is an $r$
such that $A$ is an $r$--progression by Proposition \ref{ejcf}, and hence $|A\cap (A+r)|=3,$ contradicting Theorem \ref{interatoms}.\end{proof}

We recall that the arcs of Cayley graphs defined on a group $G$ by a subset $S$
are usually colored by the elements of $S\setminus \{0\}$. It will be helpful  to have this image
in mind.  However we assume  no knowledge of Cayley graphs.

Put  $E=\{(x,y)\in A\times A : x-y\in S \setminus \{0\}\}$.
The family $\{x-y; x-y\in S \setminus \{0\}\}$ will be called the {\em family of  colors} present in $A$.

\begin{lemma}\label{colors}
Let $S$ be a finite  subset of an abelian group $G$ such
that $0 \in S$ and $\kappa _3(S)=|S|.$ Let $F$ be a $k$--fragment with $|F|\ge k+1$
and let $a\in F+S$ be such that $|(a-S)\cap F|=1,$ say $(a-S)\cap F=\{b\}$.
Then $F\setminus \{b\}$ is a $k$--fragment.

 Let $0\in A$ be a $k$--atom of $A$
with   $|A|\ge k+1$.
Put $S^\ast=S\setminus \{0\}$ and $E=\{(x,y)\in A\times A : x-y\in S^\ast\}$.
Then
\begin{itemize}
  \item[(1)] {For every} $x\in A+S,$ $|(x-S)\cap A|\ge 2$.
  \item[(2)]  $|A|\le |E|=\sum _{x\in A} |(x-S^\ast)\cap A|\le (|S|-1)|A|-2\kappa_k(S).$
  \item[(3)] There is a nonempty subset  $R\subset E$ {such that} $\sum _{(x,y)\in R} (x-y)=0.$
\end{itemize}

\end{lemma}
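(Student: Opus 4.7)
The plan is to prove the preamble (single-element reduction for fragments) first and then deduce (1)--(3) as quick corollaries. For $F' := F\setminus\{b\}$ I would verify the $k$-fragment conditions directly. Size is fine: $|F'|=|F|-1\ge k$. The hypothesis $(a-S)\cap F=\{b\}$ rules out $a\in F'+S$, because $a-s\in F'\subset F$ for some $s\in S$ would force $a-s=b$, contradicting $a-s\in F\setminus\{b\}$. Thus $F'+S\subseteq(F+S)\setminus\{a\}$, giving $|(F')^S|\ge|F^S|+1\ge k+1$ and
\[|\partial_S(F')|=|F'+S|-|F'|\le(|F|+\kappa_k(S)-1)-(|F|-1)=\kappa_k(S).\]
Since $F'$ is $k$-separable, this must be an equality by the definition of $\kappa_k(S)$, so $F'$ is a $k$-fragment.

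Assertion (1) is then immediate from the atom property: if some $x\in A+S$ had $|(x-S)\cap A|=1$ with unique witness $b$, then $A\setminus\{b\}$ would be a $k$-fragment with $|A\setminus\{b\}|<|A|$, violating the minimality of $A$. For the lower bound in (2), note that $0\in S$ puts $x\in(x-S)\cap A$ for every $x\in A$, and (1) supplies a second element there, so $|(x-S^\ast)\cap A|\ge 1$; summing over $x\in A$ yields $|E|\ge|A|$. For the upper bound I would count $E$ by the difference $s$, getting $|E|=\sum_{s\in S^\ast}|A\cap(A-s)|=(|S|-1)|A|-\sum_{s\in S^\ast}|\{y\in A:y+s\notin A\}|$. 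Swapping the order of summation, the subtracted term becomes $\sum_{z\in\partial_S(A)}|(z-S^\ast)\cap A|$; since $z\notin A$ the $S^\ast$ and $S$ versions coincide, and (1) applied to each $z\in\partial_S(A)\subset A+S$ forces each summand to be $\ge 2$, giving the bound $\ge 2\kappa_k(S)$.

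For (3), the lower-bound computation actually shows that every $x\in A$ has out-degree at least one in the digraph $(A,E)$; following outgoing arcs from any starting vertex produces a walk in a finite vertex set, which must eventually repeat and so contains a directed cycle $x_{i_0}\to\cdots\to x_{i_r}=x_{i_0}$. Taking $R$ to be the arc set of this cycle gives the telescoping identity $\sum_{(x,y)\in R}(x-y)=0$, and $R$ is nonempty because loops are ruled out by $0\notin S^\ast$. The only conceptually substantive step is the preamble; the rest is bookkeeping, and the only point requiring care is the $S$-versus-$S^\ast$ reconciliation in the upper bound of (2), which hinges on boundary elements lying outside $A$.
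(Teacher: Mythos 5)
Your proposal is correct and follows essentially the same route as the paper: the same inclusion $(F\setminus\{b\})+S\subset (F+S)\setminus\{a\}$ for the fragment reduction, the same minimality-of-the-atom argument for (1), the same double count of arcs leaving $A$ (yours organized by the difference $s$ rather than by endpoint, but it is the identical identity) for (2), and the same directed-cycle/telescoping argument for (3). You also make explicit two points the paper leaves implicit, namely the deduction of (1) from the preamble and the reconciliation of $S$ with $S^{\ast}$ on boundary vertices.
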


\begin{proof}
We have
$(F\setminus \{b\})+S\subset ((F+S)\setminus\{a\})\cup \{b\} $, and hence $F\setminus \{b\}$ is
 a $k$--fragment.

Bounding   the total number of arcs inside $A$ or reaching $\partial (A)$ from $A$ by the number of arcs leaving $A$, we have using (1),
$|A||S^\ast|\ge \sum _{a\in \partial (A)}|(a-S^\ast)\cap A|+ \sum _{a\in A}|(a-S^\ast)\cap A|\ge 2\kappa_k(S)+|A|$, and (2) follows.

 In the graph induced by $A$, every vertex receives an arc colored by an element of $S^\ast$, by (1).
Since  $A$ is finite, $A$ must contain a directed cycle, $R=\{(a_1,a_2),(a_2,a_3)  \cdots , (a_j, a_{j+1})\}$, with
$a_{j+1}=a_1$.
 We have 
 
 $\sum _{(x,y)\in R} (x-y)=\sum _{1\le i \le j} (a_{i+1}-a_i)=a_{j+1}-a_1=0$.\end{proof}

Now we prove the optimality of the $4$--atom of a subset of size $3$.
\begin{lemma}\label{34}
Let $S$ be a finite generating non-degenerate subset of an abelian group $G$ such
that $0 \in S,$  $ | S |=3$ and $\kappa _4 (S)=\kappa _2 (S)=|S|.$
Let $0\in A$ be a $4$--atom of $S$. Then $A$ is non-degenerate and $|A|=4$.
  \end{lemma}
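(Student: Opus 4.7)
The plan is to bound $|A|$ by comparing it against a $3$-atom of $S$, and then to invoke Lemma \ref{aperiodicatom} to obtain non-degeneracy once the size has been pinned down. Since $\kappa_2(S)\le \kappa_3(S)\le \kappa_4(S)$ and $\kappa_2(S)=\kappa_4(S)=3$, the middle term is trapped at $\kappa_3(S)=3$, so every $4$-fragment of $S$ is automatically a $3$-fragment and a $2$-fragment (the size thresholds $|F|,|F^S|\ge 4$ exceed the required $\ge 3\ge 2$). The $4$-atom $A$ satisfies $|A|,|A^S|\ge 4$ and $|A+S|=|A|+3$, forcing $|G|\ge 11$, so the standing hypotheses of Lemma \ref{aperiodicatom} hold; part (ii) of that lemma gives $A+S$ aperiodic and $A$ generating $G$.

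Let $B$ be a $3$-atom of $S$ containing $0$. By Lemma \ref{aperiodicatom}(iv), $|B|\in\{3,4\}$. If $|B|=4$, then Lemma \ref{negative} gives $|B^S|\ge |B|=4$, so $B$ is itself a $4$-fragment and the minimality of the $4$-atom forces $4\le |A|\le |B|=4$, hence $|A|=4$. Since $|A|+|S|=7>6$ and $|A|=4$, Lemma \ref{aperiodicatom}(iii) then delivers non-degeneracy of $A$, closing this case.

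The remaining case $|B|=3$ is where the main work lies. Here $|B+S|=6$, and I would first verify that $B+S$ is aperiodic: any nontrivial period $K$ of $B+S$ would, via Kneser's Theorem applied to $|B+S|=6$, force $|K+S|-|K|\le 3=\kappa_2(S)$; in the equality case $K$ is a subgroup $2$-fragment of $S$ (contradicting non-degeneracy, as $|K^S|\ge 2$ follows from $|G|\ge 11$), and strict inequality contradicts $\kappa_2(S)=3$. Lemma \ref{3plus3} then reduces the configuration to one of: $S$ is a progression, $B$ is a progression, or $B=c+S$. A $3$-term progression $S=\{0,r,2r\}$ is eliminated immediately, because either $\kappa_2(S)\le 2$ (contradicting the hypothesis) or $3r=0$, in which case $S$ is a subgroup and hence degenerate.

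To finish, suppose for contradiction $|A|\ge 5$. The first statement of Lemma \ref{colors} precludes the existence of any $a\in A+S$ with $|(a-S)\cap A|=1$, since otherwise $A\setminus\{b\}$ would be a $4$-fragment of strictly smaller size than $A$; so $|(a-S)\cap A|\ge 2$ for every $a\in A+S$. Part (2) of Lemma \ref{colors} then yields $|A|\le |E|\le (|S|-1)|A|-2\kappa_4(S)=2|A|-6$, forcing $|A|\ge 6$. The hardest step is to rule out $|A|\ge 6$: combining the cycle decomposition of the $S^{\ast}$-labelled Cayley graph on $A$ read off from Lemma \ref{colors}(3) with the two surviving alternatives from Lemma \ref{3plus3} (namely $B$ a progression, or $B=c+S$) and with the fact that $A$ generates $G$, one pushes the additive relations contributed by the cycles, together with the rigid structure of $B$, to produce a proper subgroup that is a $2$-fragment of $S$, contradicting non-degeneracy. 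Once $|A|\le 4$ has been forced, $|A|=4$ and Lemma \ref{aperiodicatom}(iii) again delivers non-degeneracy of $A$.
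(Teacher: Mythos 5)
Your reduction is sound up to a point: trapping $\kappa_3(S)=3$, handling the case where the $3$--atom has size $4$, and the counting via Lemma \ref{colors}(2) showing that $|A|\ge 5$ forces $|A|\ge 6$ are all correct. But the proof stops exactly where the real difficulty begins. The elimination of $|A|\ge 6$ is only described as an intention (``one pushes the additive relations contributed by the cycles\dots to produce a proper subgroup that is a $2$--fragment of $S$''), with no indication of which subgroup arises, why it is proper, or why it is a $2$--fragment. Moreover, the detour through the $3$--atom $B$ and Lemma \ref{3plus3} does not visibly feed into that contradiction: $A$ is a $4$--atom of $S$, not of $B$, so the rigid structure of $B$ places no direct constraint on $A$. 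As written, the central case is a gap, not a proof.

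The paper disposes of all of $|A|\ge 5$ at once by a shorter boundary argument that never mentions the $3$--atom. Write $S=\{0,u,v\}$, translated so that the order of $u-v$ is maximal. By Lemma \ref{colors}(1) every element of $\partial(A)$ receives both colors, so $\partial(A)\subset (A+u)\cap (A+v)$; since two distinct translates of the $4$--atom meet in at most $3$ elements (Theorem \ref{interatoms}) and $|\partial(A)|=\kappa_4(S)=3$, equality holds, and the intersection of these two $2$--fragments makes $\partial(A)$ itself a $2$--fragment of size $3$. From $\partial(A)+u=\partial(A)+v=\partial(\partial(A))$ one gets that $u-v$ has order $3$; comparing with $S-u$ and $S-v$ and the extremality of the order of $u-v$ bounds the orders of $u$ and $v$ by $3$ as well, whence $|G|\le 9$, contradicting $4$--separability. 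To complete your write-up you must either supply the missing subgroup construction for $|A|\ge 6$ or replace that step with an argument of this kind; once $|A|=4$ is established, your appeal to Lemma \ref{aperiodicatom}(iii) for non-degeneracy is fine.
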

\begin{proof}
We shall assume that $S=\{0,u,v\}$ is translated  in order to maximize the order of  $u-v$.
Suppose to the contrary that $|A|\ge 5$.  By Lemma \ref{colors},(1),
every element of $\partial (A)$ the colors $u$ and $v$. Thus $(\partial (A)+\{u,v\})\cap \partial(A)=\emptyset$.
Also we have  $\partial (A) \subset (A+u)\cap (A+v)$.
By Theorem \ref{interfrag},  $3\ge |(A+u)\cap (A+v)|\ge |\partial (A)|$, and hence $\partial (A) = (A+u)\cap (A+v)$.
By Theorem \ref{interfrag},  $\partial (A)$ is a $2$-fragment of $S$.
 It follows that  $\partial (A)+u=\partial(A)+v=\partial ((\partial (A))$.
Thus $u-v$ has order $=3$. By considering $S-u$ and $S-v$ and the minimality of the order of $u-v$, we see that
the orders of $u$ and $v$ are at most $3$. Since $S$ generates $G$, we have $|G|\le 9$, contradicting the $4$-separability of $S$.
 \end{proof}

\subsection{Proof of Theorem \ref{non=near}}

The case $\mu=1$ follows by Proposition \ref{ejcf}. So we may take $\mu=0$ and $\kappa _2(S)=|S|$.

{\bf Claim 1} If $|S|=4$ then $S$ is a near-progression.

Put $A_0=S$.
Let $A_1$ denotes a $3$--atom of $S$ and let $A_2$ denotes a $4$--atom of $A_1$ such that $0\in A_1\cap A_2$. Suppose that $\min(|A_1|,|A_2|)\ge 4$.
By Lemmas \ref{aperiodicatom},  $A_1$ and $A_2$ are non-degenerate generating subsets with $|A_1|=|A_2|=4,$ and $\kappa _2(A_1)=\kappa _2(A_2)=4.$
It follows that $A_0$ is a $3$--atom of $A_1$ (inducing a symmetry between $A_0$ and $A_1$). By Theorem \ref{k=d}, there is an $r$ such that $\{0,r\}$ is a $2$-atom of $A_1$ and hence $|A_1+\{0,r\}|=|A_1|+2.$ Therefore there is a $u$ such that $\{0,u\}+\{0,r\}\subset A_1-a,$ for some $a\in A_1$. By  suitably translating $A_1$, we may assume that $u\neq -r$ (otherwise we replace $A$ by $A+r$) and that $a=0.$ By the definition of $\kappa _2,$ for every  $x\in \{u,r\},$ we have $|A_2+\{0,x\}|\ge |A_2|+2.$ We must have $x=r,$
since otherwise $A_0$ contains two $r$--arcs and two $u$--arcs. But the total number of
arcs colored by elements of $A_1\setminus \{0\}$ is $4$, by Lemma \ref{colors}. Thus the sequence  $\{r,r,u,u\}$ represents  the family of colors inside $A_0.$
By Lemma \ref{colors}, there is  a nonempty subfamily $R$ summing to $0$.
 We have $|R|\neq 2$,  since  $2r\neq 0$ and $2u\neq 0$ by Lemma \ref{aperiodicatom}.
We have $|R|\neq 4$, since otherwise $2(u+a)=2u+2a=0$, and $S$ would contain the coset $\{0,a+b\}$, contradicting Lemma \ref{aperiodicatom}.
It follows that $|R|=3$. Without loss of generality we may assume $R=\{r,r,u\}.$
Therefore $2r+u=0$. Thus $A=\{0,u,-2u,-u\}$, and hence $A'=\{0,u,2u\}$ is a $3$-atom of $A$, a contradiction.
Thus $$\{0,r,2r\}\subset A_1.$$

Let us show that $|A_0+\{0,r\}|=|A_0|+2.$ Assuming the contrary, we have $|A_0+\{0,r\}+\{0,r\}|\le |A_0+\{0,r\}|+1.$
In particular $A_0+A_1$ is the union  of full $r$--cosets  and an $r$--progression. Thus
$$|A_0+A_1|+1 \ge  |A_0+A_1+\{0,r\}|=|A_0+\{0,r\}|+|A_1|\ge |A_0|+|A_1|+2,$$ a contradiction.
Thus $\{0,v\}+\{0,r\}\subset A_0-b,$ for some $b\in A_0$ and some $v$. As for $A_1,$ we see that we may take $b=0$
and $v=r$. It follows that $(2r-A_0)\cap A_1\supset \{0,r,2r\}.$ By Lemma \ref{colors},(1) and (2)
$5\le \sum_{x\in A_1} |(x-A_0^\ast)\cap A_1|\le 3|A_1|-2\kappa_2(A_0)=4,$ a contradiction.
Thus there is an $i\in \{0,1\}$ such that $A_i$ has a $3$--atom $M$ with $|M|=3$. Put $A_i=T.$
By Lemma \ref{aperiodicatom}, $M$ is non-degenerate and $\subgp{M^*}=G$.

Note that  $M$ is not a near-progression, otherwise by successive applications Lemma \ref{chowla}, we see that $S$ is a near-progression
and the Claim holds. By Proposition \ref{ejcf}, $\kappa _2(T)\ge |T|$. It follows that $T$ is a $2$--fragment of $M.$

Clearly $ \sum_{x\in T+M} |(x-M)\cap T|\le |T||M|=12$. Thus there is $c\in M+T$ such that $|(c-M)\cap T|=1$. By Lemma \ref{colors},
$T$ contains a $2$--fragment $F$ of $M$ with $|F|=3$. Observe that  $F$ is not a progression,
otherwise $M$ would be a near-progression.

By Lemma \ref{3plus3}, $M=F+z,$ for some $z.$ By translating suitably $M$, we may assume that $M\subset T$.
Now $8\le |T+T|\le |T+M|+1\le 2|T|=8.$

By Theorem \ref{k=d}, there is an $r$ such that $\{0,r\}$ is a $2$-atom of $T$ and hence $|T+\{0,r\}|=|T|+2.$ Therefore there is a $u$ such that $\{0,u\}+\{0,r\}\subset T-a',$ for some $a'\in T$. Without loss of generality, we ay assume that $a'=0$ and that $u\ne -r$(otherwise we replace $T$ by $T+u$).

{\bf Case 1} $u=r$.  Put
$T=\{0,r,2r,w\}.$ We have $T+T=\{0,r,2r,3r,4r\}\cup \{w,w+r,w+2r\}\cup \{2w\}$.  We can not have $2w\in \{w,w+r,w+2r\}$.
Then $2w\in \{0,r,2r,3r,4r\}$. We can not have $2w\in \{0,2r,4r\}$, otherwise $T$ would contain a non-zero coset contradicting  Lemma \ref{aperiodicatom}. Then $2w\in \{r,3r\}$.
If $2w=r$, then  $T=\{0,w,2w,4w\}$ a contradiction.  So  we must have $2w=3r$, and hence
 $T-r=\{-r,0,r,w-r,\}=\{-2(w-r),0,2(w-r),w-r\}$. Therefore  $T$ is a $(w-r,-1)$--progression, a contradiction.

{\bf Case 2} $u\neq r$ and hence $T=R\cup \{0\},$ where $R=\{u,v,u+v\}.$

One may see easily using Lemma \ref{aperiodicatom} that $R\cap (-T)=\emptyset$. It follows that $|R\cap (R+R)|\ge 2$.
Without loss of generality we may take $u\in R+R=(u+R)\cup (u+v+R)\cup (u+v+\{0,v\})\cup \{2v\}.$ Using Lemma \ref{aperiodicatom}, we see that $T$ is a near-progression, a contradiction.

We shall now prove the theorem:

Assume first that $|S|\ge 4$ and let $A$ be $3$-atom of $S$. If $|A|=4,$ then by Lemma \ref{aperiodicatom}, $\kappa _3(A)=|A|$.
By Claim 1, $A$ is a near-progression. By  Lemma \ref{chowla}, $S$ is a near-progression. Suppose the contrary. By Lemma \ref{aperiodicatom},
$|A|=3$. Clearly  $|G|\ge |A+S|+4\ge 12$. Let $A'$  denotes  a $4$-atom of $A$. By Lemmas \ref{aperiodicatom} and \ref{34}, $A'$
is non-degenerate and $|A'|=4.$ By Claim 1, $A'$ is a near-progression. By  Lemma \ref{chowla} applied twice, $A$ and $S$ are   near-progressions.

Assume $|S|=3$. Let $A'$  denotes  a $4$-atom of $S$. By Lemmas \ref{aperiodicatom} and \ref{34},$A'$
is non-degenerate and $|A'|=4.$ By Claim 1, $A'$ is a near-progression. By  Lemma \ref{chowla}, $S$  is  a near-progressions.\rule{1mm}{2mm}\medskip

\section{The ($n-4$)--modular Theorem}

We shall now describe the structure if $|S+T|\le |G|-4$.

Let $S$ be a finite subset of an abelian group $G$. A subgroup $H$ is said to be a {\em super-atom} of $S$  if either $H=\subgp{S^*}$ or
 $H$ is a hyper-atom of $\subgp{S^*}$.
\begin{theorem}\label{supermodular}
Let  $S$ and $T$  be finite   subsets of an abelian group $G$ generated by $S^*\cup T^*$.
Also assume  that   $3-\mu \le |S|\le  \max(4-2\mu,|S|)\le|T|,$ $S+T$ is aperiodic and
  that $|S+T|=|S|+|T|-\mu\le |G|-4+2\mu,$ where $0\le \mu \le 1$. Then  one of the following conditions  holds:
 \begin{itemize}
  \item [(i)]  $S$ and $T$ are $(r,\mu-1)$--progressions for some $r$.
       \item [(ii)] There is a subgroup $H$ such that
  $|\phi(S+T)|=|\phi(S)|+|\phi(T)|-1$ and moreover  $\phi(S)$ and $\phi(T)$ are  progressions with the same
  difference if $\min \{|\phi(S)|, |\phi(T)|,  |\phi(G)|-|\phi(S+T)|\}\ge 2$,
 \end{itemize}
  where    $\phi:G\mapsto G/H$ is the canonical map. Moreover $H$ is a super-atom of $S$ or $T^S$ if $|G|\neq 12.$
\end{theorem}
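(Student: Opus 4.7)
The plan is to split the proof by whether $\subgp{S^*}=G$, and in that case by whether $S$ is degenerate, with the degenerate case further split by whether $|S+T|\le\tfrac{2|G|+2\mu}{3}$ (small regime, where Theorem~\ref{twothird} applies) or $|S+T|>\tfrac{2|G|+2\mu}{3}$ (large regime, handled via duality on the pair $(-S,T^S)$).

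If $\subgp{S^*}\ne G$, take $H:=\subgp{S^*}$, a super-atom of $S$: then $|\phi(S)|=1$, $\phi(S+T)=\phi(T)$, so $|\phi(S+T)|=|\phi(S)|+|\phi(T)|-1$ with the min-condition of (ii) vacuous. So assume $\subgp{S^*}=G$. If $S$ is non-degenerate, $T$ witnesses $\kappa_{3-\mu}(S)\le|S|-\mu$ (since $|T|,|T^S|\ge 4-2\mu\ge 3-\mu$ and $|T+S|-|T|=|S|-\mu$); the auxiliary $\kappa_4(S)\le|S|$ for $|S|=3,\mu=0$ follows from $|T|,|T^S|\ge 4$; and $|S|\le\tfrac{|S|+|T|}{2}\le\tfrac{|G|-4+3\mu}{2}\le\tfrac{|G|+5\mu-4}{2}$. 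Theorem~\ref{non=near} then declares $S$ an $(r,\mu-1)$-progression with $\subgp{r}=G$; for $\mu=0$, Lemma~\ref{chowla} transfers this to $T$, while for $\mu=1$ a direct Vosper-type argument (or Proposition~\ref{ejcf} applied to $T$) forces $T$ to be an $r$-progression too, giving (i).

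If $S$ is degenerate, let $H$ be a hyper-atom of $S$, a super-atom of $S$. In the small regime Theorem~\ref{twothird} yields either the $|G|=12$ exception (matching the moreover-clause exception), an $H$-essential pair, or the $H$-progression structure of item (iii); in each non-exceptional case $\phi(S),\phi(T)$ are progressions of a common difference in $G/H$ and $|\phi(S+T)|=|\phi(S)|+|\phi(T)|-1$, so (ii) holds. In the large regime, set $R=T^S$: by Lemma~\ref{negative} and Lemma~\ref{leee}, $R-S$ is aperiodic with $|R-S|=|R|+|S|-\zeta$ for some $\zeta\in\{0,1\}$, and $|R^{-S}|=|T|+\zeta-\mu$. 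In the large regime $|T|>\tfrac{2|G|+5\mu}{6}$, so $|R-S|=|G|-|T|+\mu-\zeta<\tfrac{2|G|}{3}+\tfrac{\mu-6\zeta}{6}\le\tfrac{2|G|+2\zeta}{3}$ (as an integer), placing the dual pair $(-S,R)$ in the small regime for parameter $\zeta$. Rerunning the preceding analysis on $(-S,R)$ (swapping roles if $|R|<|S|$) yields an outcome for the dual pair with super-atom of $-S=S$ or of $R=T^S$; the relation $|R^{-S}|=|T|+\zeta-\mu$ shows $T$ and $(R^{-S})^{-S}$ differ by at most one element, allowing the structure to be transferred back to $(S,T)$.

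The main obstacle is the reverse dualization: passing near-progression, essential-pair, or $H$-progression conclusions from $(-S,R)$ back to $(S,T)$ requires careful handling of the at-most-one-element discrepancy $|R^{-S}|-|T|=\zeta-\mu\in\{-1,0,1\}$, and matching the common difference across the duality. A secondary point is the integer-bound verification $|R-S|\le\tfrac{2|G|+2\zeta}{3}$ in the large regime when $|G|$ is near the threshold (particularly $|G|=12$, where the moreover-clause exception must be correctly invoked); and a minor point is the $\mu=1$ case of the non-degenerate branch, where Lemma~\ref{chowla}'s hypothesis $|A|+|B|\le|\subgp{A}|-4$ does not cover the whole range $|S+T|\le|G|-4+2\mu=|G|-2$, requiring a direct Vosper-type argument to force $T$ into a progression of the same difference.
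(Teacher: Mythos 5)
Your overall strategy is the paper's: dispose of the case $\subgp{S^*}\ne G$, use Theorem \ref{non=near} plus Lemma \ref{chowla} when $S$ is non-degenerate, apply Theorem \ref{modular} directly when the sumset is small, and dualize through $X=T^S$ via Lemma \ref{leee} when it is large. But the two places you flag as ``obstacles'' are exactly where the proof lives, and you leave them open. First, the reverse dualization is not an unsolved difficulty: it is the final clause of Lemma \ref{modstr}. Applying Theorem \ref{modular} and then Lemma \ref{modstr} to the pair $(-S,X)$ gives that $\phi(R)$ and $\phi(S)$ are progressions with the same difference for \emph{every} $R\subset X^{-S}$ with $|R|\ge |X^{-S}|-1$; since $T\subset (T^S)^{-S}=X^{-S}$ and $|T|=|X^{-S}|-\zeta+\mu\ge |X^{-S}|-1$ by Lemma \ref{leee}, the set $T$ itself qualifies and the structure transfers with the difference preserved. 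Without citing that clause your argument stops one step short of the conclusion. Second, when $|T^S|<|S|$ you must also treat the subcase where $(T^S)^*$ generates a proper subgroup $Q$: there ``rerunning the preceding analysis'' on $(-S,T^S)$ gives information about $\phi(-S)$ and $\phi(T^S)$ only, and the paper instead argues directly that $T^S$ lies in a single $Q$-coset, invokes Scherk's Theorem to bound the number of occupied cosets of $S+T$, and reads off the conclusion; your quotient argument for $\subgp{S^*}\ne G$ does not transpose to this situation. Relatedly, in this subcase the theorem's $H$ is a super-atom of $T^S$ rather than of $S$, which is why that alternative appears in the statement; your plan should make the non-degeneracy of $T^S$ explicit (the paper gets it from Lemma \ref{chowla} ruling out $T^S$ being a near-progression, then Theorem \ref{non=near}).

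Two smaller remarks. The paper's case split is by comparing $|S|$ and $|T|$ with $|T^S|$ (so that whichever pair among $(S,T)$ and $(-S,T^S)$ is retained automatically satisfies $|S|+|\cdot|\le\frac{2|G|+2\mu}{3}$, resp.\ $\frac{2|G|+2\zeta}{3}$), which avoids the integrality bookkeeping you need for your split at $|S+T|\lessgtr\frac{2|G|+2\mu}{3}$; your arithmetic does survive by integrality, but the paper's split is cleaner and also isolates the $|G|=12$ exception correctly (Subcase $|T|\le|T^S|$, where $\kappa_2(T^*)=3$ forces $T$ degenerate by Proposition \ref{ejcf} and one reruns Theorem \ref{modular} with a hyper-atom of $T$). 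Finally, on the $\mu=1$ issue with Lemma \ref{chowla}: you are right that its hypothesis $|A|+|B|\le|\subgp{A}|-4$ is not guaranteed in the range $|S+T|\le|G|-2$; the paper's own text elides this as well, so your instinct that a separate Vosper/progression argument is needed there is sound, but it still needs to be written out rather than announced.
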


\begin{proof} Without loss of generality we may take $T=T^*$ and $S= S^*.$ Assume first that $S$ generates a  proper subgroup $K$ (  not containing $T$ necessarily) and let
$T=\bigcup \limits_{0\le i\le t}T_i$ be a $K$--decomposition. Put $W=\{i : |T_i+S|<|K|\}$.

By Proposition \ref{olsonreduction},  $W=\{v\},$ for some $v$.
Put $\nu = t|K|-|T\setminus T_v|$.
We have $$|T|+|S|-\mu=|T+S|=t|K|+|T_t+S|=|T\setminus T_t|+\nu+|T_t+S|.$$

Thus $|T_t+S|=|T_t|+|S|-\mu-\nu.$ Since $T+S$ is aperiodic, $T_t+S$ is aperiodic. By Kneser's Theorem,
$\mu+\nu \le 1.$ Hence (1) holds with $H=K$.

Assume now that $S$ generates $G$.
  If $S$ is a near-progression, the result holds by Lemma \ref{chowla}.
  So we may assume that  $S$ is not a   near-progression.
Put $X=T^S$ and $Y=(T^S)^{-S}=G\setminus (X-S).$  By Lemma \ref{leee}, $X-S$ is aperiodic and there is
$0\le \zeta \le 1$ with $|X-S|= |X|+|S|-\zeta$.

 {\bf Case 1} $|S|\le |T^S|$.
 We have $|S|\le  \frac{|T|+|S|}2\le  \frac{|T+S|+\mu}2\le \frac{|G|+\mu-4}2.$
  By Theorem \ref{non=near}, $S$ is  degenerate.
 Let $H$ be a hyper-atom of $S$ and put $q|H|=|G|$.

{\bf Subcase 1.1} $|T|\le |T^S|.$ Hence $|S|+|T|\le \frac{2|G|+2\mu}3$. The result holds by Theorem \ref{modular} unless $|G|=3|T|=12
 =4\kappa_2(T).$  By Proposition \ref{ejcf}, $T$ is degenerate.  The result holds by Theorem \ref{modular} with $H$ denoting a hyper-atom of $T$.

 {\bf Subcase 1.2} $|S|\le |T^S|< |T|$ and  hence $|S|+|T^S|\le \frac{2|G|+2\mu}3$.

In particular $|G|> 12$, if $|S|=4$.
By Lemma \ref{leee}, $X-S$ is aperiodic and  $|X-S|=  |X|+|S|-\zeta$.
By Theorem \ref{modular}, $\phi(S)$ and $\phi(X^{-S})$ are progressions with the same difference.
By Lemma \ref{modstr}, $|\phi(S)|+|\phi(X^{-S})|\le q+1.$ The result holds if $T=X^S$. Suppose the contrary. By Lemma \ref{leee},
 $\zeta =1$. Then  $|T|=|X^{-S}|-1$. By Lemma \ref{modstr},  $\phi(T),\phi(S)$ are progressions with a same difference.

 {\bf Case 2} $|T^S|< |S|$.

Assume first that $X^*$ generates a proper subgroup $Q$ and put $|G|=q'|Q|$. Take
$Q$-decompositions   $T=\bigcup \limits_{0\le i\le t}T_i$ and $S=\bigcup \limits_{0\le i\le u}S_i$.
Since $X$ is contained in a
single coset, say $X\subset T_t+S_u$, the other $Q$-cosets are all contained in $T+S$. By Theorem \ref{scherk}, we have
 $t+u+1\le q'$. Hence  $|T|+|S|-\mu=|T+S|\ge (t+u)|Q|+|T_t|+|S_u|-1$.
In this case (i) holds or (ii) holds.

Assume now that $X$ generates $G$. Since $|X-S|\le |X|+|S|,$ $X$ can not be a near-progression  by Lemma \ref{chowla}.
By Theorem \ref{non=near}, $X$ is degenerate.
Let $N$ be a hyper-atom of $X$ and let $\psi:G\mapsto G/N$ be the canonical morphism.
By Theorem \ref{modular}, $\psi(S)$ and $\psi(X^{-S})$ are progressions with the same difference.
Also $|\psi(S)+\psi(X^{-S})|\le q'+1.$ The result holds if $T=X^{-S}$. Suppose the contrary. By Lemma \ref{leee},
 $\zeta =1$. Then  $|T|=|X^{-S}|-1$. By Lemma \ref{modstr},  $\psi(T),\psi(S)$ with a same difference.\end{proof}

\section{The ($n-3$)--Structure Theorem}
\begin{theorem}\label{n-3}
Let  $S$ and $T$  be finite   subsets of an abelian group $G$ generated by $S^*\cup T^*$.
Assume moreover that   $3-\mu \le |S|  \le |T|,$ $S+T$ is aperiodic and
  that $|S+T|=|S|+|T|-\mu \le |G|-3-\mu,$ where $0\le \mu \le 1$. Then  one of the following conditions  holds:
 \begin{itemize}
 \item[(i)]$|S|=3=3+\mu$ and there is an $a$ such that  either $T=a+S$ or  $T=G\setminus (-a-2S)$.
  \item [(ii)]  $S$ and $T$ are $(r,\mu-1)$--progressions for some $r$.
  \item [(iii)] $\mu =0$ and $\{S,T\}$ is an $H$--essential pair.
  \item [(iv)]There exist a subgroup $H$ and two    $H$--decompositions  $S=\bigcup \limits_{0\le i\le u}S_i$ and $T=\bigcup \limits_{0\le i\le t}T_i$  ($H$--progressions with a same difference if $\min \{|\phi(S)|, |\phi(T)|,  |\phi(G)|-|\phi(S+T)|\}\ge 2$)
  such that  one of the sets $S\setminus S_u$, $T\setminus T_{t}$ is $H$--periodic and the other is $(H,-\nu)$--periodic, and $|T_{t}+S_u|=|T_{t}|+|S_u|-\nu-\mu,$ where $0 \le \nu \le 1-\mu$.
 Moreover
  $|\phi(S+T)|=|\phi(S)|+|\phi(T)|-1,$
 \end{itemize}
  where    $\phi:G\mapsto G/H$ is the canonical map and  $H$ is a super-atom of $S$ or $T^S$ if $|G|\neq 12.$
\end{theorem}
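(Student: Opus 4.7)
The proof splits on whether $|S+T|$ already meets the stronger hypothesis of Theorem~\ref{supermodular} or saturates the bound $|G|-3-\mu$. If $|S+T|\le |G|-4+2\mu$, I apply Theorem~\ref{supermodular}: its first alternative says $S,T$ are $(r,\mu-1)$-progressions, which is our (ii); its second alternative supplies a subgroup $H$ with progressions $\phi(S),\phi(T)$ of the same difference in $G/H$ satisfying $|\phi(S+T)|=|\phi(S)|+|\phi(T)|-1$. These are exactly the hypotheses of Lemma~\ref{modstr}, so feeding them in produces (iii) or (iv), in complete analogy with the way Theorem~\ref{twothird} is deduced from Theorem~\ref{modular}. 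The initial decomposition step of Theorem~\ref{supermodular} also covers the case where $S$ or $T$ generates a proper subgroup $K$, landing in (iv) with $H=K$.

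It remains to handle the boundary case $\mu=0$, $|S+T|=|G|-3$, which forces $|X|=3$ where $X:=T^S$. By Lemma~\ref{leee}, $X-S$ is aperiodic, $|X-S|=|X|+|S|-\zeta$ for some $\zeta\in\{0,1\}$, and $T\subset(T^S)^{-S}=G\setminus(X-S)$ with $|(T^S)^{-S}|=|T|+\zeta$. When $|S|\ge 4$, I verify that $(X,-S)$ satisfies the hypotheses of Theorem~\ref{supermodular} with parameter $\zeta$: the only nontrivial point is $|X-S|=|S|+3-\zeta\le|G|-4+2\zeta$, equivalent to $|S|\le|G|-7+3\zeta$, which follows from $|S|\le(|G|-3)/2$ together with $|G|\ge 2|S|+3\ge 11$. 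Applying Theorem~\ref{supermodular} to $(X,-S)$ either yields near-progression structure for $X$ and $-S$ with matching difference, whence $T\subset G\setminus(X-S)$ is itself an $(r,-1)$-progression (exactly if $\zeta=0$, off by one element if $\zeta=1$) giving (ii); or it yields a modular decomposition for $(X,-S)$ which, combined with Lemma~\ref{modstr} and the inclusion $T\subset(T^S)^{-S}$, delivers (iii) or (iv).

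When $|S|=|X|=3$, I apply Lemma~\ref{3plus3} to $(X,-S)$. The alternatives are: (C) $X=a-S$ for some $a$, or (A) $S$ is an $r$-progression, or (B) $X$ is an $r$-progression. In (C), $(T^S)^{-S}=G\setminus(a-2S)$; if $\zeta=0$ this gives $T=G\setminus(a-2S)$, which is (i) with $-a$ in place of $a$; if $\zeta=1$ then $|2S|=5$ forces $S$ to be a progression, reducing to (A). In (A) (or (B) by symmetry), if $|G|\ge 10$ then Lemma~\ref{chowla} applied to $(-S,X)$ promotes $X$ to a near-$r$-progression; the resulting $X-S$ is a length-$5$ or length-$6$ $r$-progression, and $T\subset G\setminus(X-S)$ is then an $(r,-1)$-progression, giving (ii). The residual case $|G|=9$ forces $|S|=|T|=3$, and Lemma~\ref{3plus3} applied directly to $(S,T)$ gives either $T=a+S$, which is (i), or a member of $\{S,T\}$ a progression, which propagates to (ii) via Lemma~\ref{chowla}.

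The super-atom refinement for $H$ in (iv) follows in each sub-case from how $H$ is produced: either $H=\langle S\rangle$ or $H=\langle T^S\rangle$ places us in the first clause of the super-atom definition, or $H$ is a hyper-atom of the ambient group arising from Theorem~\ref{modular} on the degenerate input guaranteed by Theorem~\ref{non=near}. The main obstacle is the $|S|=3$ boundary analysis, where one must enumerate the few near-$r$-progression patterns for the three-element set $X$ and verify that the corresponding complement $T$ is exactly an $(r,-1)$-progression; this is routine but required to close the last gap between the regime handled by Theorem~\ref{supermodular} and the saturation $|S+T|=|G|-3$.
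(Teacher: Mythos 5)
Your overall route is the paper's own: for $|S+T|\le|G|-4+2\mu$ reduce to Theorem \ref{supermodular} followed by Lemma \ref{modstr}, and at the saturated boundary $\mu=0$, $|S+T|=|G|-3$ pass to the three-element set $X=T^S$, control $X-S$ with Lemma \ref{leee}, and apply Lemma \ref{3plus3} to the pair $(X,-S)$ to reach alternative (i). Your treatment of the boundary sub-case $|S|\ge4$ (feeding $(X,-S)$ back into Theorem \ref{supermodular}) is actually more explicit than the paper, which only runs the Lemma \ref{3plus3} argument there.

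Two steps, however, do not go through as written. First, your opening reduction silently assumes the hypotheses of Theorem \ref{supermodular} hold whenever $|S+T|\le|G|-4+2\mu$; but that theorem requires $\max(4-2\mu,|S|)\le|T|$, so for $\mu=0$ and $|S|=|T|=3$ with $|G|\ge 10$ --- an unsaturated configuration that your boundary analysis never reaches, since you only treat $|S|=|T|=3$ when $|G|=9$ --- it does not apply. That case must be handled head-on by Lemma \ref{3plus3} applied to $(S,T)$ itself (giving (i) or (ii)), which is exactly the paper's ``Assume first $|T|=3$'' step. Second, in your sub-case (C) with $\zeta=1$ you assert that $|2S|=5$ forces $S$ to be a progression; this is false: $S=\{0,u,v\}$ with $2u=0$ and $2v\notin\{0,u\}$ satisfies $|2S|=5$ with $2S$ aperiodic, yet $S$ is not a progression. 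So the branch in which $T$ is a proper subset of $G\setminus(a-2S)$ is not closed; you would need either to show that this configuration cannot arise under the theorem's hypotheses or to place it in (ii)--(iv) by a separate argument. Apart from these two holes the argument is sound and consistent with the paper's.
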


\begin{proof}
The result holds by Theorem \ref{n-3} and Lemma \ref{modstr} if $\mu =1.$ Assume tat $\mu =0.$
The result holds by Theorem \ref{n-3} and Lemma \ref{modstr} if $|T|\ge 4$ and $|S+T|\le |G|-4$.
Assume first $|T|=3.$ By Lemma \ref{3plus3}, either (ii) holds or $T=a+S$, for some $a$. Assume now that $|T|\ge 4$ and  that $|T^S|=3.$
By Lemma \ref{leee}, $|T^S-S|=|T^S|+|S|-\zeta, $ for some $0\le \zeta \le 1.$


Suppose that one of the sets   $T^S$ and $S$  is an $r$--progression (and therefore  the other is a near--$r$--progression). Thus $R-S$ is an $r$--progression
 and the result holds. Otherwise by Lemma \ref{3plus3},
 there is an $a$ such that $R=-a-S$. Hence
$T=G\setminus (R-S)=R=G\setminus (-a-2S)$.\end{proof}

A partition $A = A_1 \cup A_0$ is said to be a  {\em quasi--$H$--periodic } partition if $A_0+H=A_0$ and $A_1$ is contained in some $H$--coset.

T

\begin{corollary}(Kemperman Structure Theorem \cite{davkem})\label{KST}
Let A and B be finite subsets of an abelian group $G$  such that  $|A + B| =
|A| + |B|-1\le |G|-2$ and   $A + B$ is aperiodic.
Then there are  a subgroup $H$ and quasi--$H$--periodic partitions  $A = A_0 \cup A_1$ and $B = B_0 \cup B_1$
  such that  $|B_{1}+A_1|=|A_{1}|+ |B_1|-1.$ Moreover $|\phi (A+B)|=|\phi (A)|+|\phi (B)|-1$ and   $|\phi (A_1+B_1-A)\cap \phi (B)|=1$, where $\phi:G\mapsto G/H$ is the canonical map.
\end{corollary}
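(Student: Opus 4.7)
The plan is to read off the corollary from Theorem \ref{n-3} applied with $\mu=1$. First I would dispense with the boundary situations $\min(|A|,|B|)\le 2$ by direct inspection (the trivial cases alluded to in the introduction). With $3\le |A|\le |B|$ in hand, applying Theorem \ref{n-3} with $\mu=1$ shows cases (i) and (iii) (both requiring $\mu=0$) do not arise, leaving only (ii) and (iv) to translate into the quasi--$H$--periodic language.

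In case (ii), $A$ and $B$ are ordinary arithmetic progressions with a common difference $r$. I would take $H:=\langle r\rangle$, $A_1:=A$, $A_0:=\emptyset$, $B_1:=B$, $B_0:=\emptyset$; these partitions are quasi--$H$--periodic by construction, $|A_1+B_1|=|A_1|+|B_1|-1$ holds because two APs of a common difference concatenate, and both ``moreover'' assertions are immediate since $\phi(A)$, $\phi(B)$ and $\phi(A+B)$ are all singletons in $G/H$.

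In case (iv) with $\mu=1$, the condition $\nu\le 1-\mu$ forces $\nu=0$, so \emph{both} $A\setminus A_u$ and $B\setminus B_t$ are $H$-periodic. Setting $A_1:=A_u$, $A_0:=A\setminus A_u$, $B_1:=B_t$, $B_0:=B\setminus B_t$ yields quasi--$H$--periodic partitions, and the identities $|A_1+B_1|=|A_1|+|B_1|-\nu-\mu=|A_1|+|B_1|-1$ and $|\phi(A+B)|=|\phi(A)|+|\phi(B)|-1$ are both already part of (iv). Writing $\phi(A_1)=\{\alpha\}$ and $\phi(B_1)=\{\beta\}$, the final assertion $|\phi(A_1+B_1-A)\cap\phi(B)|=1$ becomes $|(\alpha+\beta-\phi(A))\cap\phi(B)|=1$; under the hypothesis $\min\{|\phi(A)|,|\phi(B)|,|\phi(G)|-|\phi(A+B)|\}\ge 2$ of (iv), $\phi(A)$ and $\phi(B)$ are progressions with a common difference $d$ and with $\alpha$, $\beta$ at corresponding endpoints, so $\alpha+\beta-\phi(A)$ is the reversed $d$-progression terminating at $\beta$, whose intersection with the forward $d$-progression $\phi(B)$ is precisely $\{\beta\}$. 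The degenerate sub-cases where $\phi(A)$ or $\phi(B)$ is a singleton are trivial.

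The main obstacle is this last singleton verification: the intersection could \emph{a priori} be larger if the common difference $d$ had order so small that the quotient progression wrapped around. I would rule that out by appealing to aperiodicity of $A+B$ together with the Kneser-type bound already invoked inside Lemma \ref{modstr}, which forces $d$ to have order strictly greater than $|\phi(A+B)|$. Everything else is a mechanical rephrasing of Theorem \ref{n-3} into the Kemperman language of quasi-periodic partitions.
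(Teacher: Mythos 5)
Your overall route is the same as the paper's: the paper derives Corollary \ref{KST} in one line from Theorem \ref{n-3} (together with the reduction to $\subgp{A^*\cup B^*}=G$), and your case-by-case translation of (ii) and (iv) into quasi--$H$--periodic partitions is the right way to flesh that out; with $\mu=1$ cases (i) and (iii) indeed disappear and $\nu=0$ is forced in (iv). Two steps, however, do not go through as written. First, Theorem \ref{n-3} with $\mu=1$ only applies under $|A+B|\le |G|-3-\mu=|G|-4$, while the corollary allows $|A+B|\le |G|-2$; your preliminary reduction disposes of small $|A|,|B|$ but not of large $|A+B|$, so the cases $|A+B|\in\{|G|-3,\,|G|-2\}$ are simply outside the reach of the theorem you invoke. (The paper disclaims $|S+T|\ge |G|-2$ in its introduction, but $|A+B|=|G|-3$ falls through there as well; you should at least flag that you are inheriting this restriction rather than silently assuming the hypothesis of Theorem \ref{n-3} is met.) You also never verify the hypothesis that $A^*\cup B^*$ generates $G$, a reduction the paper makes explicitly and which is not entirely free, since restricting to $\subgp{A^*\cup B^*}$ can push $|A+B|$ above the admissible range inside that subgroup.

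Second, your verification of $|\phi(A_1+B_1-A)\cap\phi(B)|=1$ leans on $\phi(A)$ and $\phi(B)$ being $d$--progressions with $\alpha,\beta$ at corresponding endpoints. Theorem \ref{n-3}(iv) guarantees the progression structure only when $\min\{|\phi(A)|,|\phi(B)|,|\phi(G)|-|\phi(A+B)|\}\ge 2$; the sub-case you dismiss as ``degenerate'' is not only the singleton one, but also $|\phi(G)|-|\phi(A+B)|\le 1$ with $|\phi(A)|,|\phi(B)|\ge 2$, where no endpoint geometry is available. The claim is still true there, but it needs a different argument: if $\alpha+\beta=a'+b'$ with $(a',b')\ne(\alpha,\beta)$, then since $A\setminus A_1$ and $B\setminus B_1$ are $H$--periodic, the fiber of $A+B$ over $\alpha+\beta$ would contain a full $H$--coset; as every other fiber of $A+B$ is already a full coset, $A+B$ would be $H$--periodic, contradicting aperiodicity. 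Substituting this counting argument for the endpoint argument closes the gap and also handles the progression case uniformly, including your worry about the order of $d$.
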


\begin{corollary}(Grynkiewicz Structure Theorem \cite{davkem})\label{davidkk}
Let A and B be finite subsets of an abelian group $G$ such that  $|A + B| =
|A| + |B|\le |G|-3,$    $A + B$ is aperiodic.
$3\le |A|\le  |B|.$
Then one of the following holds:

\begin{itemize}
\item[(1)]$|A|=3$ and there is an $a$ such that  either $B=a+A$ or  $T=G\setminus (-a-2A)$.
\item[2)] There exist $a, b \in  G$ such that
$|(A \cup \{a\}) + (B \cup \{b\})| = |A \cup \{a\}| + |B \cup \{b\}| - 1$.
\item[3)] There exist a subgroup $H$ and  quasi--$H$--periodic partitions  $A = A_0 \cup A_1$ and $B = B_0 \cup B_1$
 such that  $|B_{1}+A_1|=|A_{1}|+ |B_1|-1.$ Moreover $|\phi (A+B)|=|\phi (A)|+|\phi (B)|-1$ and   $|\phi (A_1+B_1-A)\cap \phi (B)|=1$, where $\phi:G\mapsto G/H$ is the canonical map.
  \item [4)]  $\{A,B\}$ is a Klein pair.
\end{itemize}
\end{corollary}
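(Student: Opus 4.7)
The strategy is to apply Theorem~\ref{n-3} to $A,B$ with $\mu=0$ and dispatch each of its four alternatives to a case of the corollary. Alternative (i) is case (1) verbatim; in alternative (ii), the common--difference $(r,-1)$--progressions $A,B$ are completed by adjoining their missing elements $a,b$, so that $(A\cup\{a\})+(B\cup\{b\})$ is an arithmetic progression and $|(A\cup\{a\})+(B\cup\{b\})|=|A\cup\{a\}|+|B\cup\{b\}|-1$, giving case (2).

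In alternative (iii), a Klein pair is case (4) by definition. For the essential--pair types (i) or (ii), adjoin to $A$ and $B$ a pair of elements completing the $H$--progression at one common extremity (the missing singletons of $S_0,T_0$ in type (i); the missing elements of $S_{u-1},T_{t-1}$ in type (ii)); a coset--by--coset inspection shows that $A'+B'$ is full in every coset except the opposite extremity, which has size $1$, so $|A'+B'|=|A|+|B|+1=|A'|+|B'|-1$, again case (2).

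Alternative (iv) is the substantive case: it provides a super--atom $H$ and $H$--decompositions $A=\bigcup_{0\le i\le u}A_i$, $B=\bigcup_{0\le i\le t}B_i$ with one of $A\setminus A_u$, $B\setminus B_t$ being $H$--periodic and the other $(H,-\nu)$--periodic, and $|A_u+B_t|=|A_u|+|B_t|-\nu$ for some $\nu\in\{0,1\}$. If $\nu=0$, then $A=(A\setminus A_u)\cup A_u$ and $B=(B\setminus B_t)\cup B_t$ are quasi--$H$--periodic partitions; combined with the moreover identity $|\phi(A+B)|=|\phi(A)|+|\phi(B)|-1$ supplied by Theorem~\ref{n-3} and the uniqueness of the deficient coset $\phi(A_u)+\phi(B_t)$, this matches case (3). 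If $\nu=1$, say $B\setminus B_t$ has the hole $b^{\ast}$ in coset $B_{j_0}$ with $j_0<t$: a coset count shows that every coset of $A+B$ indexed in $[0,u+t-1]$ is full (each receiving a contribution $A_i+B_j$ with one factor a full $H$--coset from $A\setminus A_u$ or $B\setminus B_t$), while $A+b^{\ast}$ is supported in cosets $[j_0,j_0+u]\subseteq[0,u+t-1]$; consequently $A+b^{\ast}\subseteq A+B$ and $|A+(B\cup\{b^{\ast}\})|=|A|+|B|=|A|+|B\cup\{b^{\ast}\}|-1$, which is case (2) with $a\in A$ arbitrary.

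The main obstacle is case (iv) with $\nu=1$: the absorption $A+b^{\ast}\subseteq A+B$ is the crux, resting on the $H$--progression structure and on the structural constraints implicit in Theorem~\ref{n-3}, together with careful tracking of the extremal coset $(u+t)d$ where the defect $|A_u+B_t|=|A_u|+|B_t|-1$ is concentrated.
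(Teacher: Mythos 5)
Your proposal follows the paper's own route exactly: the paper proves this corollary by invoking Theorem \ref{n-3} together with precisely the two observations you elaborate, namely that near-progressions and non-Klein essential pairs can be completed by one element each to a pair with $|A'+B'|=|A'|+|B'|-1$ (case 2)), and that alternative (iv) with $\nu=1$ also lands in case 2) via the absorption $A+b^{\ast}\subseteq A+B$. Your coset-by-coset verifications of these completions (full cosets everywhere except the extremal coset $\phi(A_u)+\phi(B_t)$, and the inclusion of the cosets $[j_0,j_0+u]$ among the full ones) are correct and are exactly the details the paper leaves implicit, so in substance this is the same proof.

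One caveat, which you share with the paper's two-line sketch: in alternative (iv) with $\mu=0$ and $\nu=0$, Theorem \ref{n-3} delivers $|A_u+B_t|=|A_u|+|B_t|$, not $|A_u|+|B_t|-1$, so the quasi--$H$--periodic partitions $A=(A\setminus A_u)\cup A_u$, $B=(B\setminus B_t)\cup B_t$ do not literally satisfy the equation $|A_1+B_1|=|A_1|+|B_1|-1$ required in case 3); you assert "this matches case (3)" without checking that equation, and as written it fails. The mismatch originates in the corollary's statement (case 3) is copied from Corollary \ref{KST}, where $\mu=1$ forces $\nu=0$ and the count $|A_1+B_1|=|A_1|+|B_1|-1$ is correct); for $\mu=0$ the intended conclusion is $|A_1+B_1|=|A_1|+|B_1|$. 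So this is a defect of the target statement rather than of your strategy, but your write-up should flag it rather than claim the displayed equation holds.
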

The result follows easily from Theorem \ref{n-3} after two observations:
\begin{enumerate}
  \item Near-progressions  and   essential non Klein pairs  satisfy 1).
  \item  In Theorem \ref{n-3},(iv), with $\nu =1,$  satisfies also 1).
\end{enumerate}

 Without loss of generality we may take $\subgp{A^*\cup B^*}=G.$ Thus Theorem \ref{n-3} implies the last two results 
 and shows moreover that $\phi(A)$, $\phi(B)$ are progressions with the same difference
if $\min \{|\phi(A)|, |\phi(B)|,  |G|-|\phi(A+B)|\}\ge 2.$  This information is crucial in order to obtain
Lev's result \cite{levkemp} and
 Lev's type reconstructions for $|A + B| =
|A| + |B|.$  Another reconstruction follows directly by Theorem \ref{n-3}.

\end{document}